\documentclass[12pt,twoside]{article}
\usepackage{etex}
\usepackage[dvipsnames]{xcolor} 
\usepackage{color}
\usepackage{tcolorbox}
\usepackage{booktabs}
\usepackage{amsthm}
\usepackage{amsfonts,amssymb,amsxtra,url,float} 
\allowdisplaybreaks[4] 
\usepackage[colorlinks,
  linkcolor=magenta, %
  anchorcolor=Periwinkle,
  citecolor=red,
  urlcolor=blue
  ]{hyperref} 
\usepackage{enumitem}
\usepackage{geometry} 
\usepackage{rotating} 
\usepackage{lscape} 
\usepackage{multirow}
\usepackage{graphicx} 
\usepackage{subfigure} 
\usepackage{tikz}
\usepackage{pgfplots}
\usepackage{tikz-3dplot}
\usetikzlibrary{patterns}
\usetikzlibrary{3d,calc}
\usetikzlibrary{decorations.pathreplacing,decorations.markings}
 \tikzset{
  on each segment/.style={
    decorate,
    decoration={
      show path construction,
      moveto code={},
      lineto code={
        \path [#1]
        (\tikzinputsegmentfirst) -- (\tikzinputsegmentlast);
      },
      curveto code={
        \path [#1] (\tikzinputsegmentfirst)
        .. controls
        (\tikzinputsegmentsupporta) and (\tikzinputsegmentsupportb)
        ..
        (\tikzinputsegmentlast);
      },
      closepath code={
        \path [#1]
        (\tikzinputsegmentfirst) -- (\tikzinputsegmentlast);
      },
    },
  },
  mid arrow/.style={postaction={decorate,decoration={
        markings,
        mark=at position 0.6 with {\arrow[#1]{stealth}} 
      }}},
}
\usetikzlibrary{arrows}
\usetikzlibrary{trees}

\usetikzlibrary{matrix}
\usetikzlibrary{patterns}
\usetikzlibrary{shadings} 
\usepackage{fancyhdr} 
\def\headertitle{Kleisli convolution representations and a Margolis--Sakurai version of MIP}
\def\fstpage{1} 
\def\page{$\begin{matrix} {\color{white}0} \\ \thepage \end{matrix}$} 

\usepackage[all]{xy} 
\usepackage{dsfont} 
\usepackage{cite}
\usepackage{mathrsfs} 
\numberwithin{figure}{section}
\usepackage{marginnote} 
\usepackage{graphicx} 
\usepackage{multicol} 

\usepackage{enumitem}
\setenumerate[1]{itemsep=0pt,partopsep=0pt,parsep=\parskip,topsep=3pt}
\setitemize[1]{itemsep=0pt,partopsep=0pt,parsep=\parskip,topsep=3pt}
\setdescription{itemsep=0pt,partopsep=0pt,parsep=\parskip,topsep=3pt}
\setlist[itemize]{leftmargin=35pt}
\setlist[enumerate]{leftmargin=35pt}
\usepackage{changepage} 
\newcommand{\checks}[1]{{\color{black}{#1}}} 

\usepackage{contour} 
\usepackage{xcolor}
\contourlength{0.03em}
\contournumber{auto}

\newtheorem{theoremAlph}{Theorem}

\newtheorem{theorem}{Theorem}[section]
\newtheorem{lemma}[theorem]{Lemma}
\newtheorem{corollary}[theorem]{Corollary}
\newtheorem{main theorem}[theorem]{Main Theorem}
\newtheorem{proposition}[theorem]{Proposition}
\newtheorem{definition}[theorem]{Definition}

\newtheorem{remark}[theorem]{Remark}
\newtheorem{example}[theorem]{Example}

\newtheorem{question}[theorem]{Question}
\newtheorem{fact}[theorem]{Fact}

\numberwithin{equation}{section}

\def\orcid{
\begin{tikzpicture}[baseline=-1mm]
\filldraw[Green!35] (0,0) circle (5pt);
\filldraw[white] (0,0) node{\tiny\textbf{iD}};
\end{tikzpicture}
}

\def\=<{\leqslant} 
\def\>={\geqslant} 
\def\NN{\mathbb{N}} 
\def\ZZ{\mathbb{Z}} 

\newcommand{\modcat}{\mathsf{mod}}

\newcommand{\kk}{\mathds{k}} 
\newcommand{\Hom}{\mathrm{Hom}} %
\newcommand{\End}{\mathrm{End}} %
\newcommand{\Aut}{\mathrm{Aut}} %
\newcommand{\compos}{\begin{smallmatrix}\circ\end{smallmatrix}}

\usepackage{extarrows}
\def\To{\xlongrightarrow}

\def\calC{\mathcal{C}}
\def\calD{\mathcal{D}}

\def\calP{\mathcal{P}}

\def\calT{\mathcal{T}}

\def\EE{\mathbb{E}}
\def\FF{\mathbb{F}}

\def\orcid{
\begin{tikzpicture}[baseline=-1mm]
\filldraw[Green!35] (0,0) circle (5pt);
\filldraw[white] (0,0) node{\tiny\textbf{iD}};
\end{tikzpicture}
}

\title{\bf
Kleisli convolution representations and the Margolis--Sakurai version of the modular isomorphism problem
}

\vspace{5mm}

\author{
Yu-Zhe Liu$^{\ref{Author1}, \ref{Author2}, \href{https://orcid.org/0009-0005-1110-386X}{\orcid}\ref{orcid1}}$
}
\date{ }
\begin{document}






\maketitle

\begin{enumerate}[label=\textbf{\color{red}\arabic*}] \footnotesize
  \item
    \begin{center}
      School of Mathematics and Statistics,
      Guizhou University, Guiyang 550025, Guizhou, China;
    \end{center}
    \label{Author1}

  \item
    \begin{center}
      Guizhou Provincial Key Laboratory of Applied Mathematics and Computing Power \& Algorithms,
      Guiyang, 550025, Guizhou, China;
    \end{center}
    \label{Author2}
%
%

  \item[]
    \begin{center}
      E-mail:
      \url{liuyz@gzu.edu.cn} / \url{yzliu3@163.com}.
    \end{center}
\end{enumerate}
\vspace{1mm}
\begin{enumerate}[label=\textbf{\color{red}$\ddag$}]
  \item \footnotesize
    \begin{center}
      Corresponding author
    \end{center} \label{CorrespondingAuthor}
\end{enumerate}
\vspace{1mm}
\begin{enumerate}[leftmargin=6.5cm] \footnotesize
  \item[\orcid]
      ORCID: \href{https://orcid.org/0009-0005-1110-386X}{0009-0005-1110-386X}
      \label{orcid1} 
\end{enumerate}

\hspace{2mm}



\begin{adjustwidth}{1cm}{1cm} 
\noindent \footnotesize 
\textbf{Abstract.}
We introduce Kleisli convolution representations for groups, rings, and algebras. We show that their representation categories are equivalent to the usual ones for groups and finite-dimensional algebras, but for rings only recover modules whose underlying Abelian groups are free of finite rank. 
We also apply the Kleisli convolution representation to provide a partial answer to the Margolis--Sakurai version of the modular isomorphism problem:
If $G$ is a finite $p$-group with $D_3(G)=1$ and $\gcd(m,c_G!)=1$, then $\mathbb F_{p^m}G\cong\mathbb F_{p^m}H$ implies $G\cong H$, where $c_G$ is the number of geometric connected components of $\mathrm{Aut}(\overline{\mathbb F}_pG)$.

\noindent\textbf{Key words.}
Monoidal monad; free semimodule; powerset monad;
Eilenberg--Moore category; module category

\noindent\textbf{2010 Mathematics Subject Classification.}
18C20 
18D10 
16G10 
16D10 
20C15 

\end{adjustwidth}

%
%
%


\newpage


\setcounter{tocdepth}{2}
\setcounter{secnumdepth}{3}
\tableofcontents


\def\gldim{\mathrm{gl.dim}}
\def\perm{\mathrm{perm}}
\def\forb{\mathrm{forb}}
\def\rmv{\mathsf{v}}
\def\rmp{\mathsf{w}}

\newpage

\section{Introduction}

\def\id{\mathrm{id}}
\def\bfone{\mathbf{1}}
\def\stmap{\mathrm{st}}
\def\Kl{\mathsf{Kl}}
\def\scrT{\mathscr{T}}
\def\scrM{\mathscr{M}}
\def\scrP{\mathscr{P}}
\def\semimod{\mathcal{M}}

\def\Set{\mathsf{Set}}
\def\FinSet{\mathsf{FinSet}}
\def\Ab{\mathsf{Ab}}
\def\Mod{\mathsf{Mod}} \def\Modcat{\mathsf{Mod}}
\def\Mat{\mathsf{Mat}}
\def\rep{\mathsf{rep}}
\def\Pplus{\mathcal{P}_+}
\def\KCrep{\mathsf{Klcrep}}
\def\Perm{\mathsf{Perm}}
\def\supp{\mathrm{supp}}
\def\Klcorresp{\mathcal{K}\!\ell}
\def\KlHom{\mathds{H}}

\newcommand{\defines}[1]{{\it\color{violet}#1}}

%

\subsection{Motivations}

\paragraph{Kleisli convolution representations of power monoids}
Representation theory is one of the basic branches of algebra.
It studies algebraic systems by describing their elements as transformations of sets, Abelian groups or vector spaces.
For example, a permutation representation of a group $G$ is given by a group homomorphism from $G$ to a symmetric group,
while a linear representation of $G$ over a field $\kk$ is given by a group homomorphism from $G$ to a general linear group.
Similarly, a left module over a ring $R$ can be described by a ring homomorphism from $R$ to an endomorphism ring.
After fixing a basis, the endomorphisms of a finite-dimensional vector space can be represented by matrices.
We refer to \cite{BiggsWhite1979,EGHLSVY2011,Lam,ARS} for the basic theory of these representations.


The notion of a Kleisli category was introduced by Kleisli in \cite{Kleisli}.
Every monad, written as $\scrT=(T,\eta,\mu)$, on a category $\calC$ gives a Kleisli category $\Kl(\scrT)$
whose objects are the objects of $\calC$ and whose morphisms from $X$ to $Y$
are the morphisms from $X$ to $T(Y)$ in $\calC$ \cite{Kleisli, Mac1998, Manes1976, Riehl}.
Such a morphism is called a Kleisli arrow.
If $\scrT$ is a monoidal monad on a monoidal category $\calC$,
then the monoidal structure of $\calC$ induces a monoidal structure on $\Kl(\scrT)$ \cite{Kock1972, Zaw2012}.
On the other hand, for a comonoid object $X$ and a monoid object $Y$ in a monoidal category,
the Hom-space from $X$ to $Y$ is a monoid under the usual convolution given in \cite[Definition 1.13]{AM2010}.
Combining these two constructions, we obtain a convolution on $\Hom_{\Kl(\scrT)}(X,Y)$
whose multiplication involves both the tensor product of Kleisli arrows and the Kleisli composition.
We call this multiplication the Kleisli convolution.
Its precise construction is given in Theorem \ref{thm:Kl convol-semigp}.

The motivation of the present paper comes from the previous work in \cite{WHL2026}.
In that paper, the authors observed that Kleisli categories can also be used to describe power semigroups by Kleisli convolution,
and showed that power semigroups of groups and reduced finitary power monoids can be realized as convolution monoids in Kleisli categories of powerset monads.
This naturally leads us to ask the following question.

\begin{question}
Can the Kleisli convolution representation be used to give representations of other algebraic systems, such as groups, rings and algebras?
\end{question}

\noindent
To carry out this idea, let $S$ be a commutative semiring and let $\scrM_S$ be the free $S$-semimodule monad on $\Set$.
This monad and its relation with matrix categories are well-known, see \cite[Sections 5.1 and 6.1]{CJ2013}.
In particular, when restricted to finite sets,
the Kleisli category $\Kl(\scrM_S)$ can be identified with the category $\Mat_S$ of matrices over $S$.
Proposition \ref{prop:matrix-convolution} gives a bijection
$\Phi_X: \Hom_{\Kl(\scrM_S)}(\bfone,E_X) \xrightarrow{\cong} \Mat_X(S)$
which implies that $\Hom_{\Kl(\scrM_S)}(\bfone,E_X)$ is an $S$-algebra.
This construction allows us to define Kleisli convolution representations of groups, rings and algebras,
see Definitions \ref{def:Kl convol rep gp}, \ref{def:Kl convol rep ring}, \ref{def:Kl convol rep algebra}.

\paragraph{Modular isomorphism problem}

Another motivation of this paper is the modular isomorphism problem (MIP) and its dependence on the coefficient field.
To be more precise, the following question.

\begin{question}\label{quest:MIP}
For finite $p$-groups $G$ and $H$, and a field $\FF$ of characteristic $p$,
MIP asks whether an isomorphism $\FF G\cong\FF H$ of
$\FF$-algebras implies $G\cong H$.
\end{question}

For the history and methods of this problem, we refer to \cite{Margolis2022}.
Although many positive results have been obtained, the problem has a negative answer in general.
Garc\'ia-Lucas, Margolis and del R\'io constructed non-isomorphic finite $2$-groups whose group algebras are isomorphic over every field of characteristic $2$ \cite{GLMR2022}.
These examples show that restrictions on the groups are necessary.
They do not, however, answer whether a positive result for a fixed group over the prime field remains valid
over every field of the same characteristic, see \cite{MS2025}.

One classical method to the problem is to recover information about dimension subgroups from the group algebra.
In 1972, Passi and Sehgal proved that $\FF_pG$ determines the isomorphism type of $D_n(G)/D_{n+2}(G)$ for every positive integer $n$,
where $D_n(G)$ denotes the $n$-th dimension subgroup, i.e.,
\[ D_n(G):=\{g\in G:g-1\in I_G^n\} =G\cap(1+I_G^n),\]
where $I_G:= \big\{\sum\limits_{g\in G}a_gg\in\FF_pG: \sum\limits_{g\in G}a_g=0\big\}$
is an ideal of $\FF_p G$, see \cite{PSl1972}.
Taking $n=1$, their result shows that a finite $p$-group with $D_3(G)=1$ is determined by its group algebra over $\FF_p$.
This result was later strengthened to the determination of $D_n(G)/D_{2n+1}(G)$ over $\FF_p$, see \cite{Furukawa1981,RitterSehgal1983}.
However, these results over the prime field do not automatically give the same conclusions over ``larger'' fields.
As explained in \cite[Section 2.2]{MS2025}, the arguments use properties of coefficients in $\FF_p$
which are not available for arbitrary coefficients.
This leads Margolis and Sakurai to ask:

\begin{question}[{\!\cite[Question 2.10]{MS2025}}] \label{quest:MIP MS2025}
Whether $\FF G\cong\FF H$ implies $G\cong H$ for every field $\FF$ of characteristic $p$ when $D_3(G)=1$.
\end{question}

The role of finite extensions is clarified by a reduction theorem of Garc\'ia-Lucas and del R\'io \cite{GLR2024}.
They proved that an isomorphism of group algebras over a field implies an isomorphism over some finite extension of its prime field.
In particular, for the MIP, it is enough to consider finite coefficient fields.
This reduction does not assert that an isomorphism over a finite extension descends to the prime field.
Motivated by this remaining difficulty, we realize group algebras as regular Kleisli convolution subalgebras
and apply Lang's theorem \cite{Lang1956} to obtain a sufficient condition for such descent.
Combining this condition with the result of Passi and Sehgal, we obtain a positive answer to Question \ref{quest:MIP MS2025} for finite extensions $\FF_{p^m}$ satisfying $\gcd(m,c_G!)=1$.
Here $c_G$ is the number of geometric connected components of the automorphism algebraic group of $\FF_pG$.
The allowed extension degrees therefore depend on $G$.

\subsection{Outline}

The theoretical outline and the main results of this paper are as follows.

First, we consider the Kleisli convolution in a general Kleisli category.
Let $\scrT=(T,\eta,\mu)$ be a monoidal monad on a monoidal category $\calC$.
Then, for a comonoid object $X$ and a monoid object $Y$ in $\Kl(\scrT)$,
the Kleisli Hom-space $\Hom_{\Kl(\scrT)}(X,Y)$ is a monoid under the Kleisli convolution.
This result provides the categorical construction used throughout this paper
(note that this fact has been proved in \cite{Zaw2012}, but no Kleisli category was used as a descriptive tool).

Second, we apply the powerset monad and the free $\kk$-module monad to representations of groups.
We first define finite Kleisli convolution representations by using the powerset monad.
We then define their linear version by using the free $\kk$-module monad.
Their relations with the usual representation categories are given by the following theorem.

\begin{theoremAlph}\label{thm:main-group-KCrep}
Let $G$ be a group and let $\kk$ be a field. The following statements
hold.
\begin{enumerate}[label={\rm(\arabic*)}]
  \item {\rm(Theorems \ref{thm:KLcrep gp} and \ref{thm:KCrep-perm rep})}
    Finite Kleisli convolution representations of $G$ with respect to the powerset monad form a category $\KCrep(G)$,
    and there is an equivalence
    \[\KCrep(G)\simeq\Perm(G). \]
    where $\Perm(G)$ is the category of finite permutation representations of $G$.

  \item {\rm(Theorems \ref{thm:linear KLcrep gp} and \ref{theo:KC-linear})}
    Linear Kleisli convolution representations of $G$ over $\kk$ form a category $\KCrep_{\kk}(G)$,
    and there is a $\kk$-linear equivalence
    \[ \KCrep_{\kk}(G)\simeq\rep_{\kk}(G).\]
\end{enumerate}
\end{theoremAlph}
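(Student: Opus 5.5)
We have to guess the definitions. For the powerset monad $\scrP$ on $\Set$ (restricted to finite sets), a Kleisli arrow $\bfone\to E_X$, with $E_X = X\times X$ the monoid object of "matrix units", is a subset of $X\times X$, i.e. a relation on $X$. Kleisli convolution of two relations is then relational composition, which is Boolean matrix multiplication. So $\Hom_{\Kl(\scrP)}(\bfone,E_X)$ is the monoid of binary relations on $X$. In the linear case, with $\scrM_\kk$ the free $\kk$-module monad, Proposition \ref{prop:matrix-convolution} identifies $\Hom_{\Kl(\scrM_\kk)}(\bfone,E_X)$ with the matrix algebra $\Mat_X(\kk)$.

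I expect Definition \ref{def:Kl convol rep gp} to say the following: a Kleisli convolution representation of $G$ is a finite set $X$ together with a monoid homomorphism $\rho: G\to (\Hom_{\Kl(\scrT)}(\bfone,E_X),\ast)$. A morphism $(X,\rho)\to(Y,\sigma)$ is a Kleisli arrow $X\to Y$ intertwining $\rho$ and $\sigma$. I would first verify that these form a category (Theorems \ref{thm:KLcrep gp} and \ref{thm:linear KLcrep gp}). This amounts to checking that intertwiners are closed under Kleisli composition and contain the Kleisli identities $\eta_X$, which follows from associativity of Kleisli composition. The key fact is that Kleisli convolution with a Kleisli arrow is compatible with composition. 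This compatibility should come from Theorem \ref{thm:Kl convol-semigp} together with naturality of the strength.

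For part (1), let $\rho(g)$ be a relation on $X$. Since $G$ is a group and $\rho$ is a monoid homomorphism, $\rho(g)$ is invertible in the relation monoid with inverse $\rho(g^{-1})$. The main lemma is that the invertible relations on a finite set are exactly the graphs of permutations. To prove it, suppose $R\circ S=\Delta=S\circ R$. If $x R y$ and $x R y'$, then from $y S x''$ forcing $x''=y$ one derives $y=y'$; hence $R$ is a total function. By symmetry, the inverse relation is a function as well, so $R$ is a bijection. Consequently $\rho$ factors through $\mathfrak S_X$, and this gives a permutation representation. Conversely, the graph of a permutation action defines a Kleisli convolution representation, and the two assignments are mutually inverse on objects. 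For morphisms, I would need the Kleisli arrows of $\KCrep(G)$ to correspond to those of $\Perm(G)$.

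This morphism step is the main obstacle. If $\Perm(G)$ uses $G$-equivariant maps, while Kleisli arrows are relations, then equivariant relations are more numerous than equivariant maps. In that case the equivalence must come from the definition of $\KCrep(G)$ restricting morphisms suitably, for instance to those of the form $\eta_Y\circ f$ (the "deterministic" arrows), or else $\Perm(G)$ has equivariant relations as morphisms. I would follow whichever convention the definitions fix, and show the functor is full, faithful and bijective on objects, up to choosing $X$.

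For part (2), the same dictionary via $\Phi_X$ turns $\rho$ into a group homomorphism $G\to\Mat_X(\kk)^\times=\mathrm{GL}_X(\kk)$. Here invertibility is automatic from $\rho(g)\ast\rho(g^{-1})=\rho(1)=$ identity, and the latter holds by unitality. This yields a representation on the space $\kk^X$. Kleisli arrows $X\to Y$ are $\kk$-matrices, i.e. linear maps $\kk^X\to\kk^Y$, and the intertwining condition becomes $G$-linearity. This gives a fully faithful $\kk$-linear functor $\KCrep_\kk(G)\to\rep_\kk(G)$. It is essentially surjective because every finite-dimensional representation $V$ is isomorphic to one on $\kk^X$ once a basis $X$ of $V$ is chosen. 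Linearity of the functor follows because the $\kk$-module structure on Kleisli Hom-sets is pointwise, matching the matrix addition.
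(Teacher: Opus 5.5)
Your proposal is correct and follows essentially the paper's route. The paper identifies $\KlHom_{\scrP}(X)$ with relations on $X$ under relational composition and $\KlHom_{\kk}(X)$ with $\Mat_X(\kk)$, then uses that the units of the relation monoid are exactly graphs of permutations (Lemma~\ref{lemm:units-power-convolution}, proved by the single-valuedness argument you sketch), and in (2) gets full faithfulness from matrices and essential surjectivity by choosing a basis. Your hedge on morphisms in (1) resolves as you expected: the paper defines morphisms in $\KCrep(G)$ as set maps $f\colon X\to Y$ with $f\circ\sigma_{X,g}=\sigma_{Y,g}\circ f$ (your ``deterministic'' arrows), so the functor to $\Perm(G)$ is the identity on morphisms; in (2) it uses unital algebra maps $\kk G\to\KlHom_{\kk}(X)$, which is equivalent to your monoid-homomorphism formulation.
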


\noindent
Therefore, the Kleisli convolution construction provides categorical descriptions of
both finite permutation representations and finite-dimensional linear representations of groups.

Third, we consider an arbitrary associative ring $R$ with identity.
In this case, we apply the free Abelian group monad.
Since every object obtained from this monad \checks{by applying it to a finite set} has an underlying Abelian group which is free of finite rank, a freeness restriction appears.
The main results for rings are as follows.

\begin{theoremAlph}\label{thm:main-ring-KCrep}
Let $R$ be an associative ring with identity. The following statements hold.
\begin{enumerate}[label={\rm(\arabic*)}]
  \item {\rm(Theorems \ref{thm:ring-KCrep-category} and
  \ref{thm:ring-free})}
  Kleisli convolution representations of $R$ form an additive category $\KCrep_{\ZZ}(R)$,
  and there is an additive equivalence
  \[ \KCrep_{\ZZ}(R) \simeq R\text{-}\mathsf{free}_{\ZZ}, \]
  where $R\text{-}\mathsf{free}_{\ZZ}$ is the full subcategory of finitely generated left $R$-modules whose underlying Abelian groups are free of finite rank.

  \item {\rm(Corollary \ref{coro:em-ring})}
  Let $\scrT_R=(R\otimes_{\ZZ}-,\eta,\mu)$ be the corresponding monad on $\Ab$
  and $\Ab^{\scrT_R}$ be the Eilenberg--Moore category induced by $\scrT_R$.
  Then there is a fully faithful functor
  \[ \KCrep_{\ZZ}(R) \to \Ab^{\scrT_R} ~ (\simeq R\text{-}\Mod). \]
\end{enumerate}
\end{theoremAlph}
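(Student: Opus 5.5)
The plan is to make the definition of a Kleisli convolution representation of $R$ concrete first. We work with the free Abelian group monad $\scrM_{\ZZ}$ restricted to finite sets, so $\Kl(\scrM_{\ZZ})$ is identified with $\Mat_{\ZZ}$. By Proposition \ref{prop:matrix-convolution}, the bijection $\Phi_X:\Hom_{\Kl(\scrM_{\ZZ})}(\bfone,E_X)\to\Mat_X(\ZZ)$ is a ring isomorphism, with Kleisli convolution on the left and matrix multiplication on the right. Hence a Kleisli convolution representation of $R$ on a finite set $X$ is the same thing as a unital ring homomorphism
\[ \rho:R\to \Hom_{\Kl(\scrM_{\ZZ})}(\bfone,E_X)\cong \Mat_X(\ZZ)\cong \End_{\ZZ}(\ZZ^{(X)}). \]
Morphisms $(X,\rho)\to(Y,\rho')$ are Kleisli arrows $X\to Y$, i.e. $Y\times X$ integer matrices, that intertwine $\rho$ and $\rho'$. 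The first step is to check that this category is additive. We define the zero object as the representation on $X=\emptyset$ and the biproduct as the representation on $X\sqcup Y$ by block-diagonal matrices. Hom-sets are subgroups of matrix groups and are closed under addition, because the intertwining condition is linear. Composition is bilinear since it is matrix multiplication.

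Next we construct the functor $F:\KCrep_{\ZZ}(R)\to R\text{-}\mathsf{free}_{\ZZ}$. It sends $(X,\rho)$ to $\ZZ^{(X)}$ with $r\cdot v=\rho(r)v$, and it sends an intertwining matrix to the $\ZZ$-linear map it defines, which is $R$-linear precisely because of the intertwining condition. Functoriality and additivity follow because Kleisli composition in $\Kl(\scrM_{\ZZ})$ corresponds to matrix product. $F$ is fully faithful because $\Hom_{\ZZ}(\ZZ^{(X)},\ZZ^{(Y)})\cong\Mat_{Y\times X}(\ZZ)$, and under this bijection $R$-linearity is equivalent to intertwining. For essential surjectivity, let $M$ be a module whose underlying group is free of finite rank $n$. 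We choose a $\ZZ$-basis indexed by a finite set $X$ with $|X|=n$. Transporting the action $R\to\End_{\ZZ}(M)\cong\Mat_X(\ZZ)$ back along $\Phi_X$ gives a representation $\rho$ with $F(X,\rho)\cong M$. The finite-generation condition over $R$ holds automatically, since the module is finitely generated over $\ZZ$. This proves part (1).

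For part (2), we compose $F$ with the fully faithful inclusion $R\text{-}\mathsf{free}_{\ZZ}\hookrightarrow R\text{-}\Mod$. We then need the standard equivalence $\Ab^{\scrT_R}\simeq R\text{-}\Mod$. Given an algebra $a:R\otimes_{\ZZ}A\to A$, the unit and associativity axioms of an Eilenberg--Moore algebra are exactly the unit and associativity axioms of a module action, and algebra morphisms are exactly $R$-linear maps. A composite of fully faithful functors is fully faithful, which gives the corollary.

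The main obstacle is the bookkeeping in the first step: verifying that the intertwining condition, stated via Kleisli composition and convolution, really becomes the matrix identity $\phi\,\rho(r)=\rho'(r)\,\phi$. This requires tracking conventions for the direction of Kleisli arrows and the ordering of indices in $\Phi_X$. A further delicate point is making sure unitality is imposed in the definition, so that $\rho(1)$ is the identity matrix. Otherwise the image would only be a summand and essential surjectivity onto free modules could fail.
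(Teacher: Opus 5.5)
Your proposal is correct and follows essentially the same route as the paper. The additive structure comes from the zero object on the empty set, block-diagonal biproducts on $X\sqcup Y$, and bilinearity of matrix multiplication. The equivalence uses the functor $(X,\varrho)\mapsto \ZZ^{(X)}$ with matrix action: it is fully faithful because integer matrices correspond to $\ZZ$-linear maps between free groups, and essentially surjective by choosing a $\ZZ$-basis. Part (2) follows by composing with the inclusion into $R\text{-}\Mod$ and the equivalence $\Ab^{\scrT_R}\simeq R\text{-}\Mod$.

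The bookkeeping you flag as the main obstacle does not arise, because the paper defines morphisms directly by the matrix identity $\Phi_Y(\varrho_Y(r))[f]=[f]\Phi_X(\varrho_X(r))$ and builds unitality into the definition.
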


\noindent
Theorem \ref{thm:main-ring-KCrep} also reveals a surprising difference among groups, rings and algebras. Recall that every ring is an Abelian group with respect to addition, and every $\kk$-algebra is in particular a ring. Thus, one may expect that their Kleisli convolution representation categories behave in a similar way.
However, \textbf{the Kleisli convolution representation categories of groups and finite-dimensional $\kk$-algebras are equivalent to their usual representation categories, while $\KCrep_{\ZZ}(R)$ is generally not equivalent to $R\text{-}\Mod$}.
It only recovers the full subcategory $R\text{-}\mathsf{free}_{\ZZ}$. This unexpected difference is one of the main observations of this paper.

Fourth, we consider a finite-dimensional algebra $A$ over a field $\kk$.
Since every finite-dimensional $\kk$-vector space is free,
the restriction appearing in Theorem \ref{thm:main-ring-KCrep} does not occur.
Therefore, the whole finite-dimensional representation category can be recovered.

\begin{theoremAlph}[{\rm Theorems \ref{thm:algebra-KCrep-category}, \ref{thm:algebra-equivalence}}]
\label{thm:main-algebra-KCrep}
Let $A$ be a finite-dimensional $\kk$-algebra. Then Kleisli convolution representations of $A$ form a $\kk$-linear category $\KCrep(A)$, and we have a $\kk$-linear equivalence
\[ \KCrep(A)\simeq\rep(A).\]
\end{theoremAlph}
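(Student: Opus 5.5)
We have to guess the definition of $\KCrep(A)$: presumably an object is a pair $(X,\rho)$ with $X$ a finite set and $\rho: A\to \Hom_{\Kl(\scrM_\kk)}(\bfone,E_X)$ a $\kk$-algebra homomorphism (unital, for the Kleisli convolution). Here $E_X$ is the monoid object $X\times X$. A morphism $(X,\rho)\to(Y,\sigma)$ is presumably a Kleisli arrow $f\in\Hom_{\Kl(\scrM_\kk)}(X,Y)$, i.e. a $Y\times X$ matrix, that intertwines $\rho$ and $\sigma$. The plan has two parts, following the pattern of the linear group case: first show $\KCrep(A)$ is a $\kk$-linear category, then build an explicit equivalence with $\rep(A)$ through the matrix identification $\Phi_X$ of Proposition \ref{prop:matrix-convolution}.

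For the category structure:
\begin{itemize}
\item Composition is Kleisli composition. Since $\Kl(\scrM_\kk)$ restricted to finite sets is $\Mat_\kk$, composition is matrix multiplication.
\item Identities are the units $\eta_X$.
\item The intertwining condition is preserved under composition because Kleisli composition is associative. Under $\Phi$ it reads $f\,\Phi_X(\rho(a))=\Phi_Y(\sigma(a))\,f$ for all $a\in A$.
\item Hom-sets are $\kk$-subspaces of $\Hom_{\Kl(\scrM_\kk)}(X,Y)\cong \Mat_{Y\times X}(\kk)$, since the condition is linear in $f$.
\item Composition is bilinear because matrix multiplication is.
\item I expect to also record that finite direct sums exist, using the disjoint union $X\sqcup Y$ with block-diagonal $\rho\oplus\sigma$. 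Strictly, equivalence with $\rep(A)$ does not need this.
\end{itemize}

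Define $F:\KCrep(A)\to\rep(A)$ by $F(X,\rho)=\kk^X$ with $a\cdot v=\Phi_X(\rho(a))v$. Since $\Phi_X$ is a $\kk$-algebra isomorphism and $\rho$ is an algebra map, the composite $A\to \Mat_X(\kk)\cong\End_\kk(\kk^X)$ is a unital algebra homomorphism, so $\kk^X$ is a finite-dimensional $A$-module. On morphisms, $F(f)$ is the linear map $\kk^X\to\kk^Y$ given by the matrix of $f$. The intertwining condition is exactly $A$-linearity. Functoriality holds because Kleisli composition corresponds to matrix product, and $\kk$-linearity is clear.

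The functor is fully faithful because every $A$-linear map $\kk^X\to\kk^Y$ is a matrix commuting with the actions. That matrix is a unique Kleisli arrow, since $\Hom_{\Kl}(X,Y)=\Hom_\Set(X,\kk^{(Y)})\cong \Mat_{Y\times X}(\kk)$ for finite $X$.

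For essential surjectivity, let $M\in\rep(A)$ with $\dim M=n$. Choose a basis, identifying $M\cong\kk^{X}$ with $X=\{1,\dots,n\}$; this is where finite-dimensionality (freeness over $\kk$) is used, in contrast with the ring case. The action gives an algebra map $A\to\End_\kk(\kk^X)\cong \Mat_X(\kk)$, and we set $\rho=\Phi_X^{-1}\circ(\text{action})$. Then $F(X,\rho)\cong M$. The case $M=0$ uses $X=\emptyset$, which requires the convention $\Mat_\emptyset(\kk)=0$ to be allowed.

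The main obstacle I expect is the bookkeeping in the first step: checking that under $\Phi_X$, the Kleisli convolution and unit on $\Hom_{\Kl}(\bfone,E_X)$ match matrix multiplication and the identity matrix. The same check is needed for the compatibility of composition with $\Phi$, including the order and transposition conventions for indices in $E_X=X\times X$. Once Proposition \ref{prop:matrix-convolution} is read as an algebra isomorphism with the correct orientation, everything else is formal.
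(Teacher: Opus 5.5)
Your proposal is correct and follows essentially the same route as the paper, which likewise checks the $\kk$-linear category axioms through the matrix description of Kleisli composition. It then defines $F_A(X,\theta)=\semimod_{\kk}(X)$ with $a\cdot v=\Phi_X(\theta(a))v$, obtains full faithfulness from the bijection between Kleisli arrows and matrices, and proves essential surjectivity by choosing a basis and setting $\theta_M=\Phi_X^{-1}\circ\sigma_M$; your guessed definitions match the paper's, and your extra remarks on direct sums and the case $X=\emptyset$ are harmless additions.
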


We also study some special Kleisli convolution representations.
For a ring $R$, Proposition \ref{prop:ring-flat} describes flat Kleisli convolution representations;
Propositions \ref{prop:ring-injective-zero} and \ref{prop:ring-simple-zero} show that there are no nonzero injective Kleisli convolution representations and no simple Kleisli convolution representations of $R$.
For a finite-dimensional algebra $A$, Propositions \ref{prop:algebra-projective}, \ref{prop:algebra-injective} and \ref{prop:algebra-simple} show that projective, injective and simple Kleisli convolution representations correspond to projective, injective and simple finite-dimensional $A$-modules, respectively.

The first purpose of this paper is to provide a common Kleisli convolution description for these familiar representation categories.
Theorems \ref{thm:main-group-KCrep}, \ref{thm:main-ring-KCrep} and \ref{thm:main-algebra-KCrep} also show the scope of this description.
Over a field, finite-dimensional representations can be completely recovered from the matrix Kleisli category. For a general ring, only the modules whose underlying Abelian groups are free of finite rank are obtained, and the Eilenberg--Moore category is needed in order to include all modules.

The second purpose of this paper is to study MIP.
We give a partial answer to Question \ref{quest:MIP MS2025}.
For a finite $p$-group $G$ with $D_3(G)=1$, we show that $\FF_{p^m}G\cong\FF_{p^m}H$ implies $G\cong H$ whenever $\gcd(m,c_G!)=1$.
This is proved by realizing $\FF_pG$ as a regular Kleisli convolution subalgebra and applying Lang's theorem.
See the following result.

\begin{theoremAlph}[{Theorem \ref{coro:third dimension finite extensions}}]
Let $G$ be a finite $p$-group with $D_3(G)=1$.
Let $m$ be a positive integer such that $\gcd(m,c_G!)=1$.
Then, for every finite $p$-group $H$,
\[  \FF_{p^m}G\cong\FF_{p^m}H \text{~implies~} G\cong H. \]
\end{theoremAlph}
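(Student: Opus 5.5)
The plan is to descend the isomorphism $\FF_{p^m}G\cong\FF_{p^m}H$ to an isomorphism $\FF_pG\cong\FF_pH$ using Lang's theorem, and then apply Passi--Sehgal. The descent uses Galois cohomology of the automorphism group scheme of $\FF_pG$.

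\textbf{Setup.} Put $A=\FF_pG$ and $B=\FF_pH$. Since $\FF_{p^m}G\cong\FF_{p^m}H$, we have $|G|=|H|$. Realize both algebras as regular Kleisli convolution subalgebras of $\Hom_{\Kl(\scrM_{\FF_p})}(\bfone,E_X)\cong\Mat_X(\FF_p)$ via Proposition \ref{prop:matrix-convolution}, with $X$ a set of size $|G|$. This exhibits $\mathbf{Aut}(A)$ as a closed subgroup scheme of $\mathrm{GL}_X$ defined over $\FF_p$. It is a linear algebraic group $\mathcal G$ with identity component $\mathcal G^\circ$ and finite component group $\pi_0=\mathcal G/\mathcal G^\circ$ of order $c_G$, on which $\mathrm{Gal}(\overline{\FF}_p/\FF_p)$ acts.

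\textbf{Forms and cohomology.} $B$ is an $\FF_p$-form of $A$ that becomes isomorphic to $A$ over $\FF_{p^m}$. Hence its isomorphism class is a class in $H^1(\mathrm{Gal}(\FF_{p^m}/\FF_p),\mathcal G(\FF_{p^m}))\subseteq H^1(\FF_p,\mathcal G)$. Concretely, choose $\phi:A_{\FF_{p^m}}\to B_{\FF_{p^m}}$ and form the cocycle $a_\sigma=\phi^{-1}\,{}^\sigma\phi$, where $\sigma$ is Frobenius. This cocycle is determined by $a:=a_\sigma\in\mathcal G(\FF_{p^m})$ and satisfies the norm condition $a\,\sigma(a)\cdots\sigma^{m-1}(a)=1$.

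Apply Lang's theorem (\cite{Lang1956}). For a connected group, $H^1(\FF_q,\mathcal G^\circ)=1$, and for any smooth $\mathcal G$ the map $H^1(\FF_p,\mathcal G)\to H^1(\FF_p,\pi_0)$ is injective. So it suffices to show that the image $\bar a$ of the class in $H^1(\FF_p,\pi_0)$ is trivial. Since we are working over a finite field, this $H^1$ is the set of $\sigma$-conjugacy classes in the finite group $\pi_0(\overline{\FF}_p)$. The class comes from the extension of degree $m$, so $\bar a$ satisfies $\bar a\,\sigma(\bar a)\cdots\sigma^{m-1}(\bar a)=1$.

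\textbf{The key step.} We must show that a twisted cocycle of "order dividing $m$" is trivial whenever $\gcd(m,c_G!)=1$. I would work in the semidirect product $\pi_0\rtimes\langle\sigma\rangle$, where $\langle\sigma\rangle$ acts through a finite cyclic quotient. The element $g=\bar a\sigma$ satisfies $g^m=\sigma^m$. Let $n$ be the order of $\sigma$ acting on $\pi_0$; then $n\le|\mathrm{Aut}(\pi_0)|$ and all primes dividing $n$ are at most $c_G$. In the finite group $\pi_0\rtimes\ZZ/mn$, the element $g$ and the element $\sigma$ generate cosets of $\pi_0$. Every prime divisor of $|\pi_0|=c_G$ and of $n$ is $\le c_G$, so $m$ is coprime to both. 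Hence $x\mapsto x^m$ is injective on appropriate cyclic subgroups, and we can extract $m$-th roots inside $\langle g\rangle$ and $\langle\sigma\rangle$ coprime to $m$. From $g^m=\sigma^m$ we get $g$ conjugate (indeed equal up to a $\pi_0$-conjugation, via Lang for the finite group / Schur--Zassenhaus-type coprime argument) to $\sigma$. That is, $\bar a=b^{-1}\sigma(b)$ is a coboundary. Making this coprimality argument precise, including controlling $n$ by $c_G!$ (for example, $\sigma$ acts on the $c_G$-element set $\pi_0$, so its order divides $\mathrm{lcm}(1,\dots,c_G)$, which divides $c_G!$), is the main obstacle.

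\textbf{Conclusion.} With the class trivial, $B\cong A$ over $\FF_p$, i.e. $\FF_pG\cong\FF_pH$. Passi--Sehgal \cite{PSl1972} (case $n=1$, using $D_3(G)=1$) then gives $G\cong H$.
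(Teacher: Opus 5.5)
Your overall route matches the paper's argument, rephrased in Galois cohomology. Lang's theorem handles the identity component, the hypothesis $\gcd(m,c_G!)=1$ is used only to control the finite component group $\pi_0$, and Passi--Sehgal finishes. The reduction to showing that the image $\bar a\in\pi_0$ of $a=\phi^{-1}\,{}^{\sigma}\phi$ is trivial is sound. One caveat: $\mathbf{Aut}(A)$ need not be smooth in characteristic $p$, so pass to its reduced subgroup, i.e.\ work with $\overline{\FF}_p$-points as the paper does.

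The genuine gap is the key step, and your sketch of it does not work as written. In $\pi_0\rtimes\ZZ/mn$ the element $\sigma$ has order $mn$, which is divisible by $m$. So you cannot extract $m$-th roots inside $\langle\sigma\rangle$ there. The appeal to a Schur--Zassenhaus-type conjugacy argument is also never substantiated.

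The missing idea is short, and it shows that $\bar a$ actually equals $1$, not merely that it is a coboundary. The paper argues geometrically. Frobenius $F$ permutes the $c_G$ connected components of the variety $\mathfrak I$ of isomorphisms $A_{\overline{\FF}_p}\to B_{\overline{\FF}_p}$, which is a torsor under $\Aut(A_{\overline{\FF}_p})$. The component $C$ containing $\varphi$ satisfies $F^m(C)=C$. Hence its $F$-orbit has length $d$ with $d\mid m$ and $d\le c_G$, so $d\mid c_G!$, forcing $d=1$.

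In your notation, consider the bijection $T(\pi)=\bar a\,\sigma(\pi)$ of the $c_G$-element set $\pi_0$. Then $T^k(1)=\bar a\,\sigma(\bar a)\cdots\sigma^{k-1}(\bar a)$, so your norm condition says exactly $T^m(1)=1$. The $T$-orbit of $1$ therefore has length dividing $m$ and at most $c_G$, hence length $1$, i.e.\ $\bar a=T(1)=1$.

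Equivalently, use $\pi_0\rtimes\ZZ/n$ instead of $\pi_0\rtimes\ZZ/mn$. Its order $c_G n$ has only prime factors $\le c_G$, so $x\mapsto x^m$ is a bijection on it, and $g^m=\sigma^m$ gives $g=\sigma$.

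Once $\bar a=1$, you have $F(\varphi)=\varphi\circ a$ with $a$ in the identity component $\mathcal E$. Lang's theorem gives $c\in\mathcal E$ with $c^{-1}F(c)=a$, so $\varphi\circ c^{-1}$ is fixed by $F$ and is an isomorphism defined over $\FF_p$.
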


\subsection{Organization}

The paper is organized as follows. In Section \ref{sect:prelim}, we recall monads, Kleisli categories, monoidal categories and convolution monoids, and establish the general Kleisli convolution construction.
In Section \ref{sect:free semimod monad}, we introduce the free semimodule monad and describe matrix multiplication as a Kleisli convolution.
In Section \ref{sect:group KCR}, we study finite and linear Kleisli convolution representations of groups. In Section \ref{sect:ring KCR}, we study Kleisli convolution representations of rings and compare them with the corresponding Eilenberg--Moore category.
In Section \ref{sect:algebra KCR}, we study Kleisli convolution representations of finite-dimensional algebras.
We also consider some special Kleisli convolution representations in Sections \ref{sect:ring KCR} and \ref{sect:algebra KCR}.
In Section \ref{sect:MIP}, we study Question \ref{quest:MIP MS2025} and provide a positive answer for finite extensions $\FF_{p^m}$ satisfying $\gcd(m,c_G!)=1$.

\section{Preliminaries} \label{sect:prelim}

\textsl{In this section, we recall some basic notions concerning monads, Kleisli categories and monoidal categories. We also introduce the Kleisli convolution and describe its monoid structure.}

\subsection{Kleisli categories} \label{subset:Kl cat}

Let $\calC$ be a category. Recall that a \defines{monad} on $\calC$ is a triple $\scrT = (T,\eta,\mu)$,
where $T: \calC\to \calC$ is a functor and $\eta: 1_{\calC} \to T$ and $\mu: T^2\to T$ are natural transformations
satisfying the usual \defines{unit condition} and \defines{associativity condition}. To be precise, for every object $X$, we have
$\mu_X \compos T(\eta_X)=\id_{TX}=\mu_X\compos\eta_{TX}$ and $\mu_X \compos T(\mu_X)=\mu_X \compos \mu_{TX}$.

\begin{definition}\label{def:Kleisli cat} \rm
The Kleisli category of a monad $\scrT = (T, \eta, \mu)$ defined on a category $\calC$,
denoted by $\Kl(\scrT)$, has the same objects as $\mathcal C$, and $\Hom_{\Kl(\scrT)}(X,Y)=\Hom_{\calC}(X,TY)$.
Here, each homomorphism in $\Hom_{\Kl(\scrT)}(X,Y)$ is written as $X \rightsquigarrow Y$,
and we call it a \defines{Kleisli arrow}.
For two Kleisli arrows $f: X\to TY$ and $g: Y\to TZ$, their composition, say \defines{Kleisli composition}, is defined as
\[ g \compos_{\Kl} f=\mu_Z\compos T(g)\compos f: X\to TZ. \]
\end{definition}

\begin{remark}\rm
One can check that $\eta_X: X\to TX$, as a Kleisli arrow in $\Hom_{\Kl(\scrT)}(X,X)$, is the identity arrow $\id_X$ of $X$,
i.e., \[ \id_X = \eta_X: X \rightsquigarrow X ;\]
and for three Kleisli arrows $f: X \rightsquigarrow Y$, $g: Y \rightsquigarrow Z$ and $h: Z \rightsquigarrow W$,
we have
\[ h\compos_{\Kl}(g\compos_{\Kl} f)=(h\compos_{\Kl} g)\compos_{\Kl} f. \]
The two unit conditions of the monad similarly give
\[f\compos_{\Kl}\eta_X=f=\eta_Y\compos_{\Kl} f.\]
Thus, the preceding data given in Definition \ref{def:Kleisli cat} indeed form a category.
\end{remark}

\begin{definition}[{\!\!\cite{Ben1963,Mac1963}}] \rm
A \defines{monoidal category} consists of a category $\calC$, a bifunctor
$\otimes:\calC\times\calC\to\calC$ called the \defines{tensor product},
an object $\bfone$ called the \defines{unit},
and three natural isomorphisms
\[
\alpha_{X,Y,Z}:(X\otimes Y)\otimes Z \xrightarrow{\sim} X\otimes(Y\otimes Z),
\quad
\lambda_X: \bfone \otimes X\xrightarrow{\sim}X,
\quad
\rho_X:X\otimes \bfone \xrightarrow{\sim}X,
\]
called the \defines{associator}, \defines{left unitor}, and \defines{right unitor} respectively.
These are required to satisfy the triangle and pentagon identities,
which ensure that the tensor product is associative and unital in a coherent manner.
\begin{itemize}
  \item Triangle identity: the following diagram
\[\xymatrix{
& X \otimes Y
& \\
  X \otimes (\bfone \otimes Y)
  \ar@{<-}[rr]_{\alpha_{X,\bfone,Y}}
  \ar[ru]^{\id_X \otimes \lambda_Y}
&
& (X \otimes \bfone) \otimes Y
\ar[lu]_{\rho_X \otimes \id_Y} }\]
   commutes.

  \item Pentagon identity: the following diagram
\[\xymatrix{
   ((X\otimes Y)\otimes Z)\otimes W
   \ar[d]_{\alpha_{X,Y,Z}\otimes\id_W}
   \ar[rr]^{\alpha_{X\otimes Y, Z, W}}
 &
 & (X\otimes Y)\otimes(Z\otimes W)
   \ar[d]^{\alpha_{X, Y, Z\otimes W}}
\\
   (X\otimes (Y\otimes Z))\otimes W
   \ar[rd]_{\alpha_{X,Y\otimes Z, W}}
 &
 & X\otimes (Y\otimes (Z\otimes W))
\\
 & X\otimes ((Y\otimes Z)\otimes W) \ar[ru]_{\id_X\otimes\alpha_{Y,Z,W}}
 &
}
\]
  commutes.
\end{itemize}
\end{definition}

Suppose that $\calC$ is a monoidal category and that $T$ is a commutative monoidal monad.
Then $\Kl(\scrT)$ is again a monoidal category. In particular, there are natural arrows
\[ \vartheta_{X,Y}: TX\otimes TY\longrightarrow T(X\otimes Y)\]
which make it possible to take the tensor product
\begin{align}\label{eq:Kleisli tensor}
  f \otimes_{\Kl} g := \vartheta_{Y,Y} \compos (f\otimes g) : X\otimes X \to T(Y\otimes Y)
\end{align}
of Kleisli arrows $f: X \rightsquigarrow Y$ and $g: X\rightsquigarrow Y$.
For monoidal monads and the induced monoidal structures,
we refer to \cite{Kock1972,Zaw2012}.

The following lemma is well-known.

\begin{lemma}\label{lemm:Klcorresp}
Let $\scrT=(T,\eta,\mu)$ be a monad on $\Set$, and let $X$ be an arbitrary set and $\bfone=\{\bullet\}$ be a singleton set.
Then there is a bijection
\[ \Klcorresp_X: \Hom_{\Kl(\scrT)}(\bfone, X) \to T(X) \]
sending each Kleisli arrow $f: \bfone \rightsquigarrow X$ to the element $f(\bullet)$ of $TX$.
\end{lemma}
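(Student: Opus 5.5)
The plan is to use the fact that $\Hom_{\Kl(\scrT)}(\bfone,X)$ is by definition $\Hom_{\Set}(\bfone,TX)$. In $\Set$, maps out of a singleton are the same as elements of the codomain. So the proposed bijection is evaluation at the unique point $\bullet$, and I will exhibit an explicit inverse.

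First, I check that $\Klcorresp_X$ is well defined. Given a Kleisli arrow $f:\bfone\rightsquigarrow X$, Definition \ref{def:Kleisli cat} says that $f$ is a function $\bfone\to TX$. Hence $f(\bullet)$ is an element of the set $TX$, and $\Klcorresp_X(f):=f(\bullet)$ makes sense.

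Next, I define the inverse. For $t\in TX$, let $\hat t:\bfone\to TX$ be the constant function $\bullet\mapsto t$; this is a Kleisli arrow $\bfone\rightsquigarrow X$. Then $\Klcorresp_X(\hat t)=\hat t(\bullet)=t$. In the other direction, $\widehat{f(\bullet)}$ and $f$ agree on the only element $\bullet$ of $\bfone$, so they are equal as functions. The two assignments are therefore mutually inverse, and $\Klcorresp_X$ is a bijection.

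There is no real obstacle here. The only point worth noting is that no monad structure ($\eta$ or $\mu$) is used: the statement is just the representability of the forgetful functor $\Set\to\Set$ by $\bfone$, applied to the set $TX$. If one also wants naturality in $X$ (for later use), I would check that for a Kleisli arrow $g:X\rightsquigarrow Y$ one has
\[\Klcorresp_Y(g\compos_{\Kl} f)=\mu_Y\big(T(g)(f(\bullet))\big).\]
This follows at once from the formula for Kleisli composition evaluated at $\bullet$.
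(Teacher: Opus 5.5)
Your proof is correct and matches the paper's: the paper also views a Kleisli arrow $\bfone\rightsquigarrow X$ as a map $\bfone\to TX$ determined by its value at $\bullet$, proving injectivity directly and surjectivity via $f_A(\bullet)=A$, which is exactly your inverse $t\mapsto\hat t$. Your remarks that $\eta$ and $\mu$ play no role and that $\Klcorresp_Y(g\compos_{\Kl} f)=\mu_Y(T(g)(f(\bullet)))$ are both correct, though they go beyond what the paper states.
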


\begin{proof}
Write the Kleisli arrow $f:\bfone\rightsquigarrow X$ as a map $f:\bfone\to T(X)$ in $\Set$.
Since $\bfone$ has only one element $\bullet$, the map $f$ is uniquely determined by the element $f(\bullet)\in T(X)$.
Let $f,~g:\bfone\rightsquigarrow X$ be two Kleisli arrows satisfying
$\Klcorresp_X(f)=\Klcorresp_X(g)$. Then we have $f(\bullet)=g(\bullet)$.
Since $\bullet$ is the only element of $\bfone$, it follows that $f=g$.
Thus, $\Klcorresp_X$ is injective.
On the other hand, let $A\in T(X)$. Consider the map $f_A:\bfone\to T(X)$ by $f_A(\bullet)=A$.
Then $f_A$ is a Kleisli arrow from $\bfone$ to $X$, and $\Klcorresp_X(f_A)=A$.
Therefore, $\Klcorresp_X$ is a bijection.
\end{proof}

\subsection{Convolution monoids}

First of all, we recall the convolution construction.

\begin{definition}\rm
Let $(\calD,\otimes, \bfone)$ be a monoidal category.
\begin{enumerate}
\item
A \defines{comonoid object} in $\calD$ is a triple $(B,\Delta,\varepsilon)$ consisting of an object $B$, a morphism $\Delta:B\to B\otimes B$ which is called a \defines{comultiplication}, and a morphism $\varepsilon:B\to \bfone$ which is called a \defines{counit}, such that the following diagrams
\[
\begin{gathered}
\xymatrix@C=2cm{
  B \ar[r]^{\Delta} \ar[d]_{\Delta} & B\otimes B \ar[d]^{\Delta\otimes\id_B} \\
  B\otimes B \ar[r]_{\id_B\otimes\Delta} & B\otimes B\otimes B
}
\quad
\xymatrix{
  \bfone \otimes B
  \ar@{<-}[r]^{\varepsilon \otimes\id_B}
  \ar@{<-}[dr]_{\lambda_B^{-1}} & B\otimes B
  \ar@{<-}[d]^{\Delta} & B \otimes \bfone
  \ar@{<-}[l]_{\id_B\otimes \varepsilon }
  \ar@{<-}[dl]^{\rho_B^{-1}} \\
  & B &
}
\end{gathered}
\]
commute, i.e., $\Delta$ is coassociative and $\varepsilon$ is a counit.
It is \defines{cocommutative} if $\Delta = \sigma_{B,B}\compos \Delta$, where $\sigma$ is the symmetry isomorphism of the monoidal category (when it exists).

  \item
A \defines{monoid object} in $\calD$ is a triple $(H,m,u)$ consisting of an object $H$, a morphism $m:H\otimes H\to H$ which is called a \defines{multiplication}, and a morphism $u:\bfone\to H$ which is called a \defines{unit}, such that the following diagrams commute:
\[
\begin{gathered}
\xymatrix@C=1.65cm{
  H\otimes H\otimes H \ar[r]^{\id_H\otimes m} \ar[d]_{m\otimes\id_H} & H\otimes H \ar[d]^{m} \\
  H\otimes H \ar[r]_{m} & H
}
\quad
\xymatrix{
  \bfone\otimes H
  \ar[r]^{u\otimes\id_H}
  \ar[dr]_{\lambda_H}
& H\otimes H \ar[d]^{m}
& H\otimes \bfone
  \ar[l]_{\id_H\otimes u}
  \ar[dl]^{\rho_H} \\
  & H &
}
\end{gathered}
\]
i.e., $m$ is associative and $u$ is a two-sided unit.
It is \defines{commutative} if $m = m\compos\sigma_{H,H}$.
\end{enumerate}
\end{definition}

Let $(X,\Delta,\varepsilon)$ be a comonoid object and let $(Y,m,u)$ be a monoid object in $(\calD,\otimes, \bfone)$.
For $f,g\in\Hom_{\calD}(X,Y)$, their \defines{convolution} is defined as
\[ f \ast g = m\compos(f\otimes g) \compos \Delta : \quad X \To{\Delta} X\otimes X \To{f\otimes g} Y \otimes Y \To{m} Y.\]
Then one can check that
\[ f \ast (g \ast h) = (f\ast g) \ast h \]
for arbitrary morphisms $f$, $g$, $h$ $\in \Hom_{\calD}(X,Y)$.
Therefore, $\Hom_{\calD}(X,Y)$ is a semigroup.
Furthermore, it is a monoid since it has a unit $u\compos\varepsilon$.
This idea can be applied to Kleisli categories, and we obtain the following lemma.


In \cite[Definition 1.13]{AM2010}, Aguiar and Mahajan provided a method to construct a monoid by using a comonoid object and a monoid object. This construction is a standard convolution-monoid construction in a monoidal category.
In \cite{Zaw2012}, Zawadowski proved that $\Kl(\scrT)$ has a monoidal structure.
Thus, we immediately have the following result.

\begin{theorem}\label{thm:Kl convol-semigp}
Let $\calC$ be a monoidal category, $T=(T,\eta,\mu)$ be a monoidal monad, and $\Kl(\scrT)$ be the Kleisli category given by $T$.
Then for any comonoid object $(X,\Delta,\varepsilon)$ and any monoid object $(Y,m,u)$
{\rm(}$\Delta$, $\varepsilon$, $m$, and $u$ are Kleisli arrows{\rm)},
the Kleisli Hom-space $\Hom_{\Kl(\scrT)}(X,Y)$ is a monoid.
\end{theorem}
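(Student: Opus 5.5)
The plan is to reduce the statement to the standard convolution-monoid construction of \cite[Definition 1.13]{AM2010}, applied to the monoidal category $\calD=\Kl(\scrT)$. First I recall from \cite{Kock1972,Zaw2012} that a monoidal monad $\scrT$ on $(\calC,\otimes,\bfone)$ endows $\Kl(\scrT)$ with a monoidal structure. On objects the tensor is that of $\calC$, and on Kleisli arrows $f:X\rightsquigarrow Y$, $g:X'\rightsquigarrow Y'$ it is
\[ f\otimes_{\Kl} g=\vartheta_{Y,Y'}\compos(f\otimes g), \]
as in \eqref{eq:Kleisli tensor}. The unit object is $\bfone$, and the associator and unitors are the images of those of $\calC$ under the canonical functor $\calC\to\Kl(\scrT)$, $h\mapsto\eta\compos h$.

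Bifunctoriality of $\otimes_{\Kl}$ with respect to Kleisli composition is exactly where the compatibility axioms of $\vartheta$ with $\mu$ and $\eta$ enter. In particular, the interchange law
\[ (f'\compos_{\Kl} f)\otimes_{\Kl}(g'\compos_{\Kl} g)=(f'\otimes_{\Kl} g')\compos_{\Kl}(f\otimes_{\Kl} g) \]
rests on these axioms. In many settings one also needs commutativity of the monad here, as the paper's earlier remark suggests. I take this structure as given by \cite{Zaw2012}, noting that the coherence (pentagon and triangle) is inherited from $\calC$ because $\calC\to\Kl(\scrT)$ is strict monoidal on structure maps.

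With $\Kl(\scrT)$ monoidal, and $(X,\Delta,\varepsilon)$, $(Y,m,u)$ a comonoid and a monoid object in it, I define for $f,g\in\Hom_{\Kl(\scrT)}(X,Y)$
\[ f\ast g=m\compos_{\Kl}(f\otimes_{\Kl}g)\compos_{\Kl}\Delta . \]
Associativity is the usual computation. First I expand $(f\ast g)\ast h$ as $m\compos_{\Kl}\bigl((m\compos_{\Kl}(f\otimes_{\Kl}g)\compos_{\Kl}\Delta)\otimes_{\Kl}h\bigr)\compos_{\Kl}\Delta$. Next I use the interchange law to rewrite it as $m\compos_{\Kl}(m\otimes_{\Kl}\id)\compos_{\Kl}(f\otimes_{\Kl}g\otimes_{\Kl}h)\compos_{\Kl}(\Delta\otimes_{\Kl}\id)\compos_{\Kl}\Delta$, inserting associators where needed. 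Then associativity of $m$ and coassociativity of $\Delta$, together with naturality of the associator in $\Kl(\scrT)$, transform this into the corresponding expression for $f\ast(g\ast h)$.

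For the unit, I take $e=u\compos_{\Kl}\varepsilon$. Using the interchange law, $e\ast f$ becomes $m\compos_{\Kl}(u\otimes_{\Kl}\id_Y)\compos_{\Kl}(\bfone\otimes f)\compos_{\Kl}(\varepsilon\otimes_{\Kl}\id_X)\compos_{\Kl}\Delta$. The unit axiom of $m$ then replaces $m\compos_{\Kl}(u\otimes_{\Kl}\id_Y)$ by $\lambda_Y$, and the counit axiom replaces $(\varepsilon\otimes_{\Kl}\id_X)\compos_{\Kl}\Delta$ by $\lambda_X^{-1}$. Naturality of $\lambda$ finally yields $f$, and $f\ast e=f$ follows symmetrically with $\rho$. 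Recall that identities in $\Kl(\scrT)$ are the $\eta_X$.

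I expect the main obstacle to be verifying that $\otimes_{\Kl}$ is genuinely a bifunctor on $\Kl(\scrT)$, meaning that it preserves identities and satisfies the interchange law and naturality of the structure isomorphisms. Every step of the convolution argument silently depends on this. I would check the interchange law first, by writing both sides as maps into $T(Z\otimes Z')$. I would then apply naturality of $\vartheta$ and its compatibility $\mu\compos T\vartheta\compos\vartheta=\vartheta\compos(\mu\otimes\mu)$, which is where a monoidal (rather than merely strong) monad is needed. Once this is secured, the rest is formal, exactly as in \cite{AM2010}.
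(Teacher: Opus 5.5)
Your proposal is correct and is essentially the paper's argument. Both run the standard Aguiar--Mahajan convolution computation in $\Kl(\scrT)$ with unit $u\compos_{\Kl}\varepsilon$, using coassociativity of $\Delta$, associativity of $m$, the associativity condition on $\vartheta$ and the (co)unit axioms. The paper unfolds every Kleisli composite into explicit morphisms of $\calC$ (see \eqref{eq:Kl convol-semigp 2}--\eqref{eq:Kl convol-semigp 8}), whereas you stay in the monoidal category $\Kl(\scrT)$ and isolate the interchange law for $\otimes_{\Kl}$ as the one nontrivial input. That law follows from naturality of $\vartheta$ and $\mu\compos T(\vartheta)\compos\vartheta=\vartheta\compos(\mu\otimes\mu)$ alone, so your hedge that commutativity of the monad might be needed there is unnecessary.
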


For the reader's ease, we include a proof of the above theorem. Since it is formulated in terms of the Kleisli category, it may look different from the treatments in \cite{AM2010, Zaw2012}.
Nevertheless, the proof is conceptually identical, for we only check, directly from the definition, that $\Hom_{\Kl(\scrT)}(X,Y)$ is a monoid.

\begin{proof}
We prove that the convolution is associative and has a unit.
Let $f$ and $g$ be two Kleisli arrows in $\Hom_{\Kl(\scrT)}(X,Y)$.
Then they are morphisms $f,g:X \to TY$ in $\calC$.
By \eqref{eq:Kleisli tensor}, the tensor product of the corresponding Kleisli arrows is represented in $\calC$ by
\[ \vartheta_{Y,Y}\compos(f\otimes g): X\otimes X\to T(Y\otimes Y).\]
Therefore, their Kleisli convolution is the following morphism in $\calC$:
\begin{align} \label{eq:Kl convol-semigp 1}
f\ast g =
\mu_Y \compos T(m) \compos
\mu_{Y\otimes Y} \compos
T(\vartheta_{Y,Y}) \compos
T(f\otimes g) \compos \Delta: X \to TY.
\end{align}

We first prove the associativity. Let $f,g,h \in \Hom_{\Kl(\scrT)}(X,Y)$.
By \eqref{eq:Kl convol-semigp 1}, and using the naturality and associativity conditions of the monoidal structure $\vartheta$,
we obtain
\begin{align}\label{eq:Kl convol-semigp 2}
(f\ast g)\ast h ={} &
 \mu_Y \compos T(\mu_Y\compos T(m)\compos \vartheta_{Y,Y}\compos(m\otimes\eta_Y)) \nonumber \\
\compos & \mu_{Y\otimes Y\otimes Y} \nonumber  \\
\compos & T( \vartheta_{Y\otimes Y,Y}
\compos (\vartheta_{Y,Y}\otimes\id_{TY})
\compos ((f\otimes g)\otimes h)) \nonumber \\
\compos & \mu_{X\otimes X\otimes X} \nonumber \\
\compos & T( \vartheta_{X\otimes X,X} \compos(\Delta\otimes\eta_X))
\compos \Delta.
\end{align}
Since $(Y,m,u)$ is a monoid object in $\Kl(\scrT)$, we have the following diagram
\[ \xymatrix@C=1.65cm{
  Y\otimes Y\otimes Y \ar@{~>}[r]^{\eta_Y\otimes m} \ar@{~>}[d]_{m\otimes\eta_Y} & Y\otimes Y \ar@{~>}[d]^{m} \\
  Y \otimes Y \ar@{~>}[r]_{m} & Y
} \]
commutes in the Kleisli category $\Kl(\scrT)$. The diagram as above can be translated to the following commutative diagram
\[ \xymatrix{
  Y\otimes Y\otimes Y
  \ar[r]_{\eta_Y\otimes m}
  \ar@/^1.5pc/[rr]^{\eta_Y\otimes_{\Kl} m}
  \ar[d]^{m\otimes\eta_Y}
  \ar@/_2.5pc/[dd]_{m\otimes_{\Kl}\eta_Y}
& TY \otimes TY
  \ar[r]_{\vartheta_{Y,Y}}
& T(Y\otimes Y) \ar[d]^{T(m)}
\\
  TY\otimes TY
  \ar[d]^{\vartheta_{Y,Y}}
& & TTY \ar[d]^{\mu_Y}
\\
  T(Y\otimes Y) \ar[r]_{T(m)}
& TTY \ar[r]_{\mu_Y}
& TY
}
\]
in the category $\calC$. Thus, in $\calC$, we have
\begin{align}\label{eq:Kl convol-semigp 3}
  \mu_Y\compos T(m)\compos
  \vartheta_{Y,Y}\compos(m\otimes\eta_Y)
= \mu_Y\compos T(m)\compos
  \vartheta_{Y,Y}\compos(\eta_Y\otimes m).
\end{align}
Similarly, since $(X,\Delta,\varepsilon)$ is a comonoid object in $\Kl(\scrT)$,
we have the following diagram
\[\xymatrix@C=1.65cm{
  X \ar@{~>}[r]^{\Delta} \ar@{~>}[d]_{\Delta} & X\otimes X \ar@{~>}[d]^{\Delta\otimes\eta_X} \\
  X\otimes X \ar@{~>}[r]_{\eta_X\otimes\Delta} & X\otimes X\otimes X
}\]
commutes in the Kleisli category $\Kl(\scrT)$. The diagram as above can be translated to the following commutative diagram
\[\xymatrix{
  X \ar[r]^{\Delta} \ar[d]_{\Delta}
& T(X\otimes X)
  \ar[rd]^{T(\Delta\otimes\eta_X)}
& &
\\
  T(X\otimes X)
  \ar[rd]_{T(\eta_X\otimes\Delta)}
&
& {T(T(X\otimes X)\otimes TX)}
  \ar[rd]^{T(\vartheta_{X\otimes X, X})}
&
\\
& {T(TX\otimes T(X\otimes X))}
  \ar[rd]_{T(\vartheta_{X,X\otimes X})}
&
& TT(X\otimes X\otimes X)
  \ar[d]^{\mu_{X\otimes X\otimes X}}
\\
&
& TT(X\otimes X\otimes X)
  \ar[r]_{\mu_{X\otimes X\otimes X}}
& T(X\otimes X\otimes X)
}\]
in the category $\calC$. Thus, in $\calC$, we have
\begin{align}\label{eq:Kl convol-semigp 4}
  \mu_{X\otimes X\otimes X} \compos T( \vartheta_{X\otimes X,X} \compos(\Delta\otimes\eta_X)) \compos\Delta
= \mu_{X\otimes X\otimes X} \compos T( \vartheta_{X,X\otimes X} \compos(\eta_X\otimes\Delta)) \compos\Delta.
\end{align}
Notice that
\begin{align}\label{eq:Kl convol-semigp 5}
  \vartheta_{Y\otimes Y,Y}
  \compos(\vartheta_{Y,Y}\otimes\id_{TY})
  \compos((f\otimes g)\otimes h)
= \vartheta_{Y,Y\otimes Y}
  \compos(\id_{TY}\otimes\vartheta_{Y,Y})
  \compos(f\otimes(g\otimes h)),
\end{align}
then by \eqref{eq:Kl convol-semigp 2}, \eqref{eq:Kl convol-semigp 3},
\eqref{eq:Kl convol-semigp 4} and \eqref{eq:Kl convol-semigp 5}, we obtain
\begin{align}
(f\ast g)\ast h & =
\mu_Y \compos
  T(\mu_Y \compos T(m)\compos \vartheta_{Y,Y}\compos(\eta_Y\otimes m)) \nonumber \\
& \compos \mu_{Y\otimes Y\otimes Y} \nonumber \\
& \compos T( \vartheta_{Y,Y\otimes Y}
  \compos(\id_{TY}\otimes\vartheta_{Y,Y})
  \compos(f\otimes(g\otimes h)) ) \nonumber \\
& \compos\mu_{X\otimes X\otimes X}\nonumber \\
& \compos T( \vartheta_{X,X\otimes X}
  \compos(\eta_X\otimes\Delta) )
  \compos\Delta \nonumber \\
&= f\ast(g\ast h). \nonumber
\end{align}
Therefore, the convolution is associative, i.e., $\Hom_{\Kl(\scrT)}(X,Y)$ is a semigroup.

Define $e_{\ast} := u \compos_{\Kl} \varepsilon  =  \mu_Y\compos T(u)\compos\varepsilon: ~ X\to TY$.
We next prove that $e_{\ast}$ is the identity of $\Hom_{\Kl(\scrT)}(X,Y)$.
For every $f:X\to TY$, we have
\[ e_{\ast} \ast f = \mu_Y \compos T(m) \compos \mu_{Y\otimes Y} \compos T(e_{\ast}\otimes_{\Kl} f)\compos\Delta \checks{.} \]
{By functoriality of the Kleisli tensor product and associativity of Kleisli composition, we have}
\begin{align}\label{eq:Kl convol-semigp 6}
e_{\ast}\ast f =
  \mu_Y \compos T( \mu_Y\compos T(m)\compos \vartheta_{Y,Y}\compos(u\otimes\eta_Y))
  \compos\mu_{\bfone\otimes Y} \compos
  T(\vartheta_{\bfone,Y} \compos(\varepsilon\otimes f)) \compos \Delta.
\end{align}
Note that we have the following commutative diagram
\[\xymatrix{
  \bfone\otimes Y
  \ar@{~>}[r]^{u\otimes\eta_Y}
  \ar@{~>}[dr]_{\lambda_Y}
& Y\otimes Y \ar[d]^{m}
& Y\otimes \bfone
  \ar@{~>}[l]_{\eta_Y\otimes u}
  \ar@{~>}[dl]^{\rho_Y} \\
  & Y &
}\]
by the monoid object $(Y,m,u)$, it follows that $\lambda_Y = m\compos_{\Kl} (u\otimes_{\Kl}\eta_Y)$ holds, and so,
\begin{align}\label{eq:Kl convol-semigp 7}
  \mu_Y\compos T(m)\compos \vartheta_{Y,Y}\compos(u\otimes\eta_Y) = \eta_Y\compos\lambda_Y.
\end{align}
Dually, we have the following commutative diagram
\[\xymatrix{
  \bfone \otimes X
  \ar@{<~}[r]^{\varepsilon \otimes\eta_X}
  \ar@{<~}[dr]_{\lambda_X^{-1}} & X\otimes X
  \ar@{<~}[d]^{\Delta} & X \otimes \bfone
  \ar@{<~}[l]_{\eta_X\otimes \varepsilon }
  \ar@{<~}[dl]^{\rho_X^{-1}} \\
  & X &
}\]
by the comonoid object $(X,\Delta,\varepsilon)$, it follows that $\lambda_X^{-1} = (\varepsilon \otimes \eta_X)\compos_{\Kl}\Delta$ holds, and so,
\begin{align}\label{eq:Kl convol-semigp 8}
  \mu_{\bfone\otimes Y}\compos T(\vartheta_{\bfone,Y} \compos(\varepsilon\otimes f)) \compos \Delta
= T(\lambda_Y^{-1})\compos f.
\end{align}
By \eqref{eq:Kl convol-semigp 6}, \eqref{eq:Kl convol-semigp 7} and \eqref{eq:Kl convol-semigp 8}, we have
\[ e_{\ast}\ast f
 = \mu_Y \compos T(\eta_Y\compos\lambda_Y)
   \compos T(\lambda_Y^{-1})\compos f = \mu_Y\compos T(\eta_Y)\compos f = f. \]

Similarly, we have
\begin{align*}
  f\ast e_{\ast}
& = \mu_Y \compos T(\mu_Y\compos T(m) \compos \vartheta_{Y,Y}\compos(\eta_Y\otimes u) )
        \compos \mu_{Y\otimes\bfone} \compos T\bigl(\vartheta_{Y,\bfone}
        \compos(f\otimes\varepsilon)) \compos \Delta \\
& = \mu_Y \compos T(\eta_Y\compos\rho_Y) \compos T(\rho_Y^{-1}) \compos f = f.
\end{align*}
Therefore, $\Hom_{\Kl(\scrT)}(X,Y)$ is a monoid under the Kleisli convolution.
\end{proof}


We shall mainly use the case $X=\bfone$. We use the canonical comonoid structure
\[ \Delta_{\bfone}:\bfone\simeq\bfone\otimes\bfone,
 \quad \varepsilon_{\bfone}=\id_{\bfone}. \]
Thus, if $M$ is a monoid object of $\Kl(\scrT)$,
then $\Hom_{\Kl(\scrT)}(\bfone,M)$ is a Kleisli convolution monoid.

\section{Free semimodule monads} \label{sect:free semimod monad}

\textsl{In this section, we recall the free semimodule monad over a commutative semiring. We explain its relation with matrices and realize ordinary matrix multiplication as a Kleisli convolution.}

We first recall the definition of semirings and semimodules, see \cite{Golan} for the basic theory of semirings and semimodules.

\begin{definition}\rm
Recall that a \defines{semiring} is a quintuple $(S,+,0_S,\cdot,1)$ such that $(S,+,0_S)$ is a commutative monoid, $(S,\cdot,1)$ is a monoid, and, for any $r,s,t\in S$ the following formulas
\[ s(t+r)=st+sr, \quad (s+t)r=sr+tr \]
hold. Moreover, the zero element $0_S$ is required to be absorbing, i.e., $0_Ss=0_S=s0_S$ for all $s\in S$.
A semiring $S$ is called \defines{commutative} if its multiplication is commutative.
\end{definition}

\begin{definition}\rm
An \defines{$S$-semimodule} is a commutative monoid $(M,+,0_M)$ together with a scalar multiplication
\[S\times M\longrightarrow M,\quad (s,m)\longmapsto sm\]
such that
\begin{itemize}
  \item $(st)m=s(tm)$;
  \item $(s+t)m=sm+tm$;
  \item $s(m+n)=sm+sn$;
  \item $1m=m$;
  \item $0_Sm=0_M=s0_M$
\end{itemize}
hold for all $s,t\in S$ and $m,n\in M$.
\end{definition}

Let $S$ be a commutative semiring. For a set $X$, let
\begin{align}\label{defeq:semimod}
  \semimod_S(X) = \{a\colon X\to S\mid \supp(a)\text{ is finite}\} \cong \bigoplus_{x\in X} S
\end{align}
Then one can check that it is an $S$-semimodule. In particular, since it is isomorphic to $S^{\oplus X}$, we call it a \defines{free $S$-semimodule}.
Thus, we also write an element of $\semimod_S(X)$ as a formal sum $\sum\limits_{x\in X}a_x[x]$.
If $f\colon X\to Y$ is a map, then, naturally, we have a map $\semimod_S(f): \semimod_S(X)\to \semimod_S(Y)$ which is given by
\[ \semimod_S(f)\bigg(\sum_{x\in X}a_x[x]\bigg)
= \sum_{x\in X}a_x[f(x)]. \]
The unit $\eta^{\semimod_S}_X$ is defined by $\eta^{\semimod_S}_X(x)=[x]$,
and the multiplication $\mu^{\semimod_S}_X\colon\semimod_S^2(X)
:= \semimod_S(\semimod_S(X)) \to \semimod_S(X)$ is defined by
\[ \mu_X^{\semimod_S} \bigg(\sum_{\ell=1}^{n}c_{\ell}
   \bigg[\sum_{x\in X}a_{\ell,x}[x]\bigg]\bigg)
 = \sum_{x\in X}\bigg(\sum_{\ell=1}^{n}c_{\ell}a_{\ell,x}\bigg)[x]. \]

\begin{lemma}[{\!\!\cite[Section 2]{Jacobs2010convexity}}]\label{lemm:semimod-monad}
The triple $\scrM_S = (\semimod_S, \eta^{\semimod_S}, \mu^{\semimod_S})$ is a monad on the set category $\Set$.
\end{lemma}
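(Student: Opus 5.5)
We check directly that $\scrM_S=(\semimod_S,\eta^{\semimod_S},\mu^{\semimod_S})$ satisfies the monad axioms. The plan is to verify well-definedness and functoriality, then naturality of $\eta$ and $\mu$, and finally the two unit laws and the associativity law. Throughout we work with the formal-sum notation $\sum_x a_x[x]$ and use that all sums involved are finite, by the finite-support condition in \eqref{defeq:semimod}.

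\textbf{Functor and well-definedness.} For $f\colon X\to Y$, the element $\semimod_S(f)(a)$ has coefficient at $y$ equal to $\sum_{x\in f^{-1}(y)}a_x$. This is a finite sum with finite support, so $\semimod_S(f)$ is well defined. Clearly $\semimod_S(\id_X)=\id$ and $\semimod_S(g\compos f)=\semimod_S(g)\compos\semimod_S(f)$; both follow by regrouping finite sums in the commutative monoid $(S,+,0_S)$. For $\mu_X$, the input is a finite formal combination $\sum_\ell c_\ell[a_\ell]$ with $a_\ell\in\semimod_S(X)$. The output $\sum_x(\sum_\ell c_\ell a_{\ell,x})[x]$ has support contained in $\bigcup_\ell\supp(a_\ell)$, which is finite. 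We should also note that the formula does not depend on how the element of $\semimod_S^2(X)$ is written. The reason is that if two indices $\ell$ carry the same $a_\ell$, merging their coefficients gives the same result by right distributivity $(c+c')a=ca+c'a$. This point is the one mildly delicate step, and we handle it by reading $\mu_X$ as the unique $S$-linear extension of $\id_{\semimod_S(X)}$ along the basis $\{[a]\}$.

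\textbf{Naturality.} For $\eta$, we have $\semimod_S(f)([x])=[f(x)]=\eta_Y(f(x))$. For $\mu$, we compare $\semimod_S(f)\compos\mu_X$ with $\mu_Y\compos\semimod_S(\semimod_S(f))$ on a generator $c[a]$. Both sides give $\sum_y c\bigl(\sum_{f(x)=y}a_x\bigr)[y]$, using left distributivity in $S$. Linearity then extends the equality to arbitrary finite sums.

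\textbf{Monad laws.} For the first unit law, $\mu_X\compos\eta_{\semimod_S(X)}$ sends $a$ to $1[a]$, which $\mu_X$ maps to $\sum_x 1\cdot a_x[x]=a$. For the second, $\mu_X\compos\semimod_S(\eta_X)$ sends $\sum_x a_x[x]$ to $\sum_x a_x[[x]]$ and then back to $\sum_x a_x[x]$. Here we use $1s=s$ and $0_Ss=0_S$ to kill the cross terms. For associativity, we take a generator $c\bigl[\sum_\ell d_\ell[a_\ell]\bigr]$ of $\semimod_S^3(X)$. Applying $\mu_X\compos\mu_{\semimod_S(X)}$ yields $\sum_x\sum_\ell (cd_\ell)a_{\ell,x}[x]$. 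Applying $\mu_X\compos\semimod_S(\mu_X)$ yields $\sum_x c\bigl(\sum_\ell d_\ell a_{\ell,x}\bigr)[x]$. These agree by associativity of multiplication and distributivity in $S$. Since all maps are additive, agreement on generators suffices.

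The main obstacle is only bookkeeping: the well-definedness of $\mu_X$ under non-unique presentations, handled as above. Commutativity of $S$ is not needed for the monad laws themselves. It matters only later, for the monoidal structure $\vartheta$.
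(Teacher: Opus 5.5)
Your proposal is correct and follows the same route as the paper: a direct check of functoriality, naturality of $\eta$ and $\mu$, the two unit laws, and associativity by comparing coefficients. The only differences are that you reduce to generators via $S$-linearity, and that you explicitly address independence of $\mu_X$ from the chosen presentation, which the paper leaves implicit. (A small slip: in the second unit law the products are $a_x\cdot 1$ and $a_x\cdot 0_S$, so you are using $s1=s$ and $s0_S=0_S$; both sides hold in any semiring, so nothing is lost.)
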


This lemma is very important for our paper. Thus, we provide a proof of it here.

\begin{proof}
We first show that $\semimod_S$ is an endofunctor on $\Set$.
Let $f\colon X\to Y$ be a map and let $a=\sum\limits_{x\in X}a_x[x]\in\semimod_S(X)$.
For every $y\in Y$, the coefficient of $[y]$ in $\semimod_S(f)(a)$ is $\sum\limits_{\substack{x\in X\\f(x)=y}}a_x$.
This sum is finite since $\supp(a)$ is finite. Moreover, $\supp\big(\semimod_S(f)(a)\big) \subseteq f\big(\supp(a)\big)$,
and hence $\semimod_S(f)(a)$ has finite support. Thus, $\semimod_S(f)$ is well defined.
It is clear that $\semimod_S(\id_X)=\id_{\semimod_S(X)}$.
Let $f\colon X\to Y$ and $g\colon Y\to Z$ be two maps.
For $a=\sum\limits_{x\in X}a_x[x]\in\semimod_S(X)$, we have
\begin{align*}
 \semimod_S(g)\big(\semimod_S(f)(a)\big)
 &=\semimod_S(g)\bigg(\sum_{x\in X}a_x[f(x)]\bigg)\\
 &=\sum_{x\in X}a_x[g(f(x))]\\
 &=\semimod_S(g\compos f)(a),
\end{align*}
i.e., $\semimod_S(g)\compos\semimod_S(f) = \semimod_S(g\compos f)$,
and hence $\semimod_S\colon\Set\to\Set$ is an endofunctor.

We next prove that $\eta^{\semimod_S}\colon\id_{\Set} \to \semimod_S$ is a natural transformation.
Here $[x]$ means the formal sum $1[x]$. For every map $f\colon X\to Y$ and every $x\in X$, we have
\[ \semimod_S(f)(\eta^{\semimod_S}_X(x))
 = \semimod_S(f)([x])
 = [f(x)]
 = \eta^{\semimod_S}_Y(f(x)).\]
Consequently, $\semimod_S(f)\compos\eta^{\semimod_S}_X=\eta^{\semimod_S}_Y\compos f$.

We now prove that $\mu^{\semimod_S}\colon\semimod_S^2 \to \semimod_S$ is a natural transformation.
First, the map $\mu^{\semimod_S}_X$ is well defined. Indeed, if
$\Phi=\sum\limits_{\ell=1}^{n}c_{\ell}
   \Big[\sum\limits_{x\in X}a_{\ell,x}[x] \Big]
   \in\semimod_S^2(X)$, then
\[ \supp\big(\mu^{\semimod_S}_X(\Phi)\big) \subseteq
   \bigcup_{\ell=1}^{n} \supp\bigg(\sum_{x\in X}a_{\ell,x}[x]\bigg),\]
which is a finite set. Next, let $f\colon X\to Y$ be a map. For every $y\in Y$, the coefficient of $[y]$ in $\semimod_S(f)(\mu^{\semimod_S}_X(\Phi))$ is
\begin{align}\label{eq:semimod-monad 1}
    \sum_{\substack{x\in X\\f(x)=y}} \sum_{\ell=1}^{n}c_{\ell}a_{\ell,x}
& = \sum_{\ell=1}^{n}c_{\ell} \bigg(\sum_{\substack{x\in X\\f(x)=y}}a_{\ell,x}\bigg).
\end{align}
On the other hand,
\[ \semimod_S^2(f)(\Phi)
= \sum_{\ell=1}^{n}c_{\ell}
  \bigg[
   \sum_{y\in Y}
   \bigg(
     \sum_{\substack{x\in X\\f(x)=y}}a_{\ell,x}
   \bigg)[y]
  \bigg], \]
and hence the coefficient of $[y]$ in $\mu^{\semimod_S}_Y(\semimod_S^2(f)(\Phi))$ is also \eqref{eq:semimod-monad 1}.
It follows that $\semimod_S(f)\compos\mu^{\semimod_S}_X = \mu^{\semimod_S}_Y\compos\semimod_S^2(f)$. Thus, $\mu^{\semimod_S}$ is natural.

It remains to verify the monad axioms. Let $a=\sum\limits_{x\in X}a_x[x]\in\semimod_S(X)$.
Then we have $\eta^{\semimod_S}_{\semimod_S(X)}(a)=[a] =\bigg[\sum\limits_{x\in X}a_x[x]\bigg]$,
and therefore
\[ \mu^{\semimod_S}_X\big(\eta^{\semimod_S}_{\semimod_S(X)}(a)\big)
 = \mu^{\semimod_S}_X\bigg( \bigg[\sum_{x\in X}a_x[x]\bigg] \bigg)
 = \sum_{x\in X}a_x[x] = a. \]
Hence $\mu^{\semimod_S}_X\compos\eta^{\semimod_S}_{\semimod_S(X)} =\id_{\semimod_S(X)}$.
On the other hand, we have $\semimod_S(\eta^{\semimod_S}_X)(a) = \sum\limits_{x\in X}a_x[[x]]$.
It follows that
\[ \mu^{\semimod_S}_X\big(\semimod_S(\eta^{\semimod_S}_X)(a)\big)
 = \mu^{\semimod_S}_X\left(\sum_{x\in X}a_x[[x]]\right)
 = \sum_{x\in X}a_x[x] = a.\]
Thus, $\mu^{\semimod_S}_X\compos\semimod_S(\eta^{\semimod_S}_X)=\id_{\semimod_S(X)}$.

Finally, we verify the associativity law. Let
\[
 \Omega=
 \sum_{i=1}^{p}d_i
 \bigg[
   \sum_{j=1}^{q_i}c_{ij}
   \bigg[
     \sum_{x\in X}a_{ij,x}[x]
   \bigg]
 \bigg]
 \in\semimod_S^3(X).
\]
Applying $\mu^{\semimod_S}_{\semimod_S(X)}$ first gives
\[ \mu^{\semimod_S}_X(\mu^{\semimod_S}_{\semimod_S(X)}(\Omega))
 = \mu^{\semimod_S}_X
   \bigg(
     \sum_{i=1}^{p}\sum_{j=1}^{q_i}
       d_ic_{ij}
       \bigg[
         \sum_{x\in X}a_{ij,x}[x]
       \bigg]
   \bigg)
 = \sum_{x\in X}
   \bigg(
     \sum_{i=1}^{p}\sum_{j=1}^{q_i}
     d_ic_{ij}a_{ij,x}
   \bigg)[x]. \]
On the other hand, we have
$
 \semimod_S(\mu_X)(\Omega)
= \sum\limits_{i=1}^{p}d_i
 \bigg[
   \sum_{x\in X}
   \bigg(
     \sum\limits_{j=1}^{q_i}c_{ij}a_{ij,x}
   \bigg)[x]
 \bigg],$
and hence
\begin{align*}
    \mu^{\semimod_S}_X\big(\semimod_S(\mu_X)(\Omega)\big)
  = \sum_{x\in X}
    \bigg(
      \sum_{i=1}^{p} d_i
        \bigg( \sum_{j=1}^{q_i}c_{ij}a_{ij,x} \bigg)
    \bigg)[x]
  = \sum_{x\in X}
    \bigg(
      \sum_{i=1}^{p}\sum_{j=1}^{q_i} d_ic_{ij}a_{ij,x}
    \bigg)[x].
\end{align*}
Thus, $\mu^{\semimod_S}_X\compos\mu^{\semimod_S}_{\semimod_S(X)} = \mu^{\semimod_S}_X\compos\semimod_S(\mu^{\semimod_S}_X)$.
Therefore, $\scrM_S=(\semimod_S,\eta^{\semimod_S},\mu^{\semimod_S})$ is a monad on $\Set$.
\end{proof}


\begin{definition} \rm
The construction given in Lemma \ref{lemm:semimod-monad} is usually called the \defines{free $S$-semimodule monad}.
\end{definition}

\begin{remark}\rm
Since $S$ is commutative, the map
\[
 \semimod_S(X)\times\semimod_S(Y)
 \to\semimod_S(X\times Y),\quad
 \bigg(\sum_{x\in X} a_x[x],\sum_{y\in Y} b_y[y]\bigg)
 \mapsto\sum_{(x,y)\in X\times Y} a_xb_y[(x,y)] \]
makes it a commutative monoidal monad.
\end{remark}

By Lemma \ref{lemm:semimod-monad}, we can define a Kleisli category $\Kl(\scrM_S)$ on the set category $\Set$.
This Kleisli category $\Kl(\scrM_S)$ can be regarded as a category of matrices.
To be precise, a Kleisli arrow $f: X \rightsquigarrow Y \in \Hom_{\Kl(\scrM_S)}(X,Y)$ can be seen as a map
\[ f: X\to \semimod_S(Y) = \bigg\{\sum_{y\in Y}a_y[y] ~\bigg|~ a_y\in S\checks{,\ a_y=0\text{ for all but finitely many }y} \bigg\} \]
up to isomorphism. We write $f(x)=\sum\limits_{y\in Y}f_{y,x}[y]$.
For a Kleisli arrow $g: Y \rightsquigarrow Z$, we have
\[ (g\compos_{\Kl} f)_{z,x}=\sum_{y\in Y}g_{z,y}f_{y,x}:
  X \To{f} \semimod_S(Y) \To{\semimod_S(g)} \semimod_S^2(Z) \To{\mu_Z} \semimod_S(Z). \]
Indeed, for every $x\in X$,
\begin{align*}
    (g\compos_{\Kl} f)(x)
& = \mu^{\semimod_S}_Z\semimod_S(g)\bigg(\sum_{y\in Y}f_{y,x}[y]\bigg)\\
& = \mu^{\semimod_S}_Z\bigg(\sum_{y\in Y}f_{y,x}\bigg[\sum_{z\in Z}g_{z,y}[z]\bigg]\bigg)\\
& = \sum_{z\in Z}\bigg(\sum_{y\in Y}g_{z,y}f_{y,x}\bigg)[z].
\end{align*}

Thus, under the above correspondence, Kleisli composition is precisely matrix multiplication,
while Kleisli identity arrows correspond to identity matrices.
We therefore \textbf{denote the Kleisli category $\Kl(\scrM_S)$ by $\Mat_S$}.
Its objects are sets, and a morphism from $X$ to $Y$ is a column-finite $S$-valued matrix
whose rows are indexed by $Y$ and whose columns are indexed by $X$.
We have the following fact.

\begin{lemma}\label{fact:Kl(scrM) is Mat}
The tensor product of $\Mat_S$ is given on objects by $X\otimes X'=X\times X'$.
If $F=(f_{y,x})$ and $H=(h_{y',x'})$ are the matrices associated with Kleisli arrows $f\colon X\to\semimod_S(Y)$
and $h\colon X'\to\semimod_S(Y')$, respectively,
then the matrix associated with $f\otimes h$ is their Kronecker product
\[ F\otimes H  = (f_{y,x}h_{y',x'})_{(y,y')\in Y\times Y',(x,x')\in X\times X'},\]
which is equivalent to
\[ (f\otimes h)(x,x') = \sum_{(y,y')\in Y\times Y'} f_{y,x}h_{y',x'}[(y,y')].\]
\end{lemma}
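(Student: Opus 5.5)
We argue directly from the definition \eqref{eq:Kleisli tensor} of the Kleisli tensor product, specialized to the commutative monoidal monad $\scrM_S$ on $(\Set,\times,\bfone)$. On objects nothing needs to be shown beyond noting that the monoidal structure of $\Kl(\scrM_S)$ is inherited from $\Set$, so $X\otimes X'=X\times X'$ with unit $\bfone$. For morphisms, given Kleisli arrows $f\colon X\to\semimod_S(Y)$ and $h\colon X'\to\semimod_S(Y')$, the Kleisli tensor is the composite
\[ X\times X' \To{f\times h} \semimod_S(Y)\times\semimod_S(Y') \To{\vartheta_{Y,Y'}} \semimod_S(Y\times Y'), \]
where $\vartheta$ is the structure map recorded in the Remark following Lemma \ref{lemm:semimod-monad}. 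Note that \eqref{eq:Kleisli tensor} was written only for $f,g$ with equal source and target, but the same formula with $\vartheta_{Y,Y'}$ is the general definition, and we use that.

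The computation is then one line: for $(x,x')\in X\times X'$ we have $f(x)=\sum_{y}f_{y,x}[y]$ and $h(x')=\sum_{y'}h_{y',x'}[y']$. Applying $\vartheta_{Y,Y'}$ gives
\[ (f\otimes h)(x,x')=\sum_{(y,y')\in Y\times Y'} f_{y,x}h_{y',x'}[(y,y')]. \]
The sum is finitely supported, since its support lies in $\supp f(x)\times\supp h(x')$. Reading off the coefficient of $[(y,y')]$ and using the convention $f(x)=\sum_y f_{y,x}[y]$ shows that the matrix of $f\otimes h$ has $((y,y'),(x,x'))$-entry $f_{y,x}h_{y',x'}$. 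This is exactly the Kronecker product $F\otimes H$, with rows indexed by $Y\times Y'$ and columns by $X\times X'$.

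The only point needing care, and the main potential obstacle, is justifying that $\vartheta$ actually endows $\scrM_S$ with a (commutative) monoidal monad structure, so that the tensor is well defined and functorial on $\Kl(\scrM_S)$. Concretely, this means checking naturality of $\vartheta$, its compatibility with $\eta$ (that is, $\vartheta([x],[x'])=[(x,x')]$, which is immediate) and with $\mu$. The $\mu$-compatibility reduces to distributivity together with commutativity of $S$, which is needed to reorder the products $c_\ell a_{\ell,x}\cdot d_k b_{k,x'}$. Since the Remark already asserts this, we may take it as given and only note where commutativity of $S$ enters. Functoriality of the Kleisli tensor is then the standard mixed-product identity $(F\otimes H)(F'\otimes H')=FF'\otimes HH'$, which follows from the same reordering.
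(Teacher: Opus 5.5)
Your proof is correct. The paper states this lemma as a fact without proof, and your unwinding of the Kleisli tensor as $\vartheta_{Y,Y'}\compos(f\times h)$, using the map $\vartheta$ from the Remark after Lemma~\ref{lemm:semimod-monad} and reading off the coefficient of $[(y,y')]$, is exactly the direct computation the lemma's second displayed formula records (including your needed reading of \eqref{eq:Kleisli tensor} with $\vartheta_{Y,Y'}$ for arrows with different sources and targets).
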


The following lemma shows that we can construct a monoid object in $\Mat_S$.
In \cite[Section 3.4]{CD2017}, Contreras and Duman introduced this construction.
But the objects considered here are somewhat distinct.
Hence, for the reader's ease, we nevertheless include a complete proof of the lemma.

\begin{lemma}\label{lemm:monoid object of MatS}
Let $X$ be a finite set and put $E_X=X\times X$. Define
\[ m_X\colon E_X\times E_X\to\semimod_S(E_X), \quad
 m_X((i,j),(p,q))=
 \begin{cases}
 [(i,q)],&j=p,\\
 0,&j\ne p,
 \end{cases}
\]
and, for an arbitrary singleton set $\bfone=\{\bullet\}$, define
\[ u_X: \bfone \to \semimod_S(E_X) \cong S^{\oplus E_X},\quad u_X(\bullet)=\sum_{i\in X}[(i,i)].\]
Then $(E_X,m_X,u_X)$ is a monoid object of $\Mat_S$.
\end{lemma}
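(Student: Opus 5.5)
We verify the monoid axioms directly in $\Mat_S$, using the description of Kleisli composition as matrix multiplication and of the Kleisli tensor as the Kronecker product (Lemma \ref{fact:Kl(scrM) is Mat}). Since every Kleisli arrow into a set $Z$ is determined by its values on elements, and every element of $\semimod_S(Z)$ is an $S$-linear combination of basis symbols $[z]$, it suffices to check each required equality of Kleisli arrows on basis elements of the domain.

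First, we record $m_X$ and $u_X$ as matrices. The matrix of $m_X$ has entry $1$ in position $((i,q),((i,j),(j,q)))$ and $0$ elsewhere. Thus $m_X$ is the $S$-linear extension of the matrix-unit rule $e_{ij}e_{pq}=\delta_{jp}e_{iq}$. The matrix of $u_X$ is the column vector with entries $1$ at the diagonal pairs $(i,i)$. Column-finiteness is automatic, since $X$ is finite, and in any case each column of $m_X$ has at most one nonzero entry.

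Next, we check associativity. We must show that the Kleisli composites $m_X\compos_{\Kl}(m_X\otimes_{\Kl}\eta_{E_X})$ and $m_X\compos_{\Kl}(\eta_{E_X}\otimes_{\Kl}m_X)$ agree. This is understood up to the associator of $\Set$, which is an honest bijection of sets and hence a permutation matrix. We evaluate both sides on a basis triple $((i,j),(p,q),(r,s))$.

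By Lemma \ref{fact:Kl(scrM) is Mat}, the left side first sends the triple to $\delta_{jp}[((i,q),(r,s))]$. Composing with $m_X$ via the composition formula $(g\compos_{\Kl}f)_{z,x}=\sum_y g_{z,y}f_{y,x}$ then gives $\delta_{jp}\delta_{qr}[(i,s)]$. The right side gives $\delta_{qr}\delta_{jp}[(i,s)]$ by the symmetric computation, so the two sides agree. The only point needing care is that $0$ is absorbing in $S$ and that $\mu$ sends the zero element to zero, so that vanishing cases propagate correctly.

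Then we check the unit laws. We need $m_X\compos_{\Kl}(u_X\otimes_{\Kl}\eta_{E_X})=\eta_{E_X}\compos\lambda_{E_X}$, where $\lambda$ is the canonical bijection $\bfone\times E_X\to E_X$. On a basis element $(\bullet,(p,q))$, the tensor gives $\sum_{i\in X}[((i,i),(p,q))]$, and applying $m_X$ gives $\sum_i\delta_{ip}[(i,q)]=[(p,q)]$, as required. The right unit law is analogous, giving $\sum_i\delta_{qi}[(p,i)]=[(p,q)]$.

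The main obstacle, although a mild one, is bookkeeping. We must make sure that the monoidal structure on $\Mat_S$ is the one induced by the commutative monoidal monad, with unitors and associators of $\Set$ included as Kleisli arrows through $\eta$. We also use that the finiteness of $X$ is exactly what makes $u_X(\bullet)$ a finitely supported element of $\semimod_S(E_X)$. Once these identifications are fixed, each axiom reduces to the Kronecker delta calculations above, i.e.\ to the matrix-unit identities. Consequently $\Hom_{\Kl(\scrM_S)}(\bfone,E_X)$ becomes a convolution monoid by Theorem \ref{thm:Kl convol-semigp}.
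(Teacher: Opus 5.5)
Your proposal is correct and follows essentially the same route as the paper. Both reduce the associativity and unit diagrams to the matrix-unit rule $e_{ij}e_{pq}=\delta_{jp}e_{iq}$ checked on basis elements; the paper phrases this through the $S$-bilinear extension of $m_X$, while you make the Kronecker-product, associator and unitor bookkeeping in $\Mat_S$ explicit.
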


\begin{proof}
\checks{In the following calculations, we use the $S$-bilinear extension of $m_X$ and keep the same notation. We identify each $(i,j)$ with its basis element $[(i,j)]$.}
To see this directly, denote $(i,j)$ by $e_{ij}$. The definition gives $m_X(e_{ij},e_{pq})=\delta_{jp}e_{iq}$,
where a zero coefficient means the zero formal sum.
Hence, we have
\begin{align*}
 m_X(m_X(e_{ij},e_{pq}),e_{rs}) = \delta_{jp}\delta_{qr}e_{is}, \quad
 m_X(e_{ij},m_X(e_{pq},e_{rs})) = \delta_{qr}\delta_{jp}e_{is}.
\end{align*}
The two expressions are equal. By $S$-bilinearity, the associativity diagram
\[\xymatrix@C=1.5cm{
  E_X \times E_X \times E_X
  \ar[r]^{\checks{m_X \otimes_{\Kl} \eta_{E_X}^{\semimod_S}}}
  \ar[d]_{\checks{\eta_{E_X}^{\semimod_S} \otimes_{\Kl} m_X}}
& E_X \times E_X
  \ar[d]^{m_X}
  \\
  E_X \times E_X
  \ar[r]_{m_X}
& E_X }\]
commutes \checks{in $\Mat_S$}. Moreover,
\[
 m_X\bigg(\sum_{p\in X}e_{pp},e_{ij}\bigg)=e_{ij}
 \quad\text{and}\quad
 m_X\bigg(e_{ij},\sum_{p\in X}e_{pp}\bigg)=e_{ij}.
\]
Thus, the two unit diagrams also commute.
\end{proof}

\begin{remark}\rm
Under the canonical notations, we have $\semimod_S(E_X) = \semimod_S(X\times X) \cong \Mat_X(S)$,
the basis element $[(i,j)]$ corresponds to the matrix unit $e_{ij}$.
The multiplication $m_X$ is therefore determined by $e_{ij}e_{pq}=\delta_{jp}e_{iq}$,
and $u_X(*)$ corresponds to the identity matrix $I_X=\sum\limits_{i\in X}e_{ii}$.
Thus, the above monoid object is precisely the full matrix semiring over $S$, expressed in the Kleisli category $\Mat_S$.
\end{remark}

An earlier version of the following proposition was formulated over the complex field $\mathbb{C}$, see for example \cite{Kha2013}.
In \cite[Example 5]{Saigo2021}, it is shown by Saigo that every finite pair groupoid induces a convolution algebra, which must be isomorphic to a full matrix algebra.
Saigo's example generalizes this earlier version to the more general setting of rigs.
The following proposition expresses this standard fact in terms of the Kleisli category $\Mat_S$.

\begin{proposition}\label{prop:matrix-convolution}
For every finite set $X$, there is an isomorphism of $S$-algebras
\[ \Phi_X: \Hom_{\Mat_S}(\bfone,E_X) \to \Mat_X(S). \]
Under this isomorphism, Kleisli convolution is ordinary matrix multiplication.
\end{proposition}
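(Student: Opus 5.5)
The plan is to build $\Phi_X$ as the composite of two bijections already available. Lemma \ref{lemm:Klcorresp} identifies a Kleisli arrow $f:\bfone\rightsquigarrow E_X$ with the element $f(\bullet)\in\semimod_S(E_X)$. The Remark after Lemma \ref{lemm:monoid object of MatS} identifies $\semimod_S(E_X)\cong\Mat_X(S)$ by sending $[(i,j)]$ to the matrix unit $e_{ij}$. So we set
\[
\Phi_X(f)=\sum_{(i,j)\in E_X} f_{(i,j),\bullet}\, e_{ij},
\]
that is, the $(i,j)$-entry of $\Phi_X(f)$ is the coefficient of $[(i,j)]$ in $f(\bullet)$. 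Since $X$ is finite, there is no support condition to worry about. Hence $\Phi_X$ is a bijection, and it is visibly $S$-linear for the pointwise $S$-semimodule structure on $\Hom_{\Mat_S}(\bfone,E_X)$.

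Next we compute the Kleisli convolution from Theorem \ref{thm:Kl convol-semigp}, using the canonical comonoid structure $\Delta_{\bfone}:\bfone\to\bfone\otimes\bfone$, $\varepsilon_{\bfone}=\id_{\bfone}$ on $\bfone$ (as Kleisli arrows, these are $\eta$ composed with the canonical bijections). For $f,g:\bfone\rightsquigarrow E_X$ we have $f\ast g=m_X\compos_{\Kl}(f\otimes_{\Kl} g)\compos_{\Kl}\Delta_{\bfone}$.
\begin{enumerate}
  \item Composing with $\Delta_{\bfone}$ is harmless, since it is a Kleisli identity up to the bijection $\bfone\cong\bfone\times\bfone$.
  \item By Lemma \ref{fact:Kl(scrM) is Mat}, $f\otimes_{\Kl} g$ sends $(\bullet,\bullet)$ to $\sum f_{(i,j)}g_{(p,q)}[((i,j),(p,q))]$.
  \item Kleisli composition with $m_X$ is matrix multiplication, i.e.\ $S$-linear extension of $m_X$. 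This gives
\[
(f\ast g)(\bullet)=\sum_{i,j,p,q} f_{(i,j)}g_{(p,q)}\,\delta_{jp}[(i,q)]=\sum_{i,q}\Big(\sum_{j}f_{(i,j)}g_{(j,q)}\Big)[(i,q)].
\]
\end{enumerate}
This last expression is exactly $\Phi_X(f)\Phi_X(g)$, so $\Phi_X(f\ast g)=\Phi_X(f)\Phi_X(g)$.

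For the unit, $e_\ast=u_X\compos_{\Kl}\varepsilon_{\bfone}=u_X$, and $\Phi_X(u_X)=\sum_i e_{ii}=I_X$. Thus $\Phi_X$ is a bijective, unital, multiplicative, $S$-linear map. It follows that $\Hom_{\Mat_S}(\bfone,E_X)$, with convolution and pointwise $S$-action, is an $S$-algebra and $\Phi_X$ is an isomorphism of $S$-algebras. Alternatively, one can transport the algebra structure along $\Phi_X$, and then bilinearity of $\ast$ follows from the formula in the third step.

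The only real obstacle is bookkeeping in the unwinding of \eqref{eq:Kl convol-semigp 1}: one must check that the $\mu$'s and the structure map $\vartheta$ of the Remark reduce precisely to the Kronecker product followed by the bilinear extension of $m_X$. I would handle this by evaluating \eqref{eq:Kl convol-semigp 1} at $\bullet$ step by step, using the explicit formula for $\mu^{\semimod_S}$. The naturality and coherence used in the proof of Theorem \ref{thm:Kl convol-semigp} are not needed here.
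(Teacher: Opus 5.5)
Your proposal is correct and follows essentially the same route as the paper: you identify the Kleisli arrow with the coefficient matrix of its value at $\bullet$, read the Kleisli tensor of two arrows out of $\bfone$ as the Kronecker-type product of coefficients, apply the bilinear extension of $m_X$ to obtain the entries $\sum_j a_{ij}b_{jq}$, and check that $u_X$ is sent to $I_X$. Your explicit handling of $\Delta_{\bfone}$, of $\varepsilon_{\bfone}$, and of why $\Hom_{\Mat_S}(\bfone,E_X)$ carries an $S$-algebra structure is slightly more careful than the paper, which leaves these points implicit.
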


\begin{proof}
A Kleisli arrow $\alpha: \bfone=\{\bullet\} \rightsquigarrow E_X = X\times X$
in $\Hom_{\Mat_S}(\bfone,E_X)$ is uniquely determined by
$\alpha(\bullet)=\sum\limits_{(i,j)}a_{ij}[(i,j)]$.
Write $\Phi_X(\alpha)$ as $(a_{ij})$. This is clearly an isomorphism of $S$-semimodules.
Indeed, its inverse sends a matrix $(a_{ij}) \in \Mat_X(S)$ to the correspondence $\bullet \mapsto$ $\sum\limits_{(i,j)}a_{ij}[(i,j)]$.
Now, let
\[ \alpha(\bullet)=\sum_{(i,j)\in E_X}a_{ij}[e_{ij}]
   \quad\text{and}\quad
   \beta(\bullet)=\sum_{(p,q)\in E_X}b_{pq}[e_{pq}]. \]
The tensor product of these two Kleisli arrows sends the element $\bullet \in \bfone$ to the finite sum
$\sum\limits_{(i,j),(p,q) \in E_X} a_{ij}b_{pq}[(e_{ij},e_{pq})]$.
Applying $m_X$ to it, we obtain that this application deletes all summands for which $j\ne p$,
and then sends $(e_{ij},e_{jq})$ to $e_{iq}$, see Lemma \ref{lemm:monoid object of MatS}. Consequently,
\[ (\alpha*\beta)(\bullet) = \sum_{(i,q)\in E_X}\bigg(\sum_{j\in X}a_{ij}b_{jq}\bigg)[e_{iq}]. \]
Then the coefficient of $(i,q)$ in $\alpha*\beta$ is $\sum\limits_{j\in X}a_{ij}b_{jq}$.
This is the $(i,q)$-entry of the ordinary matrix product $(a_{ij})(b_{ij})$. Hence
\[ \Phi_X(\alpha*\beta)=\Phi_X(\alpha)\Phi_X(\beta). \]
Finally, $u_X(*)=\sum\limits_{i\in X}[e_{ii}]$ is sent to the identity matrix.
Therefore, $\Phi_X$ is an isomorphism of unital $S$-algebras.
\end{proof}

\begin{remark}\label{rem:basis} \rm
The finite set $X$ should be regarded as a choice of basis. Therefore, an
object of the form $\semimod_S(X)$ is a free $S$-semimodule with a specified
basis. Changing the basis gives an isomorphic representation, but it need not
give the same Kleisli arrow.
\end{remark}

\section{Groups and their Kleisli convolution representations} \label{sect:group KCR}

\textsl{In this section, we introduce finite and linear Kleisli convolution representations of groups. We compare their categories with the categories of finite permutation representations and finite-dimensional linear representations, respectively.}

\subsection{Representations of groups}

We recall some basic notions of permutation representations of groups, see \cite{BiggsWhite1979}.
Let $G$ be a group. Recall that a \defines{finite permutation representation} of $G$ is a pair $(X,\sigma_X)$ consisting of a finite set $X$ and a group homomorphism
\[ \sigma_X\colon G\to\operatorname{Sym}(X) = \{\varsigma:X\to X\mid\varsigma\text{ is a bijection}\}, \]
where $\operatorname{Sym}(X)$ is a group under the composition of maps.
Equivalently, a finite permutation representation of $G$ is a \defines{finite left $G$-set},
i.e., a finite set $X$ with a left $G$-action $G\times X\to X$, $g\cdot x=\sigma_X(g)(x)$.

Let $(X,\sigma_X)$ and $(Y,\sigma_Y)$ be two finite permutation representations of $G$.
A \defines{$G$-homomorphism of permutation representations} from $(X,\sigma_X)$ to $(Y,\sigma_Y)$ is a map $f:X\to Y$ such that $f\compos\sigma_X(g)=\sigma_Y(g)\compos f$ for every $g\in G$.
Equivalently, $f$ satisfies $f(g\cdot x)=g\cdot f(x)$ for all $g\in G$ and $x\in X$.
The composition of two $G$-homomorphisms $f_1:(X,\sigma_X)\to(Y,\sigma_Y)$ and $f_2:(Y,\sigma_Y)\to(Z,\sigma_Z)$ is defined to be their usual composition $f_2\compos f_1:X\to Z$.
In this case, for every $g\in G$, we have
\begin{align*}
 (f_2\compos f_1)\compos\sigma_X(g)
 = f_2\compos(f_1\compos\sigma_X(g)) = f_2\compos(\sigma_Y(g)\compos f_1)
 = (f_2\compos\sigma_Y(g))\compos f_1 = \sigma_Z(g)\compos(f_2\compos f_1).
\end{align*}
Hence, $f_2\compos f_1$ is also a $G$-homomorphism.
For every $(X,\sigma_X)$, the identity map $\id_X$ is a homomorphism from $(X,\sigma_X)$ to itself.
The associativity of composition and the identity laws follow from those for maps of sets.
Therefore, finite permutation representations of $G$ and their homomorphisms form a category, which is denoted by $\Perm(G)$.

Now, we recall some basic notions of linear representations, see \cite[Chapters 1, 4, 6]{EGHLSVY2011}.
Let $\kk$ be a field. Recall that a \defines{finite-dimensional linear representation} of $G$ over $\kk$ is
a pair $(V,\rho_V)$ consisting of a finite-dimensional $\kk$-vector space $V$ and a group homomorphism
\[ \rho_V:G\to\operatorname{GL}(V),\]
where $\operatorname{GL}(V)$ is the group of all invertible $\kk$-linear maps from $V$ to itself.
Equivalently, a finite-dimensional linear representation of $G$ over $\kk$ is a finite-dimensional left module over the group algebra $\kk G$, where the left $\kk G$-action is given by $\kk G\times V \to V$, $g\cdot v=\rho_V(g)(v)$.

Let $(V,\rho_V)$ and $(W,\rho_W)$ be two finite-dimensional linear representations of $G$ over $\kk$.
A \defines{$\kk G$-homomorphism of linear representations} from $(V,\rho_V)$ to $(W,\rho_W)$ is
a $\kk$-linear map $\varphi:V\to W$ such that $\varphi\compos\rho_V(g)=\rho_W(g)\compos\varphi$ for every $g\in G$.
Equivalently, $\varphi$ satisfies $\varphi(g\cdot v)=g\cdot\varphi(v)$ for all $g\in G$ and $v\in V$.
Thus, the $\kk G$-homomorphisms are precisely the homomorphisms of left $\kk G$-modules.

The composition of two $\kk G$-homomorphisms $\varphi:(V,\rho_V)\to(W,\rho_W)$ and $\psi:(W,\rho_W)\to(U,\rho_U)$ is defined to be their usual composition $\psi\compos\varphi:V\to U$.
In this case, for every $g\in G$, we have
\begin{align*}
   (\psi\compos\varphi)\compos\rho_V(g)
 = \psi\compos(\varphi\compos\rho_V(g))
 = \psi\compos(\rho_W(g)\compos\varphi)
 = (\psi\compos\rho_W(g))\compos\varphi
 = \rho_U(g)\compos(\psi\compos\varphi).
\end{align*}
Hence, $\psi\compos\varphi$ is also a $\kk G$-homomorphism. For every $(V,\rho_V)$, the identity map $\id_V$ is a homomorphism from $(V,\rho_V)$ to itself.
The associativity of composition and the identity laws follow from those for linear maps.
Therefore, finite-dimensional linear representations of $G$ over $\kk$ and their $\kk G$-homomorphisms form a category, which is denoted by $\rep_{\kk}(G)$.

It should be noted that the following fact holds.

\begin{fact}\label{fact:Perm vs rep}
Categories $\Perm(G)$ and $\rep_{\kk}(G)$ are different.
\end{fact}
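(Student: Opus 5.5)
We read ``different'' as the strongest natural claim: $\Perm(G)$ and $\rep_{\kk}(G)$ are not equivalent as categories, for every group $G$ and every field $\kk$. A weaker claim that fails in general is not what we want. The plan is to exhibit a categorical invariant, preserved by any equivalence, that one category has and the other lacks. Then, to show that the natural comparison functor is also not an equivalence, we check that linearization is faithful but neither full nor essentially surjective.

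\textbf{Step 1: zero objects.} We compare initial and terminal objects.

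\begin{itemize}
\item In $\rep_{\kk}(G)$ the zero representation $(0,\rho_0)$ is both initial and terminal, so it is a zero object.
\item In $\Perm(G)$ the empty $G$-set $\emptyset$ is initial, since there is exactly one map out of it and that map is vacuously equivariant.
\item The one-point $G$-set $\{\bullet\}$ with trivial action is terminal.
\item These two objects are not isomorphic, because $\Hom_{\Perm(G)}(\{\bullet\},\emptyset)=\emptyset$. Hence $\Perm(G)$ has no zero object.
\end{itemize}

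Equivalences preserve initial and terminal objects, so they preserve zero objects. Therefore no equivalence $\Perm(G)\simeq\rep_{\kk}(G)$ exists. A second invariant gives the same conclusion: every Hom-set in $\rep_{\kk}(G)$ is nonempty, since it contains the zero map, whereas $\Hom_{\Perm(G)}(\{\bullet\},\emptyset)$ is empty.

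\textbf{Step 2: the linearization functor.} Let $\kk[-]:\Perm(G)\to\rep_{\kk}(G)$ send a finite $G$-set $X$ to the permutation module $\kk X$.

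\begin{itemize}
\item The functor is faithful: distinct maps of sets give distinct linear maps on the basis.
\item It is not full. We have $\Hom_{\Perm(G)}(\{\bullet\},\{\bullet\})$ a single element, while $\End_{\kk G}(\kk)=\kk$ has at least two elements. Moreover the zero map $\kk\to\kk$ is not induced by any map of sets.
\item It is not essentially surjective, which we see in two cases:
 \begin{itemize}
 \item If $G\neq 1$, we use the fact that every nonzero permutation module $\kk X$ contains the trivial module. Indeed, the orbit sum $\sum_{x\in O}x$ of any orbit $O$ is a nonzero fixed vector. So any representation without nonzero fixed vectors is not of the form $\kk X$. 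Concrete examples are the sign representation of $C_2$ when $\operatorname{char}\kk\neq2$, or a $2$-dimensional indecomposable representation of $C_p$ with $p=\operatorname{char}\kk$. In the second case dimension counting with orbits rules it out, since a permutation module of dimension $2$ is $\kk\oplus\kk$ or $\kk C_2$, and for $p$ odd neither is isomorphic to a nontrivial indecomposable.
 \item If $G=1$, every representation is a permutation module, so essential surjectivity does hold. The failure of fullness from the previous item still applies, however.
 \end{itemize}
\end{itemize}

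\textbf{Main obstacle.} The only delicate point is the choice of a non-permutation representation in Step 2, which depends on $G$ and $\kk$. This is why the core argument is placed in Step 1: the zero-object invariant is uniform in $G$ and $\kk$, and Step 2 serves only as supporting illustration.
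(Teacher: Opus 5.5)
Your proof is correct, and its core (Step 1) takes a genuinely different route from the paper's.

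\textbf{Comparison with the paper.} The paper only remarks that the linearization functor $\kk[-]\colon\Perm(G)\to\rep_{\kk}(G)$ has the permutation modules as its essential image, and that this is \emph{usually} a proper subcategory. At best this shows that one particular functor fails to be an equivalence, and only when non-permutation modules exist; for $G=1$ it gives nothing. Your Step 1 shows that no equivalence of categories exists at all, uniformly in $G$ and $\kk$. You can phrase it either through zero objects or through the empty set $\Hom_{\Perm(G)}(\{\bullet\},\emptyset)$, set against the always-nonempty Hom-sets of $\rep_{\kk}(G)$. Your non-fullness observation ($\End_{\kk G}(\kk)=\kk$ against the one-element endomorphism set of $\{\bullet\}$) also makes the statement about $\kk[-]$ hold in every case. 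The paper's remark, by contrast, points at the more substantive difference behind the later comparison with Kleisli convolution representations, namely which modules actually arise. Your invariant only detects that one category is additive and the other is not.

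\textbf{A false sub-claim in Step 2.} It is not true that $\kk[-]$ fails to be essentially surjective whenever $G\neq 1$; you should remove or hedge this claim. Take $G=C_2$ with $\operatorname{char}\kk=2$. The only indecomposable modules are $\kk$ and $\kk C_2$, both permutation modules, and direct sums correspond to disjoint unions. So every object of $\rep_{\kk}(G)$ is a permutation module. The same happens for any nontrivial $G$ whose finite-dimensional representations over $\kk$ are all trivial, e.g.\ $G=\mathbb{Q}$ over a finite field, where the image in $\operatorname{GL}_n$ is a finite divisible group and hence trivial. Two further problems:
\begin{enumerate}
\item Your fixed-vector argument cannot cover finite $p$-groups in characteristic $p$, since every representation then has a nonzero fixed vector.
\item Your $C_p$ example works only for odd $p$. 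In that case the only $2$-dimensional permutation module is $\kk\oplus\kk$; the module $\kk C_2$ does not arise for $C_p$ at all.
\end{enumerate}
None of this affects your conclusion, since Step 1 and non-fullness already suffice.
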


\noindent Indeed, there is a linearization functor $\kk[-]\colon\Perm(G)\to\rep_{\kk}(G)$,
but its essential image consists of permutation modules and is usually a proper subcategory of $\rep_{\kk}(G)$.

\subsection{Kleisli convolution representations of groups}

For a set $X$, let $\calP(X):=\{A: A\subseteq X\}$.
Let \[ \scrP
= (\calP,\, \eta^{\calP}:\id_{\Set}\to\calP, \, \mu^{\calP}:\calP^2\to \calP) \]
be the triple defined by:
\begin{enumerate}[label=($\mathscr{P}$\arabic*)]
  \item for any set $X$ in $\Set$ and any $x\in X$, $\eta_X(x)=\{x\}$ is a set containing only one element $x$;
    \label{P1}
  \item for any $\mathcal A \in \calP(\calP(X))$, $\mu_X(\mathcal A)$ is the union $\bigcup\limits_{A\in\mathcal A}A$ $(\in \calP(X))$.
    \label{P2}
\end{enumerate}
Here, for a map $f:X\to Y$, we define $\calP(f)$ to be the map
\[ \calP(f): \calP(X) \to \calP(Y) \]
sending each $A$ to the \defines{direct image} $f(A):=\{f(a)\in Y : a\in A\} \in \calP(Y)$.
Then, by \cite[Lemma 2.2]{WHL2026}, $\scrP$ is a monad on $\Set$, and we call it a \defines{powerset monad}.
In this case, we obtain a Kleisli category $\Kl(\scrP)$ whose Kleisli arrow $X \rightsquigarrow Y$ is a map $X \to \calP(Y)$.

For a finite set $X$, the invertible Kleisli arrows $X \rightsquigarrow X$ are precisely the graphs of permutations of $X$,
i.e., \[ \mathrm{Iso}_{\Kl(\scrP)}(X,X) \cong \mathrm{Sym}(X). \]
Indeed, a Kleisli arrow $f:X \rightsquigarrow X$ is invertible if and only if there is a Kleisli arrow $g:X \rightsquigarrow X$ such that
\begin{align}\label{eq:eta 08302325}
  \eta^{\calP}_X = g \compos_{\Kl} f:
& X \To{f} \calP(X) \To{\calP(g)} \calP(\calP(X)) \To{\mu^{\calP}_X} \calP(X) \\
& x\mapsto \bigcup_{y\in f(x)}g(y) = \eta^{\calP}_X(x) = \{x\} \nonumber
\end{align}
and
\begin{align}\label{eq:eta 08302326}
  \eta^{\calP}_X = f \circ_{\Kl} g:
& X \To{g} \calP(X) \To{\calP(f)} \calP(\calP(X)) \To{\mu^{\calP}_X} \calP(X) \\
& x\mapsto \bigcup_{y\in g(x)}f(y) = \eta^{\calP}_X(x) = \{x\} \nonumber
\end{align}
hold. For every $x\in X$, it follows from \eqref{eq:eta 08302325} that there is some $y\in f(x)$ such that $x\in g(y)$. Applying \eqref{eq:eta 08302326} to $y$, we obtain $\bigcup\limits_{z\in g(y)}f(z)=\{y\}$. Since $x\in g(y)$, it follows that $f(x)\subseteq\{y\}$. On the other hand, \eqref{eq:eta 08302325} implies that $f(x)$ is non-empty. Hence, $f(x)=\{y\}$. Therefore, for every $x\in X$, there is a unique element $\bar f(x)\in X$ such that $f(x)=\{\bar f(x)\}$. Similarly, for every $x\in X$, there is a unique element $\bar g(x)\in X$ such that $g(x)=\{\bar g(x)\}$. Equations \eqref{eq:eta 08302325} and \eqref{eq:eta 08302326} imply that $\bar g\compos\bar f=\id_X=\bar f\compos\bar g$. Thus, $\bar f:X\to X$ induced by the invertible Kleisli arrow $f:X\rightsquigarrow X$ is a bijection. Conversely, every permutation $\sigma:X\to X$ induces an invertible Kleisli arrow $x\mapsto\{\sigma(x)\}$.

As in Section \ref{sect:free semimod monad}, define $E_X=X\times X$.
Then, parallel to Lemma \ref{lemm:monoid object of MatS}, the Kleisli category $\Kl(\scrP)$ has a monoid object
\begin{center}
$(E_X, m_X: E_X\times E_X\to \calP(E_X), u_X: \bfone\to \calP(E_X))$.
\end{center}
In this case, the tensor product of objects is given by the Cartesian product, i.e., $\otimes = \times$,
and, by Lemma \ref{lemm:Klcorresp}, we have a bijection
\begin{align}\label{iso:Kleisli P}
  \Klcorresp_{\scrP,X}: \KlHom_{\scrP}(X):=\Hom_{\Kl(\scrP)}(\bfone,E_X) \to \calP(E_X),
  \quad (f:\bfone \rightsquigarrow E_X) \mapsto f(\bullet)
\end{align}
Thus, each element of $\KlHom_{\scrP}(X)$ can be seen as a subset of $E_X=X\times X = X\otimes X$.

We can define a multiplication in $\KlHom_{\scrP}(X)$ by the convolution
\begin{align*}
f\ast g: \ \bfone & \To{\Delta} \calP(\bfone\times \bfone) \\
& \To{\calP(f\times g)} \calP(\calP(E_X)\times \calP(E_X))
  \To{\calP(\vartheta_{E_X,E_X})} \calP^2(E_X\times E_X)
  \To{\mu_{E_X\times E_X}} \calP(E_X\times E_X) \\
& \To{\calP(m_X)} \calP^2(E_X)
  \To{\mu_{E_X}} \calP(E_X),
\end{align*}
which is called a \defines{Kleisli convolution}, where $\vartheta_{E_X,E_X}$ is called a \defines{Fubini map} in this case.
The following lemma shows that $\KlHom_{\scrP}(X)$ is a monoid.

\begin{lemma} \label{lemm:KlHom-monoid}
The Kleisli Hom-space $\KlHom_{\scrP}(X)$ is a monoid whose multiplication is given by the Kleisli convolution and whose unit is $u_X:\bfone \rightsquigarrow E_X$.
Here, $u_X$ satisfies $u_X(\bullet) = \{(x,x):x\in X\}$.
\end{lemma}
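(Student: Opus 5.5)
The plan is to obtain the lemma as a special case of Theorem~\ref{thm:Kl convol-semigp}, applied with $\calC=\Set$, $\scrT=\scrP$, $X=\bfone$ carrying its canonical comonoid structure, and $Y=E_X$. To do this I will check three things in turn. First, $\scrP$ is a commutative monoidal monad. Second, $(E_X,m_X,u_X)$ is a monoid object of $\Kl(\scrP)$. Third, the convolution in Theorem~\ref{thm:Kl convol-semigp} agrees with the formula displayed before the lemma, and its unit $u_X\compos_{\Kl}\varepsilon_{\bfone}$ is $u_X$ itself.

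\textbf{Monoidal structure.} I take the Fubini map $\vartheta_{A,B}\colon \calP(A)\times\calP(B)\to\calP(A\times B)$, $(U,V)\mapsto U\times V$. Naturality in $A$ and $B$ holds because $f(U)\times g(V)=(f\times g)(U\times V)$. The associativity and unit coherences hold because $(U\times V)\times W$ and $U\times(V\times W)$ correspond under the associator of $\Set$, and $\{\bullet\}\times V\cong V$. Compatibility with $\eta^{\calP}$ holds since $\{a\}\times\{b\}=\{(a,b)\}$. Compatibility with $\mu^{\calP}$ is the identity
\[
\Big(\bigcup_{U\in\mathcal U}U\Big)\times\Big(\bigcup_{V\in\mathcal V}V\Big)=\bigcup_{(U,V)\in\mathcal U\times\mathcal V}U\times V .
\]
Commutativity follows from symmetry of the Cartesian product. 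Each of these checks is a one-line set identity.

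\textbf{Monoid object.} Here $m_X((i,j),(p,q))$ is $\{(i,q)\}$ if $j=p$ and $\emptyset$ otherwise, and $u_X(\bullet)=\{(x,x):x\in X\}$. I will compute directly rather than appeal to Lemma~\ref{lemm:monoid object of MatS}, since $\calP$ is $\semimod_{\mathbb B}$ only for finite supports.
\begin{itemize}
\item Both sides of the associativity square send $((i,j),(p,q),(r,s))$ to $\{(i,s)\}$ when $j=p$ and $q=r$, and to $\emptyset$ otherwise. The Kleisli composite only takes unions of these values, so the square commutes.
\item For the left unit, $(\bullet,(i,j))$ is sent to $\bigcup_{x}m_X((x,x),(i,j))=\{(i,j)\}=\eta^{\calP}(\lambda(\bullet,(i,j)))$. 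The right unit is symmetric.
\end{itemize}

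\textbf{Identification of the convolution.} Theorem~\ref{thm:Kl convol-semigp} then says $\KlHom_{\scrP}(X)$ is a monoid. Its multiplication is \eqref{eq:Kl convol-semigp 1} with $\Delta=\Delta_{\bfone}$, which is precisely the composite displayed before the lemma. Unwinding it through the bijection \eqref{iso:Kleisli P} gives
\[
(f\ast g)(\bullet)=\{(i,q): \exists j,\ (i,j)\in f(\bullet),\ (j,q)\in g(\bullet)\},
\]
that is, composition of relations. The unit is $e_\ast=\mu\compos\calP(u_X)\compos\varepsilon_{\bfone}$. Since $\varepsilon_{\bfone}$ is the Kleisli identity $\eta^{\calP}_{\bfone}$, we get $e_\ast(\bullet)=u_X(\bullet)$, the diagonal. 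As a sanity check, the diagonal is visibly a two-sided identity for relational composition.

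\textbf{Main obstacle.} I expect the only delicate point to be the bookkeeping that the abstract Kleisli tensor, $\vartheta$ composed with the product, matches the Fubini map in the displayed formula, together with the fact that the paper's $\Delta$ is written as $\bfone\to\calP(\bfone\times\bfone)$, namely $\eta\compos\lambda^{-1}$. Should there be any doubt there, I would fall back on proving the lemma directly from the relational-composition formula above: associativity and unit laws for relational composition are elementary.
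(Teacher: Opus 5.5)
Your argument is correct, but it runs in the opposite direction from the paper's. The paper does not invoke Theorem~\ref{thm:Kl convol-semigp} here.

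\textbf{The paper's route.} It transposes each $f\in\KlHom_{\scrP}(X)$ to the Kleisli endo-arrow $\widehat f\colon X\rightsquigarrow X$, $\widehat f(x)=\{y:(y,x)\in f(\bullet)\}$, and checks that $f\mapsto\widehat f$ is a bijection onto $\Hom_{\Kl(\scrP)}(X,X)$. From the explicit formula for $\ast$ it computes $\widehat{f\ast g}=\widehat f\compos_{\Kl}\widehat g$ and $\widehat{u_X}=\eta^{\calP}_X$. Associativity and the unit laws are then inherited from the category axioms of $\Kl(\scrP)$. This is essentially your fallback, since relational composition is exactly Kleisli composition in $\Kl(\scrP)$.

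\textbf{Your route.} You specialize the general convolution theorem. That obliges you to verify that $\scrP$ is a commutative monoidal monad and that $(E_X,m_X,u_X)$ is a monoid object of $\Kl(\scrP)$. The paper only asserts the latter (``parallel to Lemma~\ref{lemm:monoid object of MatS}'') without spelling out $m_X$, and your direct checks of both are right. You are also right that you cannot simply cite the $\semimod_S$ lemma, because $\calP$ and $\semimod_{\mathbb B}$ differ on infinite subsets.

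\textbf{What each buys.} Your approach shows the lemma is a genuine instance of Theorem~\ref{thm:Kl convol-semigp}, and it fills in the monoid-object claim the paper leaves implicit. The paper's approach is more economical: it needs no coherence axioms for the Fubini map $\vartheta$, only the composite formula and the fact that $\Kl(\scrP)$ is a category. It also gives an extra conclusion, a monoid isomorphism $\KlHom_{\scrP}(X)\cong\End_{\Kl(\scrP)}(X)$ with the monoid of binary relations on $X$.
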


\begin{proof}
Let $f:\bfone\rightsquigarrow E_X$ be an element of $\KlHom_{\scrP}(X)$.
By Definition \ref{def:Kleisli cat}, $f$ is a map $\bfone \to \calP(E_X)$ in $\Set$.
Since $\bfone=\{\bullet\}$ is a singleton set,
the map $f$ is uniquely determined by the subset $f(\bullet)$ of $E_X=X\times X$.

For each $x\in X$, consider all the elements of $f(\bullet)$
($\subseteq E_X = X\times X$) whose second coordinate is $x$.
Their first coordinates form the subset $\{y\in X:(y,x)\in f(\bullet)\}$ of $X$.
Therefore, we can define a map $\widehat{f}:X\to\calP(X)$ by $\widehat{f}(x)=\{y\in X:(y,x)\in f(\bullet)\}$.
Clearly, $\widehat{f}$ is a Kleisli arrow $X\rightsquigarrow X$,
and then we obtain a correspondence
\begin{center}
  $\KlHom_{\scrP}(X) \to \Hom_{\Kl(\scrP)}(X,X)$, $f\mapsto \widehat{f}$,
\end{center}
which is a bijection. Indeed:
\begin{itemize}
  \item  Let $f,g\in\KlHom_{\scrP}(X)$ and suppose that $\widehat{f}=\widehat{g}$. For any $(y,x)\in X\times X$, we have
    \[ (y,x)\in f(\bullet) ~\Longleftrightarrow~ y\in\widehat{f}(x)
    ~\Longleftrightarrow~ y\in\widehat{g}(x) ~\Longleftrightarrow~ (y,x)\in g(\bullet). \]
    It follows that $f(\bullet)=g(\bullet)$. Since $\bfone$ is a singleton set, we have $f=g$, and so $f\mapsto\widehat{f}$ is injective.
  \item On the other hand, let $a:X\rightsquigarrow X$ be an arbitrary Kleisli arrow in $\Hom_{\Kl(\scrP)}(X,X)$.
    Define $f_a:\bfone\to\calP(E_X)$ by $f_a(\bullet) = \{(y,x)\in X\times X:y\in a(x)\}$. Then $f_a$ is a Kleisli arrow $\bfone \rightsquigarrow E_X$ in $\KlHom_{\scrP}(X)$.
    For every $x\in X$, we have
        \[ \widehat{f_a}(x) = \{y\in X:(y,x)\in f_a(\bullet)\} = \{y\in X:y\in a(x)\} = a(x).\]
Therefore, $\widehat{f_a}=a$. Hence, $f\mapsto\widehat{f}$ is surjective.
\end{itemize}

Next we show that Kleisli convolution is associative.
For any $f,g\in\KlHom_{\scrP}(X)$, by the definitions of the Kleisli convolution and $m_X$,
for every $x\in X$ we have
\begin{align}\label{eq:KlHom-monoid}
 \widehat{f\ast g}(x)
 &=\{z\in X:(z,x)\in(f\ast g)(\bullet)\} \nonumber \\
 &=\{z\in X:\text{there is some }y\in X\text{ such that }
       (z,y)\in f(\bullet)\text{ and }(y,x)\in g(\bullet)\} \nonumber \\
 &=\bigcup_{y\in\widehat{g}(x)}\widehat{f}(y) \nonumber \\
 &=(\widehat{f}\compos_{\Kl}\widehat{g})(x).
\end{align}
Hence, $\widehat{f\ast g} = \widehat{f}\compos_{\Kl}\widehat{g}$.
Then for any $f,g,h\in\KlHom_{\scrP}(X)$, we have
\begin{align*}
  \widehat{(f\ast g)\ast h} = (\widehat{f}\compos_{\Kl}\widehat{g}) \compos_{\Kl}\widehat{h}
= \widehat{f}\compos_{\Kl} (\widehat{g}\compos_{\Kl}\widehat{h})
= \widehat{f\ast(g\ast h)}.
\end{align*}
Since the correspondence $f\mapsto\widehat{f}$ is a bijection,
it follows that $(f\ast g)\ast h=f\ast(g\ast h)$.

Moreover, for $u_X(\bullet)=\{(x,x):x\in X\}$, we have $\widehat{u_X}(x)=\{x\}=\eta_X^{\calP}(x)$,
i.e., $\widehat{u_X}=\eta_X^{\calP}$. Furthermore,
\[ \widehat{u_X\ast f} = \eta_X^{\calP}\compos_{\Kl}\widehat{f}
= \widehat{f} = \widehat{f}\compos_{\Kl}\eta_X^{\calP} = \widehat{f\ast u_X}.\]
The bijectivity of $f\mapsto\widehat{f}$ gives
$u_X\ast f=f=f\ast u_X$. Therefore, $\KlHom_{\scrP}(X)$ is a monoid whose multiplication is the Kleisli convolution and whose unit is $u_X$.
\end{proof}

In the proof of Lemma \ref{lemm:KlHom-monoid}, we show that the convolution has an explicit form
since a Kleisli arrow $f:\bfone=\{\bullet\} \to \calP(E_X)$ is uniquely determined by the subset $f(\bullet)$ of $E_X$.
To be precise, if $f(\bullet)=A$ and $g(\bullet)=B$, then the equation \eqref{eq:KlHom-monoid} shows that
\[ (f\ast g)(\bullet)
 = \{(i,q)\in X\times X\mid
   (i,j)\in A\text{ and }(j,q)\in B
   \text{ for some }j\in X\}.\]
The bijection \eqref{iso:Kleisli P} implies that $\calP(E_X)$ can be seen as a monoid whose multiplication is induced by using the Kleisli convolution in $\Hom_{\Kl(\scrP)}(\bfone,E_X)$,
i.e., for any two subsets $A, B \subseteq E_X$, we can find two Kleisli arrows $f, g\in \KlHom_{\scrP}(X)$ with $f(\bullet)=A$ and $g(\bullet)=B$, and define $A\ast B = \Klcorresp_{\scrP,X}(f \ast g)$ by using \eqref{iso:Kleisli P}.
Thus, we have
\begin{align}\label{eq:sets Kl convol}
A\ast B &= \Klcorresp_{\scrP,X}(f \ast g) = (f\ast g)(\bullet) 
\end{align}

\begin{lemma}\label{lemm:units-power-convolution}
Keep the notation from \ref{lemm:KlHom-monoid}. The invertible elements of $\KlHom_{\scrP}(X)$ are precisely the Kleisli arrows $f:\bfone \rightsquigarrow E_X$ of the form $f(\bullet)=\{(\sigma(x),x)\mid x\in X\}$,
where $\sigma$ is a permutation of $X$. Moreover, the permutation $\sigma$ is uniquely determined by $f$.
\end{lemma}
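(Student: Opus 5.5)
The natural route is to transport the question along the bijection $f\mapsto\widehat{f}$ from the proof of Lemma \ref{lemm:KlHom-monoid}. That proof shows this correspondence $\KlHom_{\scrP}(X)\to\Hom_{\Kl(\scrP)}(X,X)$ is bijective. By \eqref{eq:KlHom-monoid} it satisfies $\widehat{f\ast g}=\widehat{f}\compos_{\Kl}\widehat{g}$, and it also satisfies $\widehat{u_X}=\eta^{\calP}_X$. It is therefore a monoid isomorphism from $(\KlHom_{\scrP}(X),\ast,u_X)$ onto the endomorphism monoid $\Hom_{\Kl(\scrP)}(X,X)$ under Kleisli composition. A monoid isomorphism restricts to an isomorphism of unit groups. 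Hence $f$ is invertible in $\KlHom_{\scrP}(X)$ if and only if $\widehat{f}$ is an invertible Kleisli arrow $X\rightsquigarrow X$.

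Next I invoke the computation made just before Lemma \ref{lemm:KlHom-monoid}, namely $\mathrm{Iso}_{\Kl(\scrP)}(X,X)\cong\mathrm{Sym}(X)$. It says that $\widehat{f}$ is invertible exactly when there is a permutation $\sigma=\bar{\widehat f}$ of $X$ with $\widehat{f}(x)=\{\sigma(x)\}$ for every $x$. By definition $\widehat{f}(x)=\{y:(y,x)\in f(\bullet)\}$. So the condition $\widehat f(x)=\{\sigma(x)\}$ for all $x$ is equivalent to $f(\bullet)=\{(\sigma(x),x)\mid x\in X\}$. This gives the forward direction.

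For the converse, take $f(\bullet)=\{(\sigma(x),x)\}$ and let $g(\bullet)=\{(\sigma^{-1}(x),x)\}$. The explicit formula for $A\ast B$ following \eqref{eq:KlHom-monoid} shows directly that $f\ast g=u_X=g\ast f$. Alternatively, this follows from the converse part of the $\mathrm{Sym}(X)$ argument.

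Uniqueness of $\sigma$ holds because the set $\{(\sigma(x),x)\}$ is the graph of $\sigma$ read with the coordinates swapped. Thus $\sigma(x)$ is the unique $y$ with $(y,x)\in f(\bullet)$.

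There is no real obstacle. The only point needing care is to confirm that $f\mapsto\widehat{f}$ preserves the unit and multiplication, so that units correspond to units. Both facts are already established in the proof of Lemma \ref{lemm:KlHom-monoid}.
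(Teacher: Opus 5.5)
Your proof is correct, but it is organized differently from the paper's. The paper never passes through $\widehat{f}$. It takes subsets $A,B\subseteq X\times X$ with $A\ast B=B\ast A=u_X(\bullet)$ and works directly with the explicit convolution formula. From $(x,x)\in A\ast B$ it gets some $y$ with $(x,y)\in A$ and $(y,x)\in B$. It then uses $B\ast A=u_X(\bullet)$ to show this $y$ is unique. Hence $A$ and $B$ are graphs of mutually inverse maps $x\mapsto\underline{x}$ and $x\mapsto\overline{x}$. The converse is the same direct computation you give.

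You instead observe that $f\mapsto\widehat{f}$ is a monoid isomorphism $(\KlHom_{\scrP}(X),\ast,u_X)\cong(\Hom_{\Kl(\scrP)}(X,X),\compos_{\Kl},\eta^{\calP}_X)$. Both ingredients, the multiplicativity \eqref{eq:KlHom-monoid} and $\widehat{u_X}=\eta^{\calP}_X$, are indeed established in Lemma \ref{lemm:KlHom-monoid}. So units correspond to units, and you then quote the computation $\mathrm{Iso}_{\Kl(\scrP)}(X,X)\cong\mathrm{Sym}(X)$ made just before that lemma.

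Your route is shorter and more conceptual. It shows the lemma is just the statement that the invertible binary relations on $X$ are exactly the permutations. It also avoids repeating, in transposed form, the combinatorial argument already given for invertible Kleisli endomorphisms. The paper's route is self-contained at the level of subsets of $X\times X$. It does not rely on the $\mathrm{Iso}_{\Kl(\scrP)}(X,X)\cong\mathrm{Sym}(X)$ computation, which appears only in running text rather than as a numbered result.
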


\begin{proof}
Suppose that the subsets $A,B\subseteq X\times X$ determine two mutually inverse elements of $\KlHom_{\scrP}(X)$.
Then we have
\begin{align}\label{eq:units-power-convolution}
  B\ast A = A\ast B = \Klcorresp_{\scrP,X}(u_X) = u_X(\bullet) = \{(x,x)\mid x\in X\}
\end{align}
by Lemma \ref{lemm:KlHom-monoid}.
For every $x\in X$, by \eqref{eq:sets Kl convol}, $(x,x)\in A\ast B$ shows that
there is some $y\in X$ such that $(x,y)\in A$ and $(y,x)\in B$.
This element $y$ is unique. Indeed, if $(x,y')\in A$, then $(y,x)\in B$ and $(x,y')\in A$ imply that $(y,y')\in B\ast A$.
Since $B\ast A = u_X(\bullet)$, we obtain $y=y'$.
Hence, for every $x\in X$, there is a unique element $\underline{x}\in X$ such that $(x,\underline{x})\in A$.
In the same way, for every $x\in X$, there is a unique element $\overline{x}\in X$ such that $(x,\overline{x})\in B$.
By \eqref{eq:units-power-convolution}, we have
\[ \overline{~\underline{x}~}=x \quad \text{and} \quad \underline{~\overline{x}~}=x.\]
Thus, $\underline{?}: x\mapsto \underline{x}$ and $\overline{?}: x\mapsto \overline{x}$ are bijections defined on $X$.
Taking $\sigma=\overline{?}={\underline{?}}^{-1}$ and replacing $x$ by $\overline{x}$, we obtain
\[A=\{(\sigma(x),x)\mid x\in X\}.\]
The permutation $\sigma$ is clearly uniquely determined by $A$.

Conversely, let $\sigma$ be a permutation of $X$, and let $A=\{(\sigma(x),x)\mid x\in X\}$ and $B=\{(\sigma^{-1}(x),x)\mid x\in X\}$. It follows directly from the definition of Kleisli convolution that
\[ A\ast B=\{(x,x)\mid x\in X\} \quad\text{and}\quad B\ast A=\{(x,x)\mid x\in X\}.\]
Therefore, the Kleisli arrow determined by $A$ is invertible.
\end{proof}

Lemmas \ref{lemm:KlHom-monoid} and \ref{lemm:units-power-convolution} show that $\KlHom_{\scrP}(X)$ is a monoid
and provide a description of invertible Kleisli arrows in $\KlHom_{\scrP}(X)$.
Now, we can introduce a new representation for each group $G$ by using the Kleisli category $\Kl(\scrP)$.

\begin{definition}\rm \label{def:Kl convol rep gp} \
\begin{itemize}
  \item[(1)] Let $G$ be a group. A \defines{{\rm(}finite{\rm)} Kleisli convolution representation} of $G$ on a finite set $X$ is a monoid homomorphism
    \[ \rho:G \to \KlHom_{\scrP}(X) = \Hom_{\Kl(\scrP)}(\bfone,E_X).\]
    Equivalently, it is a pair $(X,\rho)$ given by a finite set $X$ and a monoid homomorphism $\rho:G \to \KlHom_{\scrP}(X)$.
  \item[(2)] Since every $g\in G$ is invertible, Lemma \ref{lemm:units-power-convolution} shows that there is a \textbf{unique} permutation $\sigma_{X,g}$ of $X$ such that
    \[\rho(g)(\bullet) = \{(\sigma_{X,g}(x),x)\mid x\in X\}.\]
    Let $(X,\rho_X)$ and $(Y,\rho_Y)$ be two Kleisli convolution representations of $G$.
    A \defines{homomorphism} from $(X,\rho_X)$ to $(Y,\rho_Y)$ is a map $f:X\to Y$ satisfying
    \[ f(\sigma_{X,g}(x)) = \sigma_{Y,g}(f(x)) \text{ for all } g\in G \text{ and } x\in X.\]
    In other words, for every $g\in G$, the following diagram is commutative:
    \[
     \xymatrix{
     X \ar[r]^{\sigma_{X,g}}\ar[d]_{f}
       & X\ar[d]^{f} \\
     Y \ar[r]_{\sigma_{Y,g}}
       & Y.
     } \]
\end{itemize}
\end{definition}

\begin{theorem}\label{thm:KLcrep gp}
Let $G$ be a group. All finite Kleisli convolution representations of $G$ and all homomorphisms between two Kleisli convolution representations of $G$ form a category $\KCrep(G)$.
\end{theorem}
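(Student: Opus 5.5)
To prove Theorem \ref{thm:KLcrep gp}, I would check the category axioms directly. Objects are the pairs $(X,\rho_X)$ and morphisms are the homomorphisms of Definition \ref{def:Kl convol rep gp}(2). The only ingredient beyond routine bookkeeping is that the permutations $\sigma_{X,g}$ are well defined. By Lemma \ref{lemm:units-power-convolution}, for each $g\in G$ the element $\rho_X(g)$ is invertible in $\KlHom_{\scrP}(X)$, since $\rho_X$ is a monoid homomorphism and $g$ has an inverse. So $\rho_X(g)(\bullet)=\{(\sigma_{X,g}(x),x)\mid x\in X\}$ for a \emph{unique} permutation $\sigma_{X,g}$. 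The morphism condition $f\compos\sigma_{X,g}=\sigma_{Y,g}\compos f$ is therefore an unambiguous property of a set map $f\colon X\to Y$.

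Next I would define composition to be ordinary composition of maps. If $f_1\colon(X,\rho_X)\to(Y,\rho_Y)$ and $f_2\colon(Y,\rho_Y)\to(Z,\rho_Z)$ are homomorphisms, then for each $g\in G$ pasting the two commutative squares gives
\[ (f_2\compos f_1)\compos\sigma_{X,g}=f_2\compos\sigma_{Y,g}\compos f_1=\sigma_{Z,g}\compos(f_2\compos f_1). \]
This is the same computation already carried out for $\Perm(G)$. The identity map $\id_X$ trivially satisfies $\id_X\compos\sigma_{X,g}=\sigma_{X,g}\compos\id_X$, so it is a homomorphism. Associativity of composition and the unit laws are inherited from $\Set$, because the hom-sets of $\KCrep(G)$ are subsets of $\Hom_{\Set}(X,Y)$ closed under composition and containing the identities. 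Thus $\KCrep(G)$ is a (non-full) subcategory-like structure built over $\Set$, and the axioms hold.

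Optionally, for later use, I would record that $g\mapsto\sigma_{X,g}$ is a group homomorphism $G\to\mathrm{Sym}(X)$. The proof of Lemma \ref{lemm:KlHom-monoid} gives $\widehat{f\ast h}=\widehat f\compos_{\Kl}\widehat h$, and $\widehat{\rho_X(g)}(x)=\{\sigma_{X,g}(x)\}$. Hence $\sigma_{X,gh}=\sigma_{X,g}\compos\sigma_{X,h}$, and $\sigma_{X,1}=\id_X$ because $\rho_X(1)=u_X$. This is not needed for the category axioms themselves.

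I expect no real obstacle. The one point needing care is the well-definedness of the morphism condition, which rests on the uniqueness clause of Lemma \ref{lemm:units-power-convolution}; everything else is a formal check.
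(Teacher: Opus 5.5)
Your proposal is correct and follows essentially the same route as the paper's proof: composition is ordinary composition of set maps, justified by pasting the commutative squares, with $\id_X$ as the identity homomorphism and associativity and the unit laws inherited from $\Set$. Your extra remarks on the well-definedness of $\sigma_{X,g}$ (via the uniqueness in Lemma \ref{lemm:units-power-convolution}) and on $g\mapsto\sigma_{X,g}$ being a homomorphism are accurate. The paper records the first in Definition \ref{def:Kl convol rep gp}(2) and proves the second in Theorem \ref{thm:KCrep-perm rep}.
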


\begin{proof}
%
Let $(X,\rho_X)$, $(Y,\rho_Y)$ and $(Z,\rho_Z)$ be three finite Kleisli convolution representations of $G$,
and let $f_1:(X,\rho_X)\to(Y,\rho_Y)$ and $f_2:(Y,\rho_Y)\to(Z,\rho_Z)$ be two homomorphisms.
Then they are maps $f_1:X\to Y$ and $f_2:Y\to Z$ satisfying $f_1\compos\sigma_{X,g}=\sigma_{Y,g}\compos f_1$
and $f_2\compos\sigma_{Y,g}=\sigma_{Z,g}\compos f_2$, respectively.
Thus, we have the following diagram
\[\xymatrix{
 X \ar[r]^{\sigma_{X,g}}\ar[d]_{f_1} \ar@/_2pc/[dd]_{f_2\compos f_1}
   & X\ar[d]^{f_1} \ar@/^2pc/[dd]^{f_2\compos f_1} \\
 Y \ar[r]^{\sigma_{Y,g}}\ar[d]_{f_2}
   & Y\ar[d]^{f_2} \\
 Z \ar[r]^{\sigma_{Z,g}}
   & Z.
} \]
commutes. It follows that $(f_2\compos f_1)\compos \sigma_{X,g} = \sigma_{Z,g}\compos(f_2\compos f_1)$,
i.e., $f_2\compos f_1$ is a homomorphism from $(X,\rho_X)$ to $(Z,\rho_Z)$.
The composition of homomorphisms is well-defined.

For every object $(X,\rho_X)$, the identity map $\id_X:X\to X$
satisfies $\id_X\compos\sigma_{X,g} = \sigma_{X,g} = \sigma_{X,g}\compos\id_X$ for every $g\in G$.
Then $\id_X$ is a homomorphism from $(X,\rho_X)$ to itself, and it is the identity homomorphism of $(X,\rho_X)$.

Finally, the composition of homomorphisms is associative because it is the usual composition of maps.
Moreover, it is clear to see that $\id_Y\compos f=f=f\compos\id_X$ for every homomorphism $f:(X,\rho_X)\to(Y,\rho_Y)$.
Therefore, the objects, homomorphisms, compositions and identity homomorphisms defined above satisfy all the axioms of a category. Hence, they form the category $\KCrep(G)$.
\end{proof}

Theorem \ref{thm:KLcrep gp} is undoubtedly important in this paper. This theorem shows that we can define the category of Kleisli convolution representations of groups.

\begin{definition}\rm
A \defines{Kleisli convolution representation category} $\KCrep(G)$ of a group $G$ is a category whose objects are Kleisli convolution representations of $G$ and whose morphisms are homomorphisms between two Kleisli convolution representations of $G$.
\end{definition}

\begin{theorem}\label{thm:KCrep-perm rep}
There is an equivalence of categories $\KCrep(G) \simeq \Perm(G)$.
\end{theorem}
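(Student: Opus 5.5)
We will construct an explicit functor $F\colon \KCrep(G)\to\Perm(G)$ and show that it is an isomorphism of categories, which gives in particular the required equivalence. On objects, send $(X,\rho_X)$ to $(X,\sigma_X)$, where $\sigma_X(g):=\sigma_{X,g}$ is the permutation supplied by Lemma \ref{lemm:units-power-convolution} (see Definition \ref{def:Kl convol rep gp}(2)). On morphisms, $F$ is the identity on underlying maps. By Definition \ref{def:Kl convol rep gp}(2), a homomorphism $f\colon (X,\rho_X)\to(Y,\rho_Y)$ is exactly a map with $f\compos\sigma_{X,g}=\sigma_{Y,g}\compos f$, which is precisely a $G$-homomorphism $(X,\sigma_X)\to(Y,\sigma_Y)$. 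So $F$ is well defined on morphisms, preserves composition and identities, and is fully faithful, provided it is well defined on objects.

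The main step is to check that $g\mapsto\sigma_{X,g}$ is a group homomorphism $G\to\mathrm{Sym}(X)$. Write $A_\sigma:=\{(\sigma(x),x)\mid x\in X\}\subseteq E_X$. By the explicit formula for $A\ast B$ after Lemma \ref{lemm:KlHom-monoid}, the product $A_\sigma\ast A_\tau$ is the set of pairs $(i,q)$ with $(i,j)\in A_\sigma$ and $(j,q)\in A_\tau$ for some $j$. Here $j=\tau(q)$ and $i=\sigma(\tau(q))$, so $A_\sigma\ast A_\tau=A_{\sigma\tau}$. Since $\rho_X$ is a monoid homomorphism, we get $A_{\sigma_{X,gh}}=\rho_X(gh)(\bullet)=A_{\sigma_{X,g}}\ast A_{\sigma_{X,h}}=A_{\sigma_{X,g}\sigma_{X,h}}$. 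The uniqueness clause of Lemma \ref{lemm:units-power-convolution} then gives $\sigma_{X,gh}=\sigma_{X,g}\sigma_{X,h}$. Similarly, $\rho_X(1)=u_X$ and $u_X(\bullet)=A_{\id_X}$, so $\sigma_{X,1}=\id_X$.

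For surjectivity on objects, take $(X,\sigma)\in\Perm(G)$ and define $\rho(g)$ by $\rho(g)(\bullet)=A_{\sigma(g)}$, using the bijection \eqref{iso:Kleisli P}. The identity $A_\sigma\ast A_\tau=A_{\sigma\tau}$ together with $A_{\id_X}=u_X(\bullet)$ shows that $\rho$ is a monoid homomorphism $G\to\KlHom_{\scrP}(X)$. By the uniqueness in Lemma \ref{lemm:units-power-convolution}, its associated permutations are $\sigma_{X,g}=\sigma(g)$. Thus $F(X,\rho)=(X,\sigma)$, and these two constructions are mutually inverse on objects.

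Hence $F$ is bijective on objects and fully faithful, so it is an isomorphism of categories. The only step needing care is the computation $A_\sigma\ast A_\tau=A_{\sigma\tau}$ and checking that its order convention matches composition in $\mathrm{Sym}(X)$; with the paper's convention $(\sigma(x),x)$, it does.
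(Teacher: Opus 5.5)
Your proposal is correct and follows essentially the same route as the paper. You extract $\sigma_{X,g}$ via Lemma \ref{lemm:units-power-convolution}, use $A_\sigma\ast A_\tau=A_{\sigma\tau}$ together with uniqueness to get a group homomorphism $G\to\mathrm{Sym}(X)$, take the identity on underlying maps for morphisms, and build the inverse via $\rho(g)(\bullet)=\{(g\cdot x,x)\mid x\in X\}$; this yields an isomorphism of categories, which is exactly what the paper concludes.
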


\begin{proof}
Let $(X,\rho)$ be a Kleisli convolution representation of $G$. Then we have a monoid homomorphism
\[ \rho: G \to \KlHom_{\scrP}(X), ~ g\mapsto \big(\rho(g): \bfone \to \calP(E_X), \bullet\mapsto \{(\sigma_{X,g}(x),x):x\in X\} \big).\]
For every $g\in G$, let $\sigma_{X,g}$ be the unique permutation
of $X$ determined by $\rho(g)(\bullet) = \{(\sigma_{X,g}(x),x)\mid x\in X\}$.

Notice that for any two permutations $\sigma$ and $\tau$ of $X$, we have
$\{(\sigma(x),x): x\in X\} \ast \{(\tau(x),x): x\in X\} = \{(\sigma(\tau(x)),x): x\in X\}$
by Kleisli convolution. Thus, for any two elements $g,h\in G$, we have
\begin{align*}
  \rho(g)(\bullet) \ast \rho(h)(\bullet)
& = \{((\sigma_{X,g}\compos\sigma_{X,h})(x), x) : x\in X\} \\
& \mathop{=}\limits^{\spadesuit}~ \{(\sigma_{X,gh}(x), x) : x\in X\}\\
& = \rho(gh)(\bullet),
\end{align*}
where $\spadesuit$ holds since $\rho$ is a monoid homomorphism. By the uniqueness in Lemma \ref{lemm:units-power-convolution}, we have $\sigma_{X,gh}=\sigma_{X,g}\compos\sigma_{X,h}$ and $\sigma_{X,e}=\id_X$. Thus, $G\to\mathrm{Sym}(X)$, $g\mapsto\sigma_{X,g}$, is a group homomorphism and gives a left $G$-action $G\times X\to X$, $g\cdot x=\sigma_{X,g}(x)$.
Then we obtain a correspondence
\[ (X, \rho) \mapsto (X, \sigma_X) \]
from the object $(X,\rho)$ in $\KCrep(G)$ to the object $(X,\sigma_X)$ in $\Perm(G)$.

For any homomorphism $f:(X,\rho_X)\to(Y,\rho_Y)$ of Kleisli convolution representations,
we have  $f(g\cdot x)=g\cdot f(x)$ for all $g\in G$ and $x\in X$.
Then, naturally, $f$ is also a morphism of permutation representations,
and we therefore obtain a correspondence
\begin{align} \label{eq:KCrep-perm rep-morphism}
  \Hom_{\KCrep(G)}((X,\rho_X), (Y,\rho_Y)) & \to \Hom_{\Perm(G)}((X,\sigma_X), (Y,\sigma_Y)) \\
  f & \mapsto f \nonumber
\end{align}
Thus, we obtain a functor
\[F: \KCrep(G) \to \Perm(G)\]
sending each Kleisli convolution representation $(X,\rho_X)$ of $G$ to a permutation representation $(X,\sigma_X)$ of $G$.

Conversely, let $(X, \sigma_X: g\mapsto (x\mapsto g\cdot x))$ be an arbitrary finite permutation representation of $G$.
For every $g\in G$, define $\rho_X(g):\bfone \to \calP(E_X)$ by $\rho_X(g)(\bullet) = \{(g\cdot x,x)\mid x\in X\}$.
For $g,h\in G$, we have
\[(\rho_X(g)\ast\rho_X(h))(\bullet)= \{(g\cdot(h\cdot x),x)\mid x\in X\}
 = \{((gh)\cdot x,x)\mid x\in X\}
 = \rho_X(gh)(\bullet).\]
Moreover, $\rho_X(e)(\bullet) = \{(x,x)\mid x\in X\} = u_X(\bullet)$.
Then $\rho_X:G\to\KlHom_{\scrP}(X)$ is a monoid homomorphism.
Therefore, any permutation representation $(X,\sigma_X)$ induces a Kleisli convolution representation $(X,\rho_X)$.
Notice that \eqref{eq:KCrep-perm rep-morphism} is a bijection, whose inverse induces a correspondence from the morphisms in $\Perm(G)$ to the morphisms in $\KCrep(G)$.
Then we obtain a functor
\[ G: \Perm(G)\longrightarrow\KCrep(G).\]
One can check that the two constructions are inverse to each other on both objects and morphisms.
Thus, we have an equivalence $\KCrep(G)\cong\Perm(G)$ of categories.
\end{proof}

\subsection{Linear Kleisli convolution representations} \label{subsect:linear Kl convol rep}

Let $\kk$ be a field. Recall that \eqref{defeq:semimod} shows that
\[ \semimod_{\kk}(X)
 = \bigg\{ \sum_{x\in X}a_x[x]\ :\
     a_x\in\kk,\ a_x=0\text{ for all but finitely many }x
   \bigg\} \cong \bigoplus_{x\in X}\kk\]
holds for every set $X$; and for each map $f:X\to Y$, applying $\semimod_{\kk}$ to $f$, we obtain
\[ \semimod_{\kk}(f): \semimod_{\kk}(X)\to \semimod_{\kk}(Y), \quad
 \sum_{x\in X}k_x[x] \mapsto  \sum_{x\in X}k_x[f(x)] \quad (k_x\in\kk). \]
Then, by Lemma \ref{lemm:semimod-monad}, we have a monad $\scrM_{\kk}=(\semimod_{\kk}, \eta^{\semimod_{\kk}}, \mu^{\semimod_{\kk}})$ on $\Set$ since each vector space is also a free $\kk$-semimodule.
Here:
\begin{itemize}
  \item $\eta^{\semimod_{\kk}}: \id_{\Set} \to \semimod_{\kk}$ is the natural transformation defined by $\eta_X^{\semimod_{\kk}}(x)=[x]$;
  \item and $\mu^{\semimod_{\kk}}: \semimod_{\kk}^2 \to \semimod_{\kk}$ is the natural transformation defined by
    \[\mu_X^{\semimod_{\kk}} \bigg(\sum\limits_{\ell=1}^{n}c_{\ell}
   \bigg[\sum\limits_{x\in X}a_{\ell,x}[x]\bigg]\bigg)
 = \sum\limits_{x\in X}\bigg(\sum\limits_{\ell=1}^{n}c_{\ell}a_{\ell,x}\bigg)[x].\]
\end{itemize}

\begin{definition}\rm
We call the monad $\scrM_{\kk}=(\semimod_{\kk}, \eta^{\semimod_{\kk}}, \mu^{\semimod_{\kk}})$ as above a \defines{free $\kk$-module monad} or a \defines{$\kk$-linear space monad}.
\end{definition}

By the above free $\kk$-module monad $\scrM_{\kk}=(\semimod_{\kk}, \eta^{\semimod_{\kk}}, \mu^{\semimod_{\kk}})$,
we obtain a Kleisli category $\Kl(\scrM_{\kk})$. Then, for finite sets $X$ and $Y$, a Kleisli arrow $f:X \rightsquigarrow Y$ in $\Kl(\scrM_{\kk})$ can be written as a function
\[ f(x)=\sum_{y\in Y}f_{y,x}[y], \quad x\in X. \]
Thus, $f$ is represented by the matrix $[f]=(f_{y,x})_{y\in Y,\ x\in X}$.
If $f:X\to\semimod_{\kk}(Y)$ and $h: Y\to\semimod_{\kk}(Z)$ are Kleisli arrows, then we have
\begin{align}\label{eq:linear-Kleisli-composition}
 [h\compos_{\Kl}f]=[h][f].
\end{align}
Therefore, Kleisli composition is ordinary matrix multiplication. See Lemma \ref{fact:Kl(scrM) is Mat}.

By Lemma \ref{lemm:monoid object of MatS}, the Kleisli category $\Kl(\scrM_{\kk})$ has a monoid object
\[(E_X:=X\times X, m_X:E_X\times E_X \rightsquigarrow E_X, u_X: \bfone \rightsquigarrow E_X)\]
where:
\begin{itemize}
  \item $m_X$, as a map $E_X\times E_X\longrightarrow\semimod_{\kk}(E_X)$, is defined by
\[ m_X((i,j),(p,q))=
 \begin{cases}
   [(i,q)], & j=p;\\
   0, & j\ne p;
 \end{cases}\]

  \item and $u_X$ is defined by $u_X(\bullet) = \sum\limits_{x\in X} [(x,x)]$.
\end{itemize}
Hence, the following result shows that $\KlHom_{\kk}(X) := \Hom_{\Kl(\scrM_{\kk})}(\bfone,E_X)$
is a $\kk$-algebra under Kleisli convolution.

\begin{lemma} \label{lemm:KlHom-alg}
Let $X$ be a finite set. The Kleisli Hom-space $\KlHom_{\kk}(X)$ is a $\kk$-algebra whose multiplication is given by the Kleisli convolution and whose unit is $u_X:\bfone\rightsquigarrow E_X$.
Furthermore, we have the following isomorphism of $\kk$-algebras
\[ \Phi_X:\KlHom_{\kk}(X)\longrightarrow\Mat_X(\kk), \quad \Phi_X(f)=(f_{i,j})_{i,j\in X},\]
where $f(\bullet)=\sum\limits_{(i,j)\in X\times X}f_{i,j}[(i,j)]$.
Under this isomorphism, Kleisli convolution is ordinary matrix multiplication.
\end{lemma}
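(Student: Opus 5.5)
This lemma is the specialization $S=\kk$ of Proposition \ref{prop:matrix-convolution}, so the plan is to make that specialization explicit rather than redo the monoid axioms by hand. The free $\kk$-module monad $\scrM_{\kk}$ is the free $S$-semimodule monad $\scrM_S$ for the commutative semiring $S=\kk$. Hence $\Kl(\scrM_{\kk})=\Mat_{\kk}$, and the triple $(E_X,m_X,u_X)$ above is exactly the monoid object of Lemma \ref{lemm:monoid object of MatS}. Applying Theorem \ref{thm:Kl convol-semigp} with $X=\bfone$, endowed with its canonical comonoid structure $\Delta_{\bfone}\colon\bfone\simeq\bfone\otimes\bfone$, $\varepsilon_{\bfone}=\id_{\bfone}$, shows that $\KlHom_{\kk}(X)$ is a monoid under Kleisli convolution, with unit $u_X\compos_{\Kl}\varepsilon_{\bfone}=u_X$.

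Next I will put the $\kk$-vector space structure on $\KlHom_{\kk}(X)$ via Lemma \ref{lemm:Klcorresp}. That lemma identifies $\KlHom_{\kk}(X)$ with $\semimod_{\kk}(E_X)\cong\kk^{\oplus(X\times X)}$ by $f\mapsto f(\bullet)$. I then transport the addition and scalar multiplication of $\semimod_{\kk}(E_X)$ along this bijection. The map $\Phi_X(f)=(f_{i,j})$ reads off the coefficients of $f(\bullet)$ in the basis $[(i,j)]$. It is therefore a $\kk$-linear bijection onto $\Mat_X(\kk)$, whose inverse sends $(a_{ij})$ to the arrow $\bullet\mapsto\sum a_{ij}[(i,j)]$.

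The only computation is multiplicativity, which I take from the proof of Proposition \ref{prop:matrix-convolution}. There are two inputs. First, by Lemma \ref{fact:Kl(scrM) is Mat}, $(f\otimes_{\Kl}g)(\bullet)=\sum f_{i,j}g_{p,q}[((i,j),(p,q))]$. Second, applying the bilinear extension of $m_X$ kills the terms with $j\ne p$. Together these give
\[ (f\ast g)(\bullet)=\sum_{(i,q)}\Big(\sum_{j\in X}f_{i,j}g_{j,q}\Big)[(i,q)], \]
that is, $\Phi_X(f\ast g)=\Phi_X(f)\Phi_X(g)$. Also $\Phi_X(u_X)=I_X$.

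Since $\Phi_X$ is a linear bijection that preserves multiplication and unit, and $\Mat_X(\kk)$ is a $\kk$-algebra, the transported structure makes $\KlHom_{\kk}(X)$ a $\kk$-algebra. In particular, bilinearity of $\ast$ and its compatibility with scalars follow from the corresponding properties of matrix multiplication, and $\Phi_X$ is an algebra isomorphism.

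The one point needing care is the expression for $f\ast g$ given by \eqref{eq:Kl convol-semigp 1}, namely $\mu\compos T(m)\compos\mu\compos T(\vartheta)\compos T(f\otimes g)\compos\Delta$. I must check that it indeed evaluates to ``apply the $\kk$-bilinear extension of $m_X$ to $\vartheta(f(\bullet),g(\bullet))$''. This holds because $\mu^{\semimod_{\kk}}$ applied after $\semimod_{\kk}(m_X)$ is precisely the linear extension of $m_X$, which is the content of the matrix formula for Kleisli composition.
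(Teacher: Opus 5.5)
Your proposal is correct and follows essentially the same route as the paper. You specialize Proposition \ref{prop:matrix-convolution} to $S=\kk$, transport the $\kk$-linear structure along $f\mapsto f(\bullet)$, and read off that Kleisli convolution becomes matrix multiplication with $u_X\mapsto I_X$. Your extra appeal to Theorem \ref{thm:Kl convol-semigp} for the monoid axioms is harmless but redundant, since associativity and unitality already transfer through the multiplicative bijection $\Phi_X$.
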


\begin{proof}
Since $\bfone=\{\bullet\}$ is a singleton set, every $f\in\KlHom_{\kk}(X)$ ($=\Hom_{\Kl(\scrM_{\kk})}(\bfone,E_X)$) is uniquely determined by $f(\bullet)\in\semimod_{\kk}(E_X)$.
Therefore, $\KlHom_{\kk}(X)$ is a $\kk$-vector space since
\[ (f+g)(\bullet)=f(\bullet)+g(\bullet)
 \quad\text{and}\quad
 (\lambda f)(\bullet)=\lambda f(\bullet) \]
holds for all $f,g\in\KlHom_{\kk}(X)$ and $\lambda\in\kk$.
Here, its zero element is the Kleisli arrow sending $\bullet$ to the zero element of $\semimod_{\kk}(E_X)$.
Furthermore, applying Proposition \ref{prop:matrix-convolution} to $S=\kk$, we obtain an isomorphism of $\kk$-linear spaces
\[ \KlHom_{\kk}(X) = \Hom_{\Kl(\scrM_{\kk})}(\bfone,E_X) \cong \Mat_X(\kk).\]
Under this isomorphism, we have the correspondence $f \mapsto (f_{i,j})_{i,j\in X}$,
the Kleisli convolution corresponds to ordinary matrix multiplication, and $u_X$ corresponds to the identity matrix.
Therefore, $\KlHom_{\kk}(X)$ is a $\kk$-algebra whose multiplication is the Kleisli convolution and whose unit is $u_X$.
\end{proof}

We know that for each group $G$, $\kk G$ is a $\kk$-algebra.
In order to obtain a representation of $\kk G$, we need another $\kk$-algebra $\mathit{\Lambda}$
and establish a homomorphism from $\kk G$ to $\mathit{\Lambda}$.
By Lemma \ref{lemm:KlHom-alg}, $\KlHom_{\kk}(X)$ is a $\kk$-algebra whose multiplication is given by the convolution of Kleisli arrows, and then we can define a new representation of $G$.

\begin{definition}\rm \label{def:linear Kl convol repr gp}\
\begin{enumerate}[label=(\arabic*)]
  \item
Let $G$ be a group and let $\kk G$ be its group algebra.
A \defines{linear Kleisli convolution representation} of $G$ on a
finite set $X$ is a $\kk$-algebra homomorphism
\[ \rho_{\kk}: \kk G \to \KlHom_{\kk}(X) = \Hom_{\Kl(\scrM_{\kk})}(\bfone, E_X).\]
Equivalently, it can be seen as a pair $(X,\rho_{\kk})$ consisting of a finite set $X$ and a $\kk$-algebra homomorphism $\rho_{\kk}$ from $\kk G$ to $\KlHom_{\kk}(X)$.
\label{def:linear Kl convol repr gp 1}

  \item Let $(X,\rho_{\kk, X})$ and $(Y,\rho_{\kk, Y})$ be two linear Kleisli convolution representations of $G$.
A \defines{homomorphism} from $(X,\rho_{\kk,X})$ to $(Y,\rho_{\kk,Y})$ is a Kleisli arrow $f:X \to \semimod_{\kk}(Y)$ such that
\[  \Phi_Y(\rho_{\kk,Y}(a))[f] = [f]\Phi_X(\rho_{\kk,X}(a)) \quad\text{for all~}a\in\kk G. \]
\label{def:linear Kl convol repr gp 2}
\end{enumerate}
\end{definition}

The following result shows that we can construct a category whose objects are linear Kleisli convolution representations of $G$ and whose morphisms are the homomorphisms between two linear Kleisli convolution representations.

\begin{theorem}\label{thm:linear KLcrep gp}
Let $G$ be a group. All linear Kleisli convolution representations of $G$ and all homomorphisms between them form a category $\KCrep_{\kk}(G)$.
\end{theorem}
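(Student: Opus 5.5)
The plan is to check the category axioms directly, translating every statement into matrices via $[-]$ and the isomorphisms $\Phi_X$ of Lemma \ref{lemm:KlHom-alg}. Composition of homomorphisms will be Kleisli composition in $\Kl(\scrM_{\kk})$, and the identity on $(X,\rho_{\kk,X})$ will be the Kleisli identity $\eta^{\semimod_{\kk}}_X$. Both equalities we need are equalities of Kleisli arrows, so it suffices to compare the associated matrices. This is legitimate because, for finite $X$ and $Y$, the assignment $f\mapsto[f]$ is a bijection between $\Hom_{\Kl(\scrM_{\kk})}(X,Y)$ and $Y\times X$ matrices over $\kk$.

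\textbf{Closure under composition.} Let $f\colon(X,\rho_{\kk,X})\to(Y,\rho_{\kk,Y})$ and $h\colon(Y,\rho_{\kk,Y})\to(Z,\rho_{\kk,Z})$ be homomorphisms. By \eqref{eq:linear-Kleisli-composition}, $[h\compos_{\Kl}f]=[h][f]$. Then for every $a\in\kk G$, associativity of matrix multiplication and the two intertwining relations give
\[\Phi_Z(\rho_{\kk,Z}(a))[h][f]=[h]\Phi_Y(\rho_{\kk,Y}(a))[f]=[h][f]\Phi_X(\rho_{\kk,X}(a)),\]
so $h\compos_{\Kl}f$ is again a homomorphism.

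\textbf{Identities.} The matrix $[\eta^{\semimod_{\kk}}_X]$ is the identity matrix $I_X$, since $\eta_X(x)=[x]$. It trivially commutes with every $\Phi_X(\rho_{\kk,X}(a))$, so $\eta^{\semimod_{\kk}}_X$ is a homomorphism.

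\textbf{Associativity and unit laws.} These are inherited from $\Kl(\scrM_{\kk})$, which is a category by the Remark following Definition \ref{def:Kleisli cat}. Equivalently, they follow from associativity of matrix multiplication together with $I_Yf=f=fI_X$.

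I expect no step to be a real obstacle. The only point needing care is bookkeeping: the objects $X$ are finite sets, so matrices are finite and \eqref{eq:linear-Kleisli-composition} applies. Also, the intertwining condition in Definition \ref{def:linear Kl convol repr gp}\ref{def:linear Kl convol repr gp 2} must be stated for the same $a$ on both sides, which makes it stable under composition. I would also remark that Hom-sets are $\kk$-subspaces and composition is bilinear. This is not required for the statement, but it prepares the $\kk$-linear equivalence of Theorem \ref{thm:main-group-KCrep}(2).
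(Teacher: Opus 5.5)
Your proposal is correct and is essentially the paper's own proof. It defines composition as Kleisli composition and gets closure from $[h\compos_{\Kl}f]=[h][f]$ together with the two intertwining relations, takes $\eta^{\semimod_{\kk}}_X$ (with matrix $I_X$) as the identity, and inherits associativity and the unit laws from $\Kl(\scrM_{\kk})$.
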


\begin{proof}
The objects of $\KCrep_{\kk}(G)$ are all linear Kleisli convolution representations $(X,\rho_{\kk,X})$ of $G$.
For two objects $(X,\rho_{\kk,X})$ and $(Y,\rho_{\kk,Y})$, the morphisms from $(X,\rho_{\kk,X})$ to $(Y,\rho_{\kk,Y})$ are the Kleisli arrows $f:X\rightsquigarrow Y$ satisfying
\[ \Phi_Y(\rho_{\kk,Y}(a))[f] = [f]\Phi_X(\rho_{\kk,X}(a)) \]
for every $a\in\kk G$.

We first define the composition of morphisms.
Let $f:(X,\rho_{\kk,X}) \to (Y,\rho_{\kk,Y})$ and $h:(Y,\rho_{\kk,Y}) \to (Z,\rho_{\kk,Z})$ be two homomorphisms.
We define their composition to be the Kleisli composition $h\compos_{\Kl}f:X\rightsquigarrow Z$.
By \eqref{eq:linear-Kleisli-composition}, its matrix is given by $[h\compos_{\Kl}f]=[h][f]$.
For every $a\in\kk G$, we have
\begin{align*}
 \Phi_Z(\rho_{\kk,Z}(a))[h\compos_{\Kl}f]
&=\Phi_Z(\rho_{\kk,Z}(a))[h][f]\\
&=[h]\Phi_Y(\rho_{\kk,Y}(a))[f]\\
&\mathop{=}\limits^{(\ast)}[h][f]\Phi_X(\rho_{\kk,X}(a))\\
&=[h\compos_{\Kl}f]\Phi_X(\rho_{\kk,X}(a)).
\end{align*}
Here, the second equality holds because $h$ is a homomorphism, and $(\ast)$ holds because $f$ is a homomorphism.
Therefore, $h\compos_{\Kl}f$ is a homomorphism from $(X,\rho_{\kk,X})$ to $(Z,\rho_{\kk,Z})$.
Thus, the composition of morphisms is well-defined.

For every object $(X,\rho_{\kk,X})$, consider the Kleisli arrow
$\eta_X^{\semimod_{\kk}}:X\rightsquigarrow X$ which sends each $x\in X$ to $[x]$.
It corresponds to the identity matrix $I_X$. It follows that, for every $a\in\kk G$,
\begin{align*}
 \Phi_X(\rho_{\kk,X}(a))[\eta_X^{\semimod_{\kk}}]
&= \Phi_X(\rho_{\kk,X}(a))I_X \\
&= I_X\Phi_X(\rho_{\kk,X}(a)) \\
&= [\eta_X^{\semimod_{\kk}}]\Phi_X(\rho_{\kk,X}(a)).
\end{align*}
Therefore, $\eta_X^{\semimod_{\kk}}$ is a homomorphism from
$(X,\rho_{\kk,X})$ to itself.

Finally, the composition of morphisms is associative because it is the composition in the Kleisli category $\Kl(\scrM_{\kk})$.
Indeed, for any three composable morphisms $f,h$ and $k$, we have $(k\compos_{\Kl}h)\compos_{\Kl}f= k\compos_{\Kl}(h\compos_{\Kl}f)$. Moreover, we obtain
\[ \eta_Y^{\semimod_{\kk}}\compos_{\Kl}f = f = f\compos_{\Kl}\eta_X^{\semimod_{\kk}}\]
for every homomorphism $f:(X,\rho_{\kk,X})\to(Y,\rho_{\kk,Y})$.

Thus, the objects, morphisms, compositions and identity morphisms defined above satisfy all the axioms of a category.
Therefore, they form the category $\KCrep_{\kk}(G)$.
\end{proof}

\begin{theorem}\label{theo:KC-linear}
There is an equivalence of categories $\KCrep_{\kk}(G)\simeq\rep_{\kk}(G)$.
\end{theorem}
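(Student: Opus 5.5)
I will construct an explicit functor $F:\KCrep_{\kk}(G)\to\rep_{\kk}(G)$ and show that it is fully faithful and essentially surjective.

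\emph{On objects.} Given $(X,\rho_{\kk,X})$, set $V_X:=\semimod_{\kk}(X)=\bigoplus_{x\in X}\kk[x]$, a $\kk$-vector space with basis $\{[x]\}_{x\in X}$. By Lemma \ref{lemm:KlHom-alg}, the composite $\Phi_X\compos\rho_{\kk,X}:\kk G\to\Mat_X(\kk)$ is a $\kk$-algebra homomorphism. Matrices in $\Mat_X(\kk)$ act on $V_X$ with respect to this basis: $(a_{ij})$ sends $[j]\mapsto\sum_i a_{ij}[i]$, and this identification $\Mat_X(\kk)\cong\End_{\kk}(V_X)$ is a unital algebra isomorphism. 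Restricting to $G\subseteq\kk G$ gives $\rho_{V_X}:G\to\operatorname{GL}(V_X)$. Each $\rho_{V_X}(g)$ is invertible because $g$ is a unit in $\kk G$ and algebra homomorphisms preserve units. We put $F(X,\rho_{\kk,X})=(V_X,\rho_{V_X})$.

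\emph{On morphisms.} A homomorphism $f:X\rightsquigarrow Y$ has matrix $[f]=(f_{y,x})$, which defines the linear map $\tilde f:V_X\to V_Y$, $[x]\mapsto f(x)$. The defining condition $\Phi_Y(\rho_{\kk,Y}(a))[f]=[f]\Phi_X(\rho_{\kk,X}(a))$, taken for $a=g\in G$, says exactly that $\tilde f$ is a $\kk G$-homomorphism. Functoriality follows from \eqref{eq:linear-Kleisli-composition}: $[h\compos_{\Kl}f]=[h][f]$, and $[\eta_X]=I_X$.

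\emph{Fully faithful.} Column-finite matrices indexed by $Y\times X$ are in bijection with both Kleisli arrows $X\rightsquigarrow Y$ and linear maps $V_X\to V_Y$. It remains to see that the intertwining conditions match. The condition for all $a\in\kk G$ is equivalent to the condition for all $g\in G$, since both sides are $\kk$-linear in $a$ and $G$ spans $\kk G$. Hence $F$ is bijective on Hom-sets.

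\emph{Essentially surjective.} Given $(V,\rho_V)$ with $\dim V=n$, choose a basis $\{v_x\}_{x\in X}$ indexed by a finite set $X$ with $|X|=n$. Let $M(g)\in\Mat_X(\kk)$ be the matrix of $\rho_V(g)$ in this basis, and extend linearly to a $\kk$-algebra homomorphism $\kk G\to\Mat_X(\kk)$; it is multiplicative since $\rho_V$ is. Put $\rho_{\kk,X}:=\Phi_X^{-1}\compos M$. Then the coordinate isomorphism $V_X\to V$, $[x]\mapsto v_x$, is $G$-equivariant by construction, so $F(X,\rho_{\kk,X})\cong(V,\rho_V)$.

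Finally, $F$ is $\kk$-linear, since sums and scalar multiples of Kleisli arrows correspond to sums and scalar multiples of matrices.

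The main point to get right is the bookkeeping of conventions: $\Phi_X$ uses $f(\bullet)=\sum f_{i,j}[(i,j)]$, while Kleisli arrows use $f(x)=\sum_y f_{y,x}[y]$. Proposition \ref{prop:matrix-convolution} makes convolution into ordinary (not transposed) multiplication, so the action of $\Phi_X(\rho(a))$ on column vectors, $[j]\mapsto\sum_i a_{ij}[i]$, is a left module action and not a right one. I will check this orientation explicitly before anything else; if a transpose appeared, the construction would instead produce modules over $\kk G^{\mathrm{op}}$ and the argument would need adjusting.
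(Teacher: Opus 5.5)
Your proposal is correct and follows essentially the same route as the paper. The paper sends $(X,\rho_{\kk,X})$ to $\semimod_{\kk}(X)$ with action through $\Phi_X\compos\rho_{\kk,X}$ and sends morphisms to the linear maps with matrix $[f]$; it gets fullness and faithfulness from the bijection between Kleisli arrows and matrices, and essential surjectivity by choosing a basis and pulling back through $\Phi_X^{-1}$. Your extra remarks (restricting to $G$ to check invertibility, reducing the intertwining condition from $\kk G$ to $G$ by linearity, and the orientation check, which Proposition \ref{prop:matrix-convolution} settles since $\Phi_X(\alpha\ast\beta)=\Phi_X(\alpha)\Phi_X(\beta)$) only make explicit what the paper leaves implicit.
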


\begin{proof}
By Theorem~\ref{thm:linear KLcrep gp}, $\KCrep_{\kk}(G)$ is a category.
Next, we construct a functor $\mathcal F_{\kk}:\KCrep_{\kk}(G) \to \rep_{\kk}(G)$.
Let $(X, \rho_{\kk,X})$ be an object of $\KCrep_{\kk}(G)$.
By Lemma \ref{lemm:KlHom-alg}, we have an isomorphism of $\kk$-algebras
$\Phi_X:\KlHom_{\kk}(X) \mathop{\to}\limits^{\cong} \Mat_X(\kk)$.
Therefore, the composition
${\Phi_X \compos \rho_{\kk,X}}:
 \kk G\To{\rho_{\kk,X}}\KlHom_{\kk}(X)
 \To{\Phi_X}\Mat_X(\kk)$
is a $\kk$-algebra homomorphism.
It defines a left $\kk G$-module $\kk  G \times \semimod_{\kk}(X) \to \semimod_{\kk}(X)$
given by $(a,v) \mapsto a\cdot v :=\Phi_X(\rho_{\kk,X}(a))v$.
Since $X$ is finite, $\semimod_{\kk}(X)$ is a finite-dimensional $\kk$-vector space.
Thus, we obtain a correspondence
\[\mathcal F_{\kk}(X,\rho_{\kk,X}) := \semimod_{\kk}(X)\]
sending each linear Kleisli convolution representation to a left $\kk G$-module $\semimod_{\kk}(X)$.
It is a \emph{correspondence of objects from $\KCrep_{\kk}(G)$ to $\rep_{\kk}(G)$.}

Let $f:(X,\rho_{\kk,X}) \to (Y,\rho_{\kk,Y})$ be a homomorphism in $\KCrep_{\kk}(G)$.
Thus, $f$ is a Kleisli arrow $f:X\rightsquigarrow Y$, which can be written as $f(x)=\sum\limits_{y\in Y}f_{y,x}[y]$.
Define
\[ \mathcal F_{\kk}(f):  \semimod_{\kk}(X) \to \semimod_{\kk}(Y), ~ [x] \mapsto f(x). \]
Its matrix with respect to the bases indexed by $X$ and $Y$ is $[f] = (f_{y,x})$.
By Definition \ref{def:linear Kl convol repr gp} \ref{def:linear Kl convol repr gp 2}, we have
$\Phi_Y(\rho_{\kk,Y}(a))[f] = [f]\Phi_X(\rho_{\kk,X}(a))$ ($\forall a\in\kk G$).
Then, for every $v\in\semimod_{\kk}(X)$, we have
$\mathcal F_{\kk}(f)(a\cdot v) = [f]\Phi_X(\rho_{\kk,X}(a))v = \Phi_Y(\rho_{\kk,Y}(a))[f]v = a\cdot\mathcal F_{\kk}(f)(v)$.
It follows that \emph{$\mathcal F_{\kk}(f)$ is a homomorphism of left $\kk G$-modules}.

By \eqref{eq:linear-Kleisli-composition}, for any two composable
morphisms $f$ and $h$, we have $[h\compos_{\Kl}f]=[h][f]$. Then we get
$\mathcal F_{\kk}(h\compos_{\Kl}f)= \mathcal F_{\kk}(h)\compos\mathcal F_{\kk}(f)$.
Moreover, the identity Kleisli arrow
$\eta_X^{\semimod_{\kk}}$ has the identity matrix $I_X$, and hence
$\mathcal F_{\kk}(\eta_X^{\semimod_{\kk}})
=\id_{\semimod_{\kk}(X)}$. Therefore, $\mathcal F_{\kk}$ is a
functor.

Let $f$ and $h$ be two morphisms from $(X,\rho_{\kk,X})$ to $(Y,\rho_{\kk,Y})$ such that $\mathcal F_{\kk}(f)=\mathcal F_{\kk}(h)$.
Then these two linear maps have the same matrix, so $[f]=[h]$.
A Kleisli arrow $X\rightsquigarrow Y$ is uniquely determined by its matrix.
Therefore, $f=h$, and so \emph{$\mathcal F_{\kk}$ is faithful}.

Let $\varphi: \mathcal F_{\kk}(X,\rho_{\kk,X}) \to \mathcal F_{\kk}(Y,\rho_{\kk,Y})$ be a homomorphism of left $\kk G$-modules.
Let $[\varphi]$ be its matrix with respect to the bases indexed by $X$ and $Y$.
Then there is a unique Kleisli arrow $f:X\rightsquigarrow Y$ whose matrix is $[f]=[\varphi]$.
Since $\varphi$ is $\kk G$-linear, for every $a\in\kk G$ we have
$ [f]\Phi_X(\rho_{\kk,X}(a)) = \Phi_Y(\rho_{\kk,Y}(a))[f]$.
Thus, $f$ is a morphism from $(X,\rho_{\kk,X})$ to $(Y,\rho_{\kk,Y})$ in $\KCrep_{\kk}(G)$, and $\mathcal F_{\kk}(f)=\varphi$.
Therefore, $\mathcal F_{\kk}$ is full.

Finally, we prove that $\mathcal F_{\kk}$ is essentially surjective.
Let $V$ be a finite-dimensional left $\kk G$-module.
Choose a basis $\{v_x:x\in X\}$ of $V$, indexed by a finite set $X$.
For every $a\in\kk G$, let $[a]_X\in\Mat_X(\kk)$ be the matrix of the linear map
$V\to V$, $v\mapsto a\cdot v$ with respect to this basis. The module axioms imply that the map
$[-]_X : a\mapsto[a]_X$ is a $\kk$-algebra homomorphism from $\kk G$ to $\Mat_X(\kk)$.
By Lemma \ref{lemm:KlHom-alg}, we can define
\[ \rho_{\kk,X} = \Phi_X^{-1}\compos [-]_X : \kk G \to \Mat_X(\kk) \to \KlHom_{\kk}(X), ~ a\mapsto \Phi_X^{-1}([a]_X). \]
Then it is a $\kk$-algebra homomorphism, and so $(X,\rho_{\kk,X})$ is an object of $\KCrep_{\kk}(G)$.
The linear map $\semimod_{\kk}(X)\to V$, $[x]\mapsto v_x$ is an isomorphism of $\kk G$-modules from $\mathcal F_{\kk}(X,\rho_{\kk,X})$ to $V$.
Therefore, every object of $\rep_{\kk}(G)$ is isomorphic to an object in the image of $\mathcal F_{\kk}$.
Then we obtain that $\mathcal F_{\kk}$ is essentially surjective.

Since $\mathcal F_{\kk}$ is full, faithful and essentially surjective, we have $\KCrep_{\kk}(G) \mathop{\simeq}\limits^{\mathcal F_{\kk}} \rep_{\kk}(G)$.
\end{proof}

\section{Rings and their Kleisli convolution representations}\label{sect:ring KCR}

In this section, we introduce Kleisli convolution representations of rings and compare their category with a full subcategory of the module category. We also discuss the corresponding Eilenberg--Moore category and some special representations.

\subsection{Rings and modules}

We first recall some basic notions of rings and modules, see \cite{Lam}.
Throughout this section, $R$ is an associative ring with identity $1_R$.
Recall that a \defines{left $R$-module} is an Abelian group $(M,+,0)$ together with a map
\[R\times M\to M,\quad (r,m)\mapsto rm\]
such that $(r+s)m=rm+sm$, $r(m+n)=rm+rn$, $(rs)m=r(sm)$ and $1_Rm=m$ hold for all $r,s\in R$ and $m,n\in M$.
In this paper, all modules are unital left modules.
For two $R$-modules $M$ and $N$, an \defines{$R$-homomorphism} (or equivalently, a \defines{homomorphism of $R$-modules}) is an Abelian group homomorphism $h: M \to N$ with
\[ h(r\cdot m)= r\cdot h(m), \quad \forall r\in R, m\in M. \]
We denote the category of left $R$-modules and $R$-module homomorphisms by $R\text{-}\Mod$,
and denote its full subcategory of finitely generated left $R$-modules by $R\text{-}\modcat$.

Note that a left $R$-module can also be defined by a ring homomorphism, i.e.,
it is a pair $(M, \varrho)$ of an Abelian group $(M,+,0)$ and a ring homomorphism
\[\varrho: R\to \End_{\ZZ}(M), \quad r\mapsto \varrho(r) := \widetilde{r}. \]
Under this definition, the $R$-homomorphism can be defined as an Abelian group homomorphism $h: M \to N$ with
\[ h(\widetilde{r}(m)) = \widetilde{r}(h(m)), \quad \forall r\in R, m\in M. \]

\subsection{Kleisli convolution representation}

Let $R\text{-}\mathsf{free}_{\ZZ}$ be the full subcategory of $R\text{-}\modcat$ consisting of the modules whose underlying Abelian groups are free of finite rank.
Notice that the notation $R\text{-}\mathsf{free}_{\ZZ}$ refers to freeness over $\ZZ$,
thus it does not mean the category of finitely generated free $R$-modules.

Applying Lemma \ref{lemm:semimod-monad} to $S=\ZZ$, we obtain a monad
$\scrM_{\ZZ}=(\semimod_{\ZZ}, \eta^{\semimod_{\ZZ}}, \mu^{\semimod_{\ZZ}})$,
which is called a \defines{free Abelian group monad}.
It implies that we can establish a Kleisli category $\Kl(\scrM_{\ZZ})$. By Proposition
\ref{prop:matrix-convolution}, for every finite set $X$ we have an isomorphism of rings
\[ \Phi_X:\KlHom_{\ZZ}(X) =\Hom_{\Kl(\scrM_{\ZZ})}(\bfone,E_X) \to \Mat_X(\ZZ). \]
Under this isomorphism, the Kleisli convolution is ordinary matrix multiplication and $u_X$ corresponds to the identity matrix $I_X$.

\begin{definition}\rm \label{def:Kl convol rep ring} \
\begin{enumerate}[label=(\arabic*)]
  \item A \defines{Kleisli convolution representation} of a ring $R$ on a finite set $X$ is a ring homomorphism
  \[ \varrho: R \to \KlHom_{\ZZ}(X).\]
  Equivalently, it is a pair $(X,\varrho)$ consisting of a finite set $X$
  and a unital ring homomorphism $\varrho$ as above.
  \label{def:Kl convol rep ring 1}

  \item Let $(X,\varrho_X)$ and $(Y,\varrho_Y)$ be two Kleisli convolution representations of $R$.
  A \defines{homomorphism} from $(X,\varrho_X)$ to $(Y,\varrho_Y)$ is a Kleisli arrow
  $f:X\rightsquigarrow Y$ in $\Kl(\scrM_{\ZZ})$ such that
  \[ \Phi_Y(\varrho_Y(r))[f]=[f]\Phi_X(\varrho_X(r)) \text{ for all } r\in R, \]
  where $[f]$ is the integer matrix associated with $f$.
  \label{def:Kl convol rep ring 2}
\end{enumerate}
\end{definition}

The following result shows that the above representations and their
homomorphisms form a category.

\begin{theorem}\label{thm:ring-KCrep-category}
All  Kleisli convolution representations of $R$ and all homomorphisms between them form an additive category $\KCrep_{\ZZ}(R)$.
\end{theorem}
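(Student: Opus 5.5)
The plan has two parts. First I check the category axioms, following the proof of Theorem \ref{thm:linear KLcrep gp} almost verbatim. Then I build the additive structure: abelian Hom-groups, bilinear composition, a zero object and binary biproducts.

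\textbf{Category axioms.} Composition of homomorphisms $f:(X,\varrho_X)\to(Y,\varrho_Y)$ and $h:(Y,\varrho_Y)\to(Z,\varrho_Z)$ is the Kleisli composition $h\compos_{\Kl} f$. Its integer matrix is $[h][f]$, by the computation of Kleisli composition in $\Mat_S$ with $S=\ZZ$. The intertwining identity then follows from the chain
\[
\Phi_Z(\varrho_Z(r))[h][f]=[h]\Phi_Y(\varrho_Y(r))[f]=[h][f]\Phi_X(\varrho_X(r)).
\]
The identity morphism is $\eta^{\semimod_{\ZZ}}_X$, whose matrix is $I_X$. Associativity and the unit laws are inherited from $\Kl(\scrM_{\ZZ})$.

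\textbf{Abelian Hom-groups and bilinearity.} A Kleisli arrow $X\rightsquigarrow Y$ between finite sets is the same as an integer matrix $(f_{y,x})$. So I define $f+h$ pointwise, $(f+h)(x)=f(x)+h(x)$ in $\semimod_{\ZZ}(Y)$; then $[f+h]=[f]+[h]$. The intertwining condition is linear in $[f]$, so $\Hom((X,\varrho_X),(Y,\varrho_Y))$ is a subgroup of $\Mat_{Y\times X}(\ZZ)$. Bilinearity of composition is distributivity of matrix multiplication.

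\textbf{Zero object.} I take $X=\emptyset$. Here $\KlHom_{\ZZ}(\emptyset)=\semimod_{\ZZ}(\emptyset)=0$ is the zero ring, and the unique map $R\to 0$ is a unital ring homomorphism. There is exactly one Kleisli arrow $\emptyset\rightsquigarrow Y$, the empty map. There is exactly one Kleisli arrow $Y\rightsquigarrow\emptyset$, namely $y\mapsto 0\in\semimod_{\ZZ}(\emptyset)=\{0\}$. So $(\emptyset,0)$ is a zero object.

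\textbf{Biproducts.} For objects $(X,\varrho_X)$ and $(Y,\varrho_Y)$, I take the disjoint union $X\sqcup Y$. I define
\[
\varrho_{X\sqcup Y}(r):=\Phi_{X\sqcup Y}^{-1}\begin{pmatrix}\Phi_X(\varrho_X(r))&0\\0&\Phi_Y(\varrho_Y(r))\end{pmatrix}.
\]
This is a unital ring homomorphism $R\to\KlHom_{\ZZ}(X\sqcup Y)$, because block-diagonal embedding is a unital ring map and $\Phi$ is a ring isomorphism by Proposition \ref{prop:matrix-convolution}. The inclusions $\iota_X,\iota_Y$ are given by $x\mapsto[x]$, and the projections $\pi_X,\pi_Y$ by $x\mapsto[x]$ and $y\mapsto 0$ (and symmetrically). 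Their matrices are the block matrices $\binom{I}{0}$ and $(I\ 0)$, so they intertwine the block-diagonal actions. The biproduct identities
\[
\pi_X\iota_X=\id,\qquad \pi_Y\iota_X=0,\qquad \iota_X\pi_X+\iota_Y\pi_Y=\id
\]
are immediate block-matrix identities.

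The main obstacle is the bookkeeping rather than any real difficulty. The points needing care are that the empty set is a legitimate finite set for Definition \ref{def:Kl convol rep ring}, that $\Mat_\emptyset(\ZZ)=0$ is the zero ring, and that $\Phi_X$ intertwines ring structures, so that block-diagonal sums are ring homomorphisms.
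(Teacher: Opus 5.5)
Your proof is correct and follows essentially the same route as the paper: Kleisli composition with $[h\compos_{\Kl}f]=[h][f]$ for the category axioms, and pointwise sums together with bilinearity of matrix multiplication for the Hom-groups. It also uses the paper's block-diagonal representation on $X\sqcup Y$ for biproducts and the representation on $\emptyset$ for the zero object, and you supply details the paper leaves implicit, namely the explicit inclusions and projections with their intertwining and biproduct identities, and the check that $(\emptyset,0)$ is a zero object.
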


\begin{proof}
Let $f:(X,\varrho_X)\to(Y,\varrho_Y)$ and $h:(Y,\varrho_Y)\to(Z,\varrho_Z)$ be two homomorphisms.
We define their composition to be the Kleisli composition $h\compos_{\Kl}f:X\rightsquigarrow Z$.
By Lemma \ref{fact:Kl(scrM) is Mat}, its matrix is $[h\compos_{\Kl}f]=[h][f]$.
For every $r\in R$, we have
\begin{align*}
 \Phi_Z(\varrho_Z(r))[h\compos_{\Kl}f]
 &=\Phi_Z(\varrho_Z(r))[h][f]\\
 &=[h]\Phi_Y(\varrho_Y(r))[f]\\
 &=[h][f]\Phi_X(\varrho_X(r))\\
 &=[h\compos_{\Kl}f]\Phi_X(\varrho_X(r)).
\end{align*}
Here, the equalities follow from Definition \ref{def:Kl convol rep ring} \ref{def:Kl convol rep ring 2}.
Thus, $h\compos_{\Kl}f$ is a homomorphism.

For every object $(X,\varrho_X)$, the Kleisli identity $\eta_X^{\semimod_{\ZZ}}:X\rightsquigarrow X$ corresponds to $I_X$.
Hence, for every $r\in R$, we have $\Phi_X(\varrho_X(r))I_X=I_X\Phi_X(\varrho_X(r))$,
and therefore $\eta_X^{\semimod_{\ZZ}}$ is a homomorphism from $(X,\varrho_X)$ to itself.
The associativity and the identity laws follow from those in the Kleisli category $\Kl(\scrM_{\ZZ})$.
Consequently, the objects and homomorphisms in Definition \ref{def:Kl convol rep ring} form a category $\KCrep_{\ZZ}(R)$.

Next, we show that $\KCrep_{\ZZ}(R)$ is an additive category.
If $f$ and $g$ are two homomorphisms from $(X,\varrho_X)$ to $(Y,\varrho_Y)$, then we have
\begin{align*}
 \Phi_Y(\varrho_Y(r))([f]+[g]) = \Phi_Y(\varrho_Y(r))[f]+\Phi_Y(\varrho_Y(r))[g] = ([f]+[g])\Phi_X(\varrho_X(r)).
\end{align*}
since $\Phi_Y:\KlHom_{\ZZ}(Y) \to \Mat_Y(\ZZ)$ is a ring homomorphism.
Thus, $f+g$ is a homomorphism of Kleisli convolution representations,
and every Kleisli Hom-space $\Hom_{\KCrep_{\ZZ}(R)}((X,\varrho_X), (Y,\varrho_Y))$ is an Abelian group.
Kleisli composition is bilinear because matrix multiplication is bilinear.
Finally, if $X\sqcup Y$ is the disjoint union, then the homomorphism
\[
 R \to \Mat_{X\sqcup Y}(\ZZ),\quad
 r \mapsto
 \begin{pmatrix}
  \Phi_X(\varrho_X(r))&0\\
  0&\Phi_Y(\varrho_Y(r))
 \end{pmatrix}
\]
defines a biproduct of $(X,\varrho_X)$ and $(Y,\varrho_Y)$.
The representation on the empty set is a zero object.
Then $\KCrep_{\ZZ}(R)$ is additive.
\end{proof}

\begin{theorem}\label{thm:ring-free}
There is an additive equivalence $\KCrep_{\ZZ}(R) \simeq R\text{-}\mathsf{free}_{\ZZ}$.
\end{theorem}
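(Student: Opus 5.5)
We will mirror the proof of Theorem \ref{theo:KC-linear}, replacing $\kk$ by $\ZZ$ and linear maps by group homomorphisms. First, we construct a functor $\mathcal F:\KCrep_{\ZZ}(R)\to R\text{-}\mathsf{free}_{\ZZ}$. Given $(X,\varrho_X)$, the composite $\Phi_X\compos\varrho_X:R\to\Mat_X(\ZZ)$ is a unital ring homomorphism, and $\Mat_X(\ZZ)\cong\End_{\ZZ}(\semimod_{\ZZ}(X))$ through the basis $\{[x]\}_{x\in X}$. Hence $\semimod_{\ZZ}(X)\cong\ZZ^{\oplus X}$ becomes a left $R$-module via $r\cdot v=\Phi_X(\varrho_X(r))v$, using the description of left modules as ring homomorphisms $R\to\End_{\ZZ}(M)$. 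Its underlying group is free of rank $|X|$. It is finitely generated over $R$, since it is already generated by $\{[x]\}$ over $\ZZ\cdot 1_R$. So the module lies in $R\text{-}\mathsf{free}_{\ZZ}$.

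On morphisms, a homomorphism $f:X\rightsquigarrow Y$ goes to the group homomorphism $\mathcal F(f):\semimod_{\ZZ}(X)\to\semimod_{\ZZ}(Y)$ with matrix $[f]$, namely $[x]\mapsto f(x)$. The intertwining condition of Definition \ref{def:Kl convol rep ring} \ref{def:Kl convol rep ring 2} says exactly that $\mathcal F(f)$ is $R$-linear. Functoriality follows from Lemma \ref{fact:Kl(scrM) is Mat}, since Kleisli composition is matrix multiplication and $\eta^{\semimod_{\ZZ}}_X$ corresponds to $I_X$. Additivity holds because sums of homomorphisms correspond to sums of matrices, as in the proof of Theorem \ref{thm:ring-KCrep-category}.

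Full faithfulness rests on one fact: $\ZZ$-linear maps $\ZZ^{\oplus X}\to\ZZ^{\oplus Y}$ between free groups of finite rank correspond bijectively to integer $Y\times X$ matrices, which in turn correspond to Kleisli arrows $X\rightsquigarrow Y$ in $\Kl(\scrM_{\ZZ})$. An $R$-linear $\varphi$ therefore has an integer matrix $[\varphi]$, and the unique Kleisli arrow $f$ with $[f]=[\varphi]$ satisfies the intertwining identity. Faithfulness is immediate because a Kleisli arrow is determined by its matrix.

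The main obstacle is essential surjectivity, and it is exactly where the freeness hypothesis enters. Let $M\in R\text{-}\mathsf{free}_{\ZZ}$. Choose a $\ZZ$-basis $\{v_x\}_{x\in X}$ of $M$ with $X$ finite; this exists precisely because $M$ is free of finite rank as an abelian group. Each $r\in R$ acts by a group endomorphism of $M$, whose matrix $[r]_X$ in this basis has integer entries. The module axioms make $r\mapsto[r]_X$ a unital ring homomorphism $R\to\Mat_X(\ZZ)$. Set $\varrho_X=\Phi_X^{-1}\compos[-]_X$, and check that $[x]\mapsto v_x$ is an $R$-isomorphism $\mathcal F(X,\varrho_X)\cong M$.

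Finally, we note why the equivalence cannot extend to all of $R\text{-}\modcat$: every module in the image of $\mathcal F$ is $\ZZ$-free, so modules with torsion or non-free underlying group, such as $\mathbb{Q}$ over $R=\mathbb{Q}$, are not reached. This explains the target category $R\text{-}\mathsf{free}_{\ZZ}$. An additive, fully faithful and essentially surjective functor gives the required additive equivalence.
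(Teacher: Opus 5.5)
Your proposal is correct and follows essentially the same route as the paper's proof. The paper also sends $(X,\varrho_X)$ to $\semimod_{\ZZ}(X)$ with the matrix action and sends a morphism to the group homomorphism with its integer matrix; it obtains full faithfulness from the matrix/Kleisli-arrow bijection, and essential surjectivity by choosing a $\ZZ$-basis and setting $\varrho_M=\Phi_X^{-1}\compos\sigma_M$.
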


\begin{proof}
By Theorem \ref{thm:ring-KCrep-category}, $\KCrep_{\ZZ}(R)$ is an additive category.
Now we establish an additive functor $F_R$.
Let $(X,\varrho_X)$ be an object of $\KCrep_{\ZZ}(R)$, and define
\[F_R(X,\varrho_X)=\semimod_{\ZZ}(X). \]
By \eqref{defeq:semimod}, $\semimod_{\ZZ}(X)$ is the free Abelian group with basis $\{[x]:x\in X\}$.
Using the isomorphism $\Phi_X$ in Proposition \ref{prop:matrix-convolution},
define an action of $R$ by $r\cdot v=\Phi_X(\varrho_X(r))v$ $(r\in R,\ v\in\semimod_{\ZZ}(X))$.
Since $\varrho_X$ is a unital ring homomorphism and $\Phi_X$ is an isomorphism of rings,
for $r,s\in R$ and $v,w\in\semimod_{\ZZ}(X)$ we have
$(r+s)\cdot v=r\cdot v+s\cdot v,~
   r\cdot(v+w)=r\cdot v+r\cdot w,~
   (rs)\cdot v=r\cdot(s\cdot v),\text{ and }
   1_R\cdot v=v$.
Therefore, $\semimod_{\ZZ}(X)$ is a left $R$-module.
Its underlying Abelian group is free of finite rank.
Moreover, it is finitely generated as an $R$-module.
Indeed, every element has the form $\sum\limits_{x\in X}n_x[x]$,
and $\sum\limits_{x\in X}n_x[x] = \sum\limits_{x\in X}(n_x1_R)\cdot[x]$.
Thus, the elements $[x]$, $x\in X$, generate $\semimod_{\ZZ}(X)$ as an $R$-module.

Let $f\colon(X,\varrho_X)\to(Y,\varrho_Y)$ be a homomorphism, and define
\[ F_R(f):\semimod_{\ZZ}(X)\to \semimod_{\ZZ}(Y), \quad [x] \mapsto f(x).\]
Its matrix with respect to the bases indexed by $X$ and $Y$ is $[f]$.
For every $r\in R$ and $v\in\semimod_{\ZZ}(X)$, we have
$F_R(f)(r\cdot v) = [f]\Phi_X(\varrho_X(r))v = \Phi_Y(\varrho_Y(r))[f]v = r\cdot F_R(f)(v)$
by Definition \ref{def:Kl convol rep ring} \ref{def:Kl convol rep ring 2}.
Thus, $F_R(f)$ is $R$-linear. By Lemma \ref{fact:Kl(scrM) is Mat},
Kleisli composition corresponds to matrix multiplication and the Kleisli identity corresponds to the identity matrix.
Thus, $F_R$ preserves compositions and identities. It also preserves sums of homomorphisms.
Therefore, $F_R$ is an additive functor.

Suppose that two homomorphisms $f$ and $g$ have the same image under $F_R$.
Then they have the same values on every basis element $[x]$, i.e., $f(x)=g(x)$ for every $x\in X$, and so $f=g$.
Therefore, $F_R$ is faithful.

Let $h: F_R(X,\varrho_X)\to F_R(Y,\varrho_Y)$ be an $R$-module homomorphism.
Since the source and target are free Abelian groups with the specified bases,
$h$ is represented by a unique integer matrix $[f]$, i.e., there is a unique Kleisli arrow $f:X\rightsquigarrow Y$ corresponding to the matrix $[f]$.
The equality $h(r\cdot v)=r\cdot h(v)$ is equivalent to $[f]\Phi_X(\varrho_X(r))=\Phi_Y(\varrho_Y(r))[f]$
for all $r\in R$. Therefore, $f$ is a homomorphism in $\KCrep_{\ZZ}(R)$, and $F_R(f)=h$.
Thus, $F_R$ is full.

Finally, let $M$ be an object of $R\text{-}\mathsf{free}_{\ZZ}$.
Choose a $\ZZ$-basis $\{m_x\mid x\in X\}$ of its underlying Abelian group.
For every $r\in R$, the endomorphism $h_r: M\to M,~ m\mapsto r m$ has an integer matrix $\sigma_M(r)$,
and for any $r_1, r_2\in R$, we have
\[ \sigma_M(r_1+r_2)=\sigma_M(r_1)+\sigma_M(r_2),~
   \sigma_M(r_1r_2)=\sigma_M(r_1)\sigma_M(r_2),~\text{ and }
   \sigma_M(1_R)=I_X. \]
Then we obtain a ring homomorphism
$\sigma_M: R \to \Mat_{X}(\ZZ)$ ($\mathop{\longrightarrow}\limits_{\cong}^{\varsigma_M} \End_{\ZZ}(M)$),
$r\mapsto \sigma_M(r) := \varsigma^{-1}_M(h_r)$.
By Proposition \ref{prop:matrix-convolution}, define $\varrho_M=\Phi_X^{-1}\compos\sigma_M: R\to\KlHom_{\ZZ}(X)$.
Then $\varrho_M$ is a unital ring homomorphism, and so $(X,\varrho_M)$ is an object of $\KCrep_{\ZZ}(R)$.
The map $\semimod_{\ZZ}(X)\to M$, $[x]\mapsto m_x$ is an $R$-module isomorphism $F_R(X,\varrho_M)\cong M$.
Thus, $F_R$ is essentially surjective. Since it is full, faithful and essentially surjective,
we get that it is an additive equivalence.
\end{proof}

In general, Theorem \ref{thm:ring-free} cannot be extended to all finitely generated modules without changing the construction,
see the following example.

\begin{example}\label{ex:ring-obstruction} \rm
Take $R=\ZZ$. The finitely generated $\ZZ$-module $\ZZ/2\ZZ$ cannot be isomorphic to $\ZZ^{\oplus X}$ for any finite set $X$.
Therefore, it is not represented by an object of $\KCrep_{\ZZ}(\ZZ)$.
In this case, we have $\KCrep_{\ZZ}(\ZZ) \not\simeq\ZZ\text{-}\modcat$.
\end{example}

\begin{remark} \rm
If one restricts attention to rings and modules for which the relevant underlying Abelian groups are free, then the matrix Kleisli description is enough. Over a field, every vector space is free, and this is why the algebra case in Section \ref{sect:algebra KCR} gives the whole finite-dimensional module category.
\end{remark}

\subsection{Vs. Eilenberg--Moore categories}

There is nevertheless a monadic description of the whole module category.
This is an elementary instance of the \defines{Eilenberg--Moore construction} given in \cite{EM1965}.
Consider the endofunctor
\[ T_R=R\otimes_{\ZZ}-\colon\Ab\to\Ab. \]
The unit is
\[ \eta_M^{T_R}:  M\to R\otimes_{\ZZ}M, \quad m\mapsto1_R\otimes m,\]
and the multiplication is induced by the multiplication of $R$ as follows
\[ \mu_M^{T_R}: R\otimes_{\ZZ}R\otimes_{\ZZ}M \to R\otimes_{\ZZ}M,\quad r\otimes s\otimes m\mapsto rs\otimes m. \]

\begin{lemma}\label{lemm:ring tensor monad}
The triple $\scrT_R=(T_R,\eta^{T_R},\mu^{T_R})$ is a monad on the category $\Ab$.
\end{lemma}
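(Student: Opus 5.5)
The plan is to verify directly that $T_R=R\otimes_{\ZZ}-$ is an endofunctor, that $\eta^{T_R}$ and $\mu^{T_R}$ are well defined and natural, and that the unit and associativity laws hold. Every map involved is additive and defined on pure tensors, so it suffices to check each identity on generators $r\otimes m$ or $r\otimes s\otimes m$. This mirrors the proof of Lemma \ref{lemm:semimod-monad}, with formal sums replaced by tensors.

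\emph{Step 1 (functor).} For an additive map $h\colon M\to N$, set $T_R(h)=\id_R\otimes h$. Functoriality of the tensor product gives $T_R(\id_M)=\id$ and $T_R(h'\compos h)=T_R(h')\compos T_R(h)$.

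\emph{Step 2 (well-definedness).} The map $M\to R\otimes_{\ZZ}M$, $m\mapsto 1_R\otimes m$, is additive, so $\eta^{T_R}_M$ is a homomorphism of Abelian groups. For $\mu^{T_R}_M$, the assignment $(r,s,m)\mapsto rs\otimes m$ is $\ZZ$-trilinear, because multiplication in $R$ is biadditive and $\otimes$ is biadditive. By the universal property of $R\otimes_{\ZZ}R\otimes_{\ZZ}M$ (with associativity of the tensor product fixing the parenthesization), it therefore induces a unique additive map. This is the one point where one must argue rather than just write down a formula. I would note it explicitly, since it is where well-definedness could otherwise be questioned.

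\emph{Step 3 (naturality).} For $h\colon M\to N$ we have
\[ T_R(h)(\eta_M(m))=1_R\otimes h(m)=\eta_N(h(m)) \]
and
\[ T_R(h)(\mu_M(r\otimes s\otimes m))=rs\otimes h(m)=\mu_N(T_R^2(h)(r\otimes s\otimes m)). \]
Since pure tensors generate, these identities on generators suffice.

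\emph{Step 4 (monad laws).} The unit laws on a generator are
\[ \mu_M(\eta_{T_RM}(r\otimes m))=\mu_M(1_R\otimes r\otimes m)=r\otimes m \]
and
\[ \mu_M(T_R(\eta_M)(r\otimes m))=\mu_M(r\otimes 1_R\otimes m)=r\otimes m, \]
using that $1_R$ is a two-sided identity. For associativity, both $\mu_M\compos\mu_{T_RM}$ and $\mu_M\compos T_R(\mu_M)$ send $r\otimes s\otimes t\otimes m$ to $rst\otimes m$, the first as $(rs)t$ and the second as $r(st)$. These agree by associativity of $R$, and since they agree on generators they are equal. Nothing here should be a real obstacle beyond the universal-property argument in Step 2.
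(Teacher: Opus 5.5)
Your proposal is correct and follows the paper's proof essentially step by step. Both arguments use the functoriality of $\id_R\otimes-$, check the naturality of $\eta^{T_R}$ and $\mu^{T_R}$ on pure tensors, and reduce the unit and associativity laws to $1_R$ being a two-sided identity and $(rs)t=r(st)$ in $R$. The only addition is your universal-property argument for why $\mu^{T_R}_M$ is well defined, which the paper leaves implicit; it does not change the route.
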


\begin{proof}
First, it is well-known that $T_R=R\otimes_{\ZZ}-: \Ab\to \Ab$ is an endofunctor on $\Ab$,
which:
\begin{itemize}
  \item sends each Abelian group $M$ to a tensor $R\otimes_{\ZZ} M$,
where the tensor product $R\otimes_{\ZZ} M$ is also an Abelian group;
  \item and sends each Abelian group homomorphism $f:M\to N$ to the map $R\otimes_{\ZZ}f:= \id_R\otimes f : R\otimes_{\ZZ}M \to R\otimes_{\ZZ}N$, $r\otimes m \mapsto r\otimes f(m)$,
where $R\otimes_{\ZZ}f$ is also an Abelian group homomorphism.
\end{itemize}

For an arbitrary homomorphism $f:M\to N$ and $m\in M$,
we have $T_R(f)(\eta^{T_R}_M(m))=1_R\otimes f(m)=\eta^{T_R}_N(f(m))$.
Thus, $\eta^{T_R}:\id_{\Ab}\to T_R$ is a natural transformation.
Moreover, for $r,s\in R$ and $m\in M$, both sides of $T_R(f)\compos\mu_M^{T_R} = \mu_N^{T_R} \compos T_R^2(f)$ send $r\otimes s\otimes m$ to $rs\otimes f(m)$.
Then $\mu^{T_R}:T_R^2\to T_R$ is also a natural transformation.

For $r,s,t\in R$ and $m\in M$, the two composites $\mu^{T_R}_M\compos T_R(\mu^{T_R}_M)$ and $\mu^{T_R}_M\compos\mu^{T_R}_{T_R(M)}$
send $r\otimes s\otimes t\otimes m$ to $r(st)\otimes m$ and $(rs)t\otimes m$, respectively.
They are equal since multiplication in $R$ is associative. Furthermore, we have
$\mu^{T_R}_M\compos\eta^{T_R}_{T_R(M)}(r\otimes m) = 1_Rr\otimes m=r\otimes m$ and
$\mu^{T_R}_M\compos T_R(\eta^{T_R}_M)(r\otimes m) = r1_R\otimes m=r\otimes m$.
Therefore, the associativity and two unit axioms hold, and so $\scrT_R$ is a monad on $\Ab$.
\end{proof}

We recall the definition of Eilenberg--Moore categories now.
This construction was introduced by Eilenberg and Moore in \cite[Section 2, pp.~383--385]{EM1965};
see also \cite[Chapter VI, Section 2, pp.~139--142]{Mac1998} and \cite[Section 5.2]{Riehl}.
Let $\scrT=(T,\eta,\mu)$ be a monad on a category $\calC$.
A \defines{$\scrT$-algebra} is a pair $(X,\alpha)$ consisting of an object $X$ of $\calC$ and a morphism $\alpha:T(X)\to X$ such that $\alpha\compos\eta_X=\id_X$ and $\alpha\compos T(\alpha)=\alpha\compos\mu_X$.
Let $(X,\alpha)$ and $(Y,\beta)$ be two $\scrT$-algebras.
A \defines{homomorphism of $\scrT$-algebras} from $(X,\alpha)$ to $(Y,\beta)$ is a morphism
$f:X\to Y$ in $\calC$ satisfying $f\compos\alpha=\beta\compos T(f)$.
The composition and identity morphisms are inherited from $\calC$.

\begin{definition}\rm
All $\scrT$-algebras and their homomorphisms form a category, denoted
by $\calC^{\scrT}$, which is called the \defines{Eilenberg--Moore category} of $\scrT$.
\end{definition}

\begin{remark}\rm
The notion of $\scrT$-algebras can be regarded as a categorical generalization of algebras over commutative rings.
To be precise, let $R$ be a commutative ring and define an endofunctor
$\calT_R:R\text{-}\Mod\to R\text{-}\Mod$ by
\[ \calT_R(M)=\bigoplus_{n\>= 0}M^{\otimes_R n}, \quad M^{\otimes_R 0}=R.\]
In this case, for an $R$-module homomorphism $f:M\to N$, $\calT_R(f)$ is $\bigoplus\limits_{n\>=0}f^{\otimes_R n}$.
The unit $\eta_M:M\to\calT_R(M)$ is the canonical inclusion into the component $M^{\otimes_R1}=M$.
The multiplication $\mu_M:\calT_R^2(M)\to \calT_R(M)$ is given by
\begin{align*}
  \mu: & (m_{11}\otimes\cdots\otimes m_{1n_1})\otimes\cdots\otimes
 (m_{\ell1}\otimes\cdots\otimes m_{\ell n_\ell})  \\
 & \mapsto m_{11}\otimes\cdots\otimes m_{1n_1}\otimes\cdots\otimes
 m_{\ell1}\otimes\cdots\otimes m_{\ell n_\ell}.
\end{align*}
Then $\scrT = (\calT_R,\eta,\mu)$ is a monad on $R\text{-}\Mod$,
cf. \cite[Chapter VI, Section 2]{Mac1998} and \cite[Section 5.2]{Riehl},
and the $\scrT$-algebras induced by $\scrT$, also called \defines{Eilenberg--Moore algebras},
are precisely the associative unital $R$-algebras.
Indeed, the structure map $\alpha:\calT_R(A)\to A$ associated with an $R$-algebra $A$ is given by
$\alpha(a_1\otimes\cdots\otimes a_n)=a_1\cdots a_n$.
Its restriction to $A^{\otimes_R2}$ gives the multiplication of $A$,
while its restriction to $A^{\otimes_R0}=R$ gives the unit map $R\to A$.
These two maps determine $\alpha$.
Therefore, ordinary associative unital $R$-algebras are special examples of $\scrT$-algebras.
\end{remark}

The following standard result identifies the Eilenberg--Moore category
of $\scrT_R$ with the category of left $R$-modules, cf. \cite[Chap 5, Example 5.2.6 {\rm(ii)}]{Riehl}.

\begin{proposition}\label{prop:em-ring}
The Eilenberg--Moore category $\Ab^{\scrT_R}$ is equivalent to $R\text{-}\Mod$.
\end{proposition}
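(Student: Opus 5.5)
The plan is to write down two mutually inverse functors $\Phi\colon \Ab^{\scrT_R}\to R\text{-}\Mod$ and $\Psi\colon R\text{-}\Mod\to\Ab^{\scrT_R}$ that are the identity on underlying Abelian groups and on underlying group homomorphisms. This will give an isomorphism of categories, and in particular an equivalence.

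For $\Phi$, take a $\scrT_R$-algebra $(M,\alpha)$ with $\alpha\colon R\otimes_{\ZZ}M\to M$ and define $r\cdot m:=\alpha(r\otimes m)$. Biadditivity of $\otimes_{\ZZ}$ and additivity of $\alpha$ give $(r+s)m=rm+sm$ and $r(m+n)=rm+rn$. The unit axiom $\alpha\compos\eta^{T_R}_M=\id_M$ gives $1_R\cdot m=m$. Evaluating the associativity axiom $\alpha\compos T_R(\alpha)=\alpha\compos\mu^{T_R}_M$ on $r\otimes s\otimes m$ gives $r\cdot(s\cdot m)=(rs)\cdot m$. For a homomorphism of algebras $f\colon(M,\alpha)\to(N,\beta)$, the condition $f\compos\alpha=\beta\compos T_R(f)$ evaluated on $r\otimes m$ is exactly $f(rm)=rf(m)$. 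So $\Phi$ sends $f$ to itself as an $R$-linear map, and functoriality is automatic because composition and identities are inherited from $\Ab$.

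For $\Psi$, take a left $R$-module $M$. The action $R\times M\to M$ is $\ZZ$-bilinear, so by the universal property of the tensor product it induces a unique additive map $\alpha_M\colon R\otimes_{\ZZ}M\to M$ with $r\otimes m\mapsto rm$. The algebra axioms need only be checked on pure tensors $r\otimes m$ and $r\otimes s\otimes m$, since these generate $R\otimes_{\ZZ}M$ and $R\otimes_{\ZZ}R\otimes_{\ZZ}M$ as Abelian groups and all maps involved are additive. On these generators the axioms reduce to $1_Rm=m$ and $r(sm)=(rs)m$. Likewise an $R$-linear map $f$ satisfies $f\compos\alpha_M=\alpha_N\compos(\id_R\otimes f)$ on generators, hence everywhere.

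Finally, $\Psi\Phi=\id$ because an additive map out of $R\otimes_{\ZZ}M$ is determined by its values on pure tensors, so $\alpha_{\Phi(M,\alpha)}=\alpha$. And $\Phi\Psi=\id$ holds by construction. The only step needing some care is this use of the universal property and the generation of tensor products by pure tensors, which justifies checking every identity on $r\otimes m$ and $r\otimes s\otimes m$ alone. Everything else is routine bookkeeping, so I expect no real obstacle.
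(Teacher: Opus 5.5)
Your proposal is correct and follows the paper's proof: you set $rm:=\alpha(r\otimes m)$ in one direction, use the universal property of $R\otimes_{\ZZ}M$ in the other, and match the morphism conditions, so the two constructions are mutually inverse. In fact they give an isomorphism of categories. You also state explicitly that pure tensors generate $R\otimes_{\ZZ}M$ and $R\otimes_{\ZZ}R\otimes_{\ZZ}M$, which justifies the step the paper leaves implicit when it checks the axioms and the morphism condition only on elements $r\otimes m$.
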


The proof of Proposition \ref{prop:em-ring} is short, but we include it for the reader's convenience.

\begin{proof}
Notice that a $\scrT_R$-algebra is an Abelian group $M$ together with a homomorphism $\alpha\colon R\otimes_{\ZZ}M\to M$ satisfying $\alpha(1\otimes m)=m$ and $\alpha(r\otimes\alpha(s\otimes m))=\alpha(rs\otimes m)$.
Define $rm:=\alpha(r\otimes m)$. The above two equations imply $1_Rm=\alpha(1_R\otimes m)=m$
and $r(sm) = \alpha(r\otimes\alpha(s\otimes m)) = \alpha(rs\otimes m) = (rs)m$.
Since $\alpha$ is a homomorphism of Abelian groups and is defined on the tensor product,
this implies that $(r+s)m=rm+sm$ and $r(m+n)=rm+rn$ hold for all $r,s\in R$ and $m$, $n\in M$.
Thus, $M$ is a left $R$-module.

Next, let $M$ be a left $R$-module. The map $R\times M\to M$, $(r,m)\mapsto rm$ is biadditive.
By the universal property of the tensor product, it induces a unique homomorphism
$\alpha_M\colon R\otimes_{\ZZ}M\to M$, $r\otimes m \mapsto rm$,
which implies $1_Rm=m$ and $r(sm)=(rs)m$.

Finally, let $(M,\alpha_M)$ and $(N,\alpha_N)$ be two $\scrT_R$-algebras.
A homomorphism of Abelian groups $f: M\to N$ is a $\scrT_R$-algebra homomorphism precisely when $f\compos\alpha_M = \alpha_N\compos(1_R\otimes f)$.
Then we have $f\compos\alpha_M (r\otimes m) = \alpha_N\compos(1_R\otimes f) (r\otimes m)$,
and then we obtain $f(rm)=rf(m)$.
Therefore, the $\scrT_R$-algebra morphisms are exactly the $R$-module homomorphisms.
The two constructions above are mutually inverse on both objects and morphisms, proving $\Ab^{\scrT_R} \simeq R\text{-}\Mod$.
\end{proof}

\begin{remark}\label{rem:kleisli-em} \rm
The Kleisli category $\Kl(\scrT_R)$ and the Eilenberg--Moore category $\Ab^{\scrT_R}$ have different roles.
The former is generated by free $\scrT_R$-algebras, while the latter contains all $R$-modules.
In fact, $\Hom_{\Kl(\scrT_R)}(\ZZ,\ZZ) = \Hom_{\Ab}(\ZZ,R\otimes_{\ZZ}\ZZ) \cong R$.
Under evaluation at $1$, Kleisli composition identifies this endomorphism ring with $R^{\mathrm{op}}$. Indeed, if $f_r(1)=r\otimes1$ and $f_s(1)=s\otimes1$, then $(f_s\compos_{\Kl}f_r)(1)=rs\otimes1$.
However, this endomorphism calculation does not imply that $\Kl(\scrT_R)$ is equivalent to $R\text{-}\Mod$.
This category is different from the construction $\KCrep_{\ZZ}(R)$ considered in Example \ref{ex:ring-obstruction}. In particular, when $R=\ZZ$, the monad $\scrT_R$ is isomorphic to the identity monad, and $\Kl(\scrT_R)\simeq\Ab\simeq R\text{-}\Mod$.
\end{remark}


\begin{corollary}\label{coro:em-ring}
There is a fully faithful functor $\KCrep_{\ZZ}(R)\to \Ab^{\scrT_R}$.
Its essential image consists precisely of those $\scrT_R$-algebras whose underlying abelian groups are free of finite rank.
\end{corollary}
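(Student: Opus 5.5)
The plan is to obtain the functor as a composite of two results already proved. Theorem \ref{thm:ring-free} gives a fully faithful additive functor $F_R\colon \KCrep_{\ZZ}(R)\to R\text{-}\Mod$, with essential image $R\text{-}\mathsf{free}_{\ZZ}$. It sends $(X,\varrho_X)$ to $\semimod_{\ZZ}(X)$ with action $r\cdot v=\Phi_X(\varrho_X(r))v$. Proposition \ref{prop:em-ring} gives an equivalence $R\text{-}\Mod\simeq\Ab^{\scrT_R}$. Its quasi-inverse $E$ sends a module $M$ to $(M,\alpha_M)$, where $\alpha_M(r\otimes m)=rm$, and sends a module map to the same underlying group homomorphism. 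Define the desired functor as $E\compos F_R$. Explicitly, it sends $(X,\varrho_X)$ to the $\scrT_R$-algebra $\bigl(\semimod_{\ZZ}(X),\ \alpha_X\bigr)$ with $\alpha_X(r\otimes v)=\Phi_X(\varrho_X(r))v$. It sends a homomorphism $f$ to the group homomorphism with matrix $[f]$.

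Full faithfulness follows because a composite of fully faithful functors is fully faithful, and an equivalence is fully faithful. For a more self-contained check, one can also verify directly that the $\scrT_R$-algebra condition $g\compos\alpha_X=\alpha_Y\compos(\id_R\otimes g)$ on a group homomorphism $g\colon\semimod_{\ZZ}(X)\to\semimod_{\ZZ}(Y)$ is exactly the intertwining condition $[g]\Phi_X(\varrho_X(r))=\Phi_Y(\varrho_Y(r))[g]$ of Definition \ref{def:Kl convol rep ring}\ref{def:Kl convol rep ring 2}. This reduces to evaluating on elements $r\otimes[x]$, which generate $R\otimes_{\ZZ}\semimod_{\ZZ}(X)$.

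For the essential image, one inclusion is immediate: every object $E F_R(X,\varrho_X)$ has underlying group $\semimod_{\ZZ}(X)\cong\ZZ^{\oplus X}$, which is free of finite rank. For the converse, take a $\scrT_R$-algebra $(M,\alpha)$ with $M$ free of finite rank. By Proposition \ref{prop:em-ring}, $(M,\alpha)$ is the image under $E$ of the module $M$ with $rm=\alpha(r\otimes m)$. This module is finitely generated as an $R$-module, since any $\ZZ$-basis generates it (as in the proof of Theorem \ref{thm:ring-free}, where $n\cdot m=(n1_R)m$), so it lies in $R\text{-}\mathsf{free}_{\ZZ}$. Essential surjectivity in Theorem \ref{thm:ring-free} then gives an isomorphism $M\cong F_R(X,\varrho_M)$ of $R$-modules. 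Applying $E$ yields an isomorphism of $\scrT_R$-algebras, because the morphisms coincide under the equivalence.

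The only point needing care is the compatibility of the two equivalences on morphisms, that is, that $E$ is the identity on underlying homomorphisms. The proof of Proposition \ref{prop:em-ring} shows this explicitly, so I expect no real obstacle; the argument is essentially bookkeeping.
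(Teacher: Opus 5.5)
Your proposal is correct and follows the paper's proof, which likewise composes the equivalence $\KCrep_{\ZZ}(R)\simeq R\text{-}\mathsf{free}_{\ZZ}$ of Theorem \ref{thm:ring-free} with the inclusion into $R\text{-}\Mod$ and the equivalence $R\text{-}\Mod\simeq\Ab^{\scrT_R}$ of Proposition \ref{prop:em-ring}. Your additional remarks make explicit details the paper leaves implicit: that a $\ZZ$-basis automatically generates the module over $R$, so the finite-generation requirement is harmless, and that the $\scrT_R$-algebra morphism condition translates directly into the intertwining condition.
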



\begin{proof}
By Theorem \ref{thm:ring-free}, we have an equivalence $\KCrep_{\ZZ}(R)\simeq R\text{-}\mathsf{free}_{\ZZ}$.
Moreover, $R\text{-}\mathsf{free}_{\ZZ}$ is a full subcategory of $R\text{-}\Mod$.
Combining this inclusion with the equivalence $\Ab^{\scrT_R}\simeq R\text{-}\Mod$ of categories,
given by Proposition \ref{prop:em-ring}, we obtain the required fully faithful functor.
Its essential image is precisely the stated subcategory.
\end{proof}

\subsection{Special Kleisli convolution representations of rings}

We now consider some special objects in $\KCrep_{\ZZ}(R)$. By Theorem~\ref{thm:ring-free},
every object $(X,\varrho)$ determines the left $R$-module $M_{\varrho}=\ZZ^{(X)}$ with action $r\cdot v=\Phi_X(\varrho(r))v$.
Hence the usual module-theoretic properties can be used to define special Kleisli convolution representations.
However, the underlying Abelian group of $M_{\varrho}$ is always free of finite rank.

\subsubsection{Flat Kleisli convolution representations}

Recall that a left $R$-module $M$ is \defines{flat} if the functor $-\otimes_R M$ from right $R$-modules to Abelian groups is exact.
Equivalently, for every monomorphism $u: N'\to N$ of right $R$-modules,
the induced homomorphism $u\otimes_R1_M\colon N'\otimes_RM\to N\otimes_RM$ is a monomorphism.

\begin{definition}\rm
An object $(X,\varrho)$ of $\KCrep_{\ZZ}(R)$ is called a \defines{flat Kleisli convolution representation} if the associated left $R$-module $M_{\varrho}$ is flat.
We denote the full subcategory consisting of such objects by $\KCrep_{\ZZ}^{\mathrm{flat}}(R)$.
\end{definition}

\begin{proposition}\label{prop:ring-flat}
The equivalence in Theorem~\ref{thm:ring-free} restricts to an additive equivalence
\[ \KCrep_{\ZZ}^{\mathrm{flat}}(R) \simeq (R\text{-}\mathsf{free}_{\ZZ})_{\mathrm{flat}},\]
where the category on the right is the full subcategory of flat modules.
\end{proposition}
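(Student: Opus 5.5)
The plan is to deduce the statement formally from Theorem \ref{thm:ring-free}, using only that flatness is an isomorphism-invariant property of modules. Let $F_R\colon\KCrep_{\ZZ}(R)\to R\text{-}\mathsf{free}_{\ZZ}$ be the additive equivalence built in the proof of Theorem \ref{thm:ring-free}, sending $(X,\varrho)$ to $\semimod_{\ZZ}(X)$ with action $r\cdot v=\Phi_X(\varrho(r))v$. By construction, this is exactly the module $M_{\varrho}$. So, by the definition of a flat Kleisli convolution representation, an object $(X,\varrho)$ lies in $\KCrep_{\ZZ}^{\mathrm{flat}}(R)$ if and only if $F_R(X,\varrho)$ lies in $(R\text{-}\mathsf{free}_{\ZZ})_{\mathrm{flat}}$. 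Hence $F_R$ restricts to a functor between the two full subcategories.

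The restricted functor is automatically additive, faithful and full. Both subcategories are full, and $F_R$ is additive, faithful and full on all of $\KCrep_{\ZZ}(R)$. Hom-groups in the full subcategories coincide with those in the ambient categories, so nothing new needs to be checked. Additivity of the subcategories themselves needs a small remark: the zero module is flat, and a finite direct sum of flat modules is flat because $-\otimes_R(M\oplus N)\cong(-\otimes_R M)\oplus(-\otimes_R N)$. The biproduct constructed in Theorem \ref{thm:ring-KCrep-category} therefore stays inside $\KCrep_{\ZZ}^{\mathrm{flat}}(R)$.

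The only step with content is essential surjectivity. Let $M$ be a flat module in $R\text{-}\mathsf{free}_{\ZZ}$. The proof of Theorem \ref{thm:ring-free} supplies $(X,\varrho_M)$ together with an $R$-module isomorphism $F_R(X,\varrho_M)\cong M$. Flatness is invariant under isomorphism: an isomorphism $M_{\varrho_M}\cong M$ induces a natural isomorphism $-\otimes_R M_{\varrho_M}\cong -\otimes_R M$, so exactness transfers. Therefore $M_{\varrho_M}$ is flat, $(X,\varrho_M)\in\KCrep_{\ZZ}^{\mathrm{flat}}(R)$, and the restricted functor is essentially surjective. Being full, faithful, essentially surjective and additive, it is an additive equivalence.

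I expect no real obstacle. The main point to be careful about is that the module attached to $(X,\varrho)$ in the definition of flatness is literally $F_R(X,\varrho)$, and not merely isomorphic to it. Even if it were only isomorphic, the isomorphism-invariance argument above would cover that case.
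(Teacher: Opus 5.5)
Your proposal is correct and follows the same route as the paper: you restrict $F_R$ to the full subcategories, inherit fullness and faithfulness from Theorem~\ref{thm:ring-free}, and get essential surjectivity from the isomorphism-invariance of flatness. Your extra check that the zero object and finite biproducts stay flat is a useful detail that the paper leaves implicit when it calls the restricted equivalence ``additive''.
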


\begin{proof}
Flatness is invariant under isomorphisms of $R$-modules.
The functor $F_R$ constructed in Theorem \ref{thm:ring-free} sends $(X,\varrho)$ to $M_{\varrho}$ and is
full, faithful and essentially surjective.
Therefore, its restriction has precisely the flat modules in $R\text{-}\mathsf{free}_{\ZZ}$ as its essential image.
The restricted functor is still full and faithful, and hence it is an equivalence.
\end{proof}

Every projective module is flat.
Hence, if the module $M_{\varrho}$ associated with $(X,\varrho)$ is projective,
then $(X,\varrho)$ is a flat Kleisli convolution representation.
The preceding proposition does not say that every flat $R$-module is represented by a finite Kleisli matrix,
because its underlying Abelian group must also be free of finite rank.

\subsubsection{Injective Kleisli convolution representations}

\begin{definition} \rm
An object $(X,\varrho)$ of $\KCrep_{\ZZ}(R)$ is called an
\defines{injective Kleisli convolution representation} if $M_{\varrho}$ is
injective in $R\text{-}\Mod$.
\end{definition}

\begin{proposition}\label{prop:ring-injective-zero}
There is no nonzero injective Kleisli convolution representation of $R$.
More precisely, if $M$ is an injective left $R$-module and its underlying Abelian group is free of finite rank, then $M=0$.
\end{proposition}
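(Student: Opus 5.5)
The plan is to show that such an $M$ is a divisible Abelian group. A divisible group that is free of finite rank must be zero. The key tool is the standard fact that injective left $R$-modules are divisible as Abelian groups. Since the statement is about arbitrary rings $R$, I will derive this directly rather than cite a result about principal ideal domains.

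\textbf{Step 1: divisibility.} Fix an integer $n\neq 0$ and $m\in M$. Consider the left $R$-module homomorphism $\iota:R\to R$, $r\mapsto nr$, given by multiplication by the central element $n1_R$. Two cases arise.
\begin{itemize}
\item If $R$ has characteristic zero in the relevant sense, I do not want to rely on $\iota$ being injective. Instead I use the extension-of-scalars trick: $\Hom_{\ZZ}(R,\mathbb{Q}/\ZZ)$-type coinduced modules, or more simply the following.
\item The cleanest route is to observe that $\Hom_{\ZZ}(R,-)$ is right adjoint to restriction, so coinduced modules $\Hom_{\ZZ}(R,D)$ with $D$ divisible are injective.
\end{itemize}
However, the direct argument needed here is the converse, so I will argue differently. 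Consider the $R$-module map $M\to M$, $v\mapsto nv$. Its kernel is $0$, because $M$ is torsion-free. Hence this map is an injective $R$-linear map $M\hookrightarrow M$. By injectivity of $M$, the identity $\id_M$ extends along it: there is an $R$-linear $g:M\to M$ with $g(nv)=v$ for all $v\in M$. Then $n\,g(v)=g(nv)=v$, so multiplication by $n$ is surjective on $M$. This gives divisibility using only torsion-freeness and injectivity, which avoids the issue of $n1_R$ possibly being a zero divisor in $R$.

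\textbf{Step 2: conclusion.} Now $M\cong\ZZ^{k}$ as an Abelian group and is divisible. Take $n=2$. Surjectivity of multiplication by $2$ on $\ZZ^{k}$ forces $\ZZ^{k}=2\ZZ^{k}$, and reducing modulo $2$ gives $(\ZZ/2\ZZ)^{k}=0$. Hence $k=0$ and $M=0$. Then the first sentence of the statement follows: if $(X,\varrho)$ is injective, then $M_{\varrho}=\ZZ^{(X)}$ is zero, so $X=\emptyset$, which is the zero object of Theorem \ref{thm:ring-KCrep-category}.

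The main obstacle I expected was justifying divisibility over a general ring, where the textbook argument (extending maps from ideals $nR$) can fail when $n1_R$ is a zero divisor or $R$ has positive characteristic. The self-map trick in Step 1 sidesteps this, since it only uses torsion-freeness of $M$, which is automatic here.
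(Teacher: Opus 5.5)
Your argument is correct and is essentially the paper's: extending $\id_M$ along the monomorphism $v\mapsto nv$ of $M$ is the same as the paper's step of extending $nM\to M$, $nm\mapsto m$, along the inclusion $nM\hookrightarrow M$, because multiplication by $n$ identifies $M$ with $nM$. Both proofs then rule out a nonzero free group of finite rank satisfying $M=nM$. The abandoned bullet points at the start of Step 1 (the map $R\to R$, $r\mapsto nr$, and the coinduced modules) play no role and should simply be deleted.
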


\begin{proof}
Suppose that $M \ne 0$ and that its underlying Abelian group is free of finite rank.
Fix an integer $n\>= 2$. Since $M$ is torsion-free, every element of $nM$ has a unique expression $nm$ with $m\in M$.
Define $f: nM\to M$, $nm\mapsto m$. The subgroup $nM$ is an $R$-submodule,
because $r(nm)=n(rm)$ for all $r\in R$. Moreover, $f(r(nm))=f(n(rm))=rm=rf(nm)$, so $f$ is $R$-linear.
Let $\iota: nM\hookrightarrow M$ be the inclusion.
If $M$ is injective, then the homomorphism $f$ would extend along $\iota$ to an $R$-module homomorphism $g: M\to M$.
For every $m\in M$ we would then have $m=f(nm)=g(nm)=ng(m)$.
Thus, every element of $M$ would be divisible by $n$.
A nonzero free Abelian group of finite rank is not divisible by $n$,
because a basis element does not belong to $nM$. This is a contradiction.
\end{proof}

\subsubsection{Simple Kleisli convolution representations}

\begin{definition} \rm
A nonzero object $(X,\varrho)$ of $\KCrep_{\ZZ}(R)$ is called a \defines{simple Kleisli convolution representation} if $M_{\varrho}$ is a simple left $R$-module.
\end{definition}

\begin{proposition}\label{prop:ring-simple-zero}
There is no simple Kleisli convolution representation of $R$.
\end{proposition}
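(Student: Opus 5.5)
The plan is to argue by contradiction, mirroring the divisibility trick in the proof of Proposition \ref{prop:ring-injective-zero}. Suppose $(X,\varrho)$ is a simple Kleisli convolution representation of $R$. By definition it is nonzero, so $X\neq\emptyset$ and the associated module $M=M_{\varrho}=\semimod_{\ZZ}(X)$ is a nonzero free Abelian group of finite rank $|X|$. Moreover, by Theorem \ref{thm:ring-free} the $R$-action is given by integer matrices $\Phi_X(\varrho(r))$, and these commute with multiplication by integers.

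The key step is to exhibit a proper nonzero $R$-submodule of $M$. Fix the integer $n=2$ and consider the subgroup $2M\subseteq M$.

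First, $2M$ is an $R$-submodule, since $r(2m)=2(rm)$ for all $r\in R$ and $m\in M$; this is exactly the observation already used in the proof of Proposition \ref{prop:ring-injective-zero}.

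Second, $2M\neq 0$. Indeed, $M$ is torsion-free and nonzero, so for any basis element $[x]$ the element $2[x]$ is nonzero.

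Third, $2M\neq M$. The basis element $[x]$ does not lie in $2M$, because its coordinate $1$ is not even. Equivalently, $M/2M\cong(\ZZ/2\ZZ)^{|X|}\neq 0$.

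Hence $0\subsetneq 2M\subsetneq M$ is a proper nonzero submodule, which contradicts the simplicity of $M$. Therefore no simple Kleisli convolution representation exists.

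I do not expect a real obstacle here. The only point requiring care is the first step, namely that $2M$ is stable under the $R$-action. Since the action is $\ZZ$-linear (it is given by integer matrices, with $(n1_R)\cdot v=nv$), this stability is immediate.

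Two remarks should be stated explicitly. The argument works uniformly for every ring $R$, including rings in which $2$ is invertible: for such rings, $1/2$ acts on $M$ only through an integer matrix, so that matrix would have to be invertible over $\ZZ$ while equalling $\tfrac12 I$, which is impossible. This is consistent with the conclusion that no such representation exists at all. Finally, the statement is about the module $M_{\varrho}$, so transporting it along the equivalence $F_R$ is harmless, because simplicity is invariant under isomorphism.
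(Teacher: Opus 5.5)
Your proof is correct and is essentially the paper's argument. The paper likewise takes $nM$ for an integer $n\geq 2$, observes that it is an $R$-submodule, nonzero by torsion-freeness, and proper because a basis element does not lie in $nM$; your specialization to $n=2$ and the side remark about rings in which $2$ is invertible are harmless additions.
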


\begin{proof}
Let $(X,\varrho)$ be nonzero and put $M=M_{\varrho}$.
For an integer $n \>= 2$, the subgroup $nM$ is an $R$-submodule of $M$.
Since the underlying Abelian group of $M$ is free of positive finite rank, it is torsion-free,
and hence $nM \ne 0$. On the other hand, $nM\ne M$, because a basis element of $M$ does not belong to $nM$.
Thus, $0\subsetneq nM\subsetneq M$ is a proper nonzero $R$-submodule. Therefore, $M$ is not simple.
\end{proof}

Propositions~\ref{prop:ring-injective-zero} and \ref{prop:ring-simple-zero} describe a genuine limitation of the matrix Kleisli construction. To include arbitrary injective and simple $R$-modules, one has to use the Eilenberg--Moore category $\Ab^{\scrT_R} \simeq R\text{-}\Mod$ from Proposition \ref{prop:em-ring}.

\section{Algebras and their Kleisli convolution representations}\label{sect:algebra KCR}

\subsection{Algebras and their representations}

We first recall some basic notions of algebras and their representations, see \cite{ARS,Lam}.
Let $\kk$ be a field. A \defines{$\kk$-algebra} $A$ is a ring together with a $\kk$-vector space structure such that
$\lambda(ab)=(\lambda a)b=a(\lambda b)$ holds for all $\lambda\in\kk$ and $a,b\in A$.
In this section, $A$ is a finite-dimensional associative $\kk$-algebra with identity $1_A$.

Let $V$ be a finite-dimensional $\kk$-vector space. A \defines{representation} of $A$ on $V$ is a unital $\kk$-algebra
homomorphism
\[ \omega : A \to \End_{\kk}(V). \]
We also write this representation as a pair $(V,\omega)$.
Equivalently, it is a finite-dimensional left $A$-module whose action is given by {$a\cdot v = \omega(a)(v)$}.
Let $(V,\omega_V)$ and $(W,\omega_W)$ be two representations of $A$.
A homomorphism from $(V,\omega_V)$ to $(W,\omega_W)$ is a $\kk$-linear map $h:V\to W$ satisfying
\[ h\compos\omega_V(a) = \omega_W(a)\compos h \quad\text{for every }a\in A. \]
Thus, these homomorphisms are precisely the homomorphisms of left $A$-modules.
The usual composition of linear maps and the identity maps make all finite-dimensional representations of $A$ into a category,
which is denoted by $\rep(A)$. Notice that the assumption that $\kk$ is algebraically closed is not needed for the constructions below.

\subsection{Kleisli convolution representations of algebras}

For a finite set $X$, Proposition~\ref{prop:matrix-convolution} gives
\[ \KlHom_{\kk}(X)=\Hom_{\Mat_{\kk}}(\bfone,E_X) \cong \Mat_X(\kk). \]

\begin{definition}\rm \label{def:Kl convol rep algebra} \
\begin{enumerate}[label=(\arabic*)]
  \item A \defines{Kleisli convolution representation} of $A$ on a finite set $X$ is a unital $\kk$-algebra homomorphism
  \[ \theta: A\longrightarrow\KlHom_{\kk}(X). \]
  Equivalently, it is a pair $(X,\theta)$ consisting of a finite set $X$ and a unital $\kk$-algebra homomorphism $\theta$ as above.
  \label{def:Kl convol rep algebra 1}

  \item Let $(X,\theta_X)$ and $(Y,\theta_Y)$ be two Kleisli convolution
  representations of $A$. A \defines{homomorphism} from $(X,\theta_X)$ to
  $(Y,\theta_Y)$ is a Kleisli arrow $f:X\rightsquigarrow Y$ in
  $\Kl(\scrM_{\kk})$ such that
  \[ \Phi_Y(\theta_Y(a))[f]=[f]\Phi_X(\theta_X(a)) \quad\text{for every }a\in A.  \]
  \label{def:Kl convol rep algebra 2}
\end{enumerate}
\end{definition}

\begin{theorem}\label{thm:algebra-KCrep-category}
All Kleisli convolution representations of $A$ and all homomorphisms between them form a $\kk$-linear category $\KCrep(A)$.
\end{theorem}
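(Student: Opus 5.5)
The plan mirrors the proofs of Theorems \ref{thm:linear KLcrep gp} and \ref{thm:ring-KCrep-category}, with $\ZZ$ replaced by $\kk$ and ring homomorphisms replaced by unital $\kk$-algebra homomorphisms. Every identity needed is a statement about matrices over $\kk$, obtained through the algebra isomorphisms $\Phi_X:\KlHom_{\kk}(X)\to\Mat_X(\kk)$ of Lemma \ref{lemm:KlHom-alg} (equivalently Proposition \ref{prop:matrix-convolution} with $S=\kk$).

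First, I would check that $\KCrep(A)$ is a category. I take composition to be Kleisli composition in $\Kl(\scrM_{\kk})$; by \eqref{eq:linear-Kleisli-composition} it is matrix multiplication, $[h\compos_{\Kl}f]=[h][f]$. If $f$ and $h$ are homomorphisms, then
\[\Phi_Z(\theta_Z(a))[h][f]=[h]\Phi_Y(\theta_Y(a))[f]=[h][f]\Phi_X(\theta_X(a))\]
for every $a\in A$, so $h\compos_{\Kl}f$ is again a homomorphism. The Kleisli identity $\eta^{\semimod_{\kk}}_X$ has matrix $I_X$, which commutes with every $\Phi_X(\theta_X(a))$, so it is an endomorphism of $(X,\theta_X)$. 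Associativity and the unit laws are inherited directly from $\Kl(\scrM_{\kk})$.

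Second, I would establish $\kk$-linearity. For homomorphisms $f,g:(X,\theta_X)\to(Y,\theta_Y)$ and $\lambda\in\kk$, the Kleisli arrows $f+g$ and $\lambda f$ are defined pointwise in $\semimod_{\kk}(Y)$, so their matrices are $[f]+[g]$ and $\lambda[f]$. The intertwining relation $\Phi_Y(\theta_Y(a))[f]=[f]\Phi_X(\theta_X(a))$ is linear in $[f]$, so each Hom-set is a $\kk$-subspace of $\Hom_{\Kl(\scrM_{\kk})}(X,Y)\cong\kk^{Y\times X}$. Composition is $\kk$-bilinear because matrix multiplication is. I would also note that $\KCrep(A)$ is additive, as in Theorem \ref{thm:ring-KCrep-category}: the zero object is $(\emptyset,\theta_\emptyset)$ with $\Mat_\emptyset(\kk)=0$, and the biproduct is $X\sqcup Y$ with the block-diagonal representation $\Phi_{X\sqcup Y}^{-1}\bigl(\mathrm{diag}(\Phi_X\theta_X(a),\Phi_Y\theta_Y(a))\bigr)$. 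This is a unital $\kk$-algebra homomorphism because $\Phi_{X\sqcup Y}$ is an algebra isomorphism and block-diagonal embedding is an algebra map. The inclusions and projections have identity/zero block matrices and clearly intertwine.

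I do not expect a serious obstacle. The only point needing care is consistency of conventions. The vector-space structure on Kleisli arrows $X\rightsquigarrow Y$ must be the pointwise one in $\semimod_{\kk}(Y)$ so that it matches matrix addition and scaling. The scalar $\kk$-action must be compatible with $\Phi_X$ being $\kk$-linear, which is already part of Lemma \ref{lemm:KlHom-alg}. With these fixed, everything reduces to elementary matrix identities.
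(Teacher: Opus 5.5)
Your proposal is correct and follows the paper's proof essentially step by step. The steps are: closure of homomorphisms under Kleisli composition via $[h\compos_{\Kl}f]=[h][f]$, the Kleisli identity as $I_X$, associativity and unit laws inherited from $\Kl(\scrM_{\kk})$, and $\kk$-linearity from the linearity of the intertwining condition together with bilinearity of matrix multiplication. Your additional additivity remark (zero object on $\emptyset$, block-diagonal biproducts) goes beyond what the paper proves here, but it is correct and is not needed for the statement.
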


\begin{proof}
Let $f:(X,\theta_X)\to(Y,\theta_Y)$ and $h:(Y,\theta_Y)\to(Z,\theta_Z)$ be two homomorphisms.
Define their composition to be the Kleisli composition $h\compos_{\Kl}f:X\rightsquigarrow Z$.
By \eqref{eq:linear-Kleisli-composition}, we have $[h\compos_{\Kl}f]=[h][f]$.
Then, for every $a\in A$, we have
\begin{align*}
 \Phi_Z(\theta_Z(a))[h\compos_{\Kl}f]
 &=\Phi_Z(\theta_Z(a))[h][f]\\
 &=[h]\Phi_Y(\theta_Y(a))[f]\\
 &=[h][f]\Phi_X(\theta_X(a))\\
 &=[h\compos_{\Kl}f]\Phi_X(\theta_X(a)).
\end{align*}
Thus, $h\compos_{\Kl}f$ is a homomorphism from $(X,\theta_X)$ to $(Z,\theta_Z)$.

For every object $(X,\theta_X)$, the Kleisli identity $\eta_X^{\semimod_{\kk}}:X\rightsquigarrow X$ has the identity matrix
$I_X$. Therefore, we have $\Phi_X(\theta_X(a))I_X=I_X\Phi_X(\theta_X(a))$ for every $a\in A$,
and so $\eta_X^{\semimod_{\kk}}$ is a homomorphism from $(X,\theta_X)$ to itself.
The associativity and identity laws follow from those in $\Kl(\scrM_{\kk})$. Thus, $\KCrep(A)$ is a category.

Let $f$ and $g$ be two homomorphisms from $(X,\theta_X)$ to $(Y,\theta_Y)$ and let $\lambda\in\kk$.
Direct calculation implies $\Phi_Y(\theta_Y(a))([f]+[g])=([f]+[g])\Phi_X(\theta_X(a))$ and
$\Phi_Y(\theta_Y(a))(\lambda[f]) =(\lambda[f])\Phi_X(\theta_X(a))$.
Then every Kleisli Hom-space is a $\kk$-linear space.
Since matrix multiplication is bilinear, the composition is $\kk$-bilinear.
Therefore, $\KCrep(A)$ is a $\kk$-linear category.
\end{proof}

\begin{theorem}\label{thm:algebra-equivalence}
Let $A$ be a finite-dimensional $\kk$-algebra. Then there is a $\kk$-linear equivalence $\KCrep(A) \simeq \rep(A)$.
\end{theorem}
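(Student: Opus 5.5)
The plan is to copy the proof of Theorem \ref{theo:KC-linear} (and of Theorem \ref{thm:ring-free}), replacing $\kk G$ by $A$. We construct a functor $\mathcal F_A:\KCrep(A)\to\rep(A)$ and show that it is $\kk$-linear, faithful, full and essentially surjective.

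\emph{Objects.} Let $(X,\theta_X)$ be an object. By Proposition \ref{prop:matrix-convolution} (equivalently Lemma \ref{lemm:KlHom-alg}), $\Phi_X$ is an isomorphism of $\kk$-algebras. Hence $\Phi_X\compos\theta_X:A\to\Mat_X(\kk)$ is a unital $\kk$-algebra homomorphism. We set $\mathcal F_A(X,\theta_X)=\semimod_{\kk}(X)$, with action $a\cdot v=\Phi_X(\theta_X(a))v$. Since $X$ is finite, this space is finite-dimensional, and the algebra-homomorphism properties give the module axioms, so it lies in $\rep(A)$.

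\emph{Morphisms.} A homomorphism $f$ is sent to the linear map $[x]\mapsto f(x)$, whose matrix is $[f]$. The intertwining condition of Definition \ref{def:Kl convol rep algebra} \ref{def:Kl convol rep algebra 2} is exactly $A$-linearity of this map. By \eqref{eq:linear-Kleisli-composition} and because the Kleisli identity has matrix $I_X$, $\mathcal F_A$ preserves composition and identities. It is $\kk$-linear on Hom-spaces because $f\mapsto[f]$ is linear.

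\emph{Fully faithful.} Faithfulness holds because a Kleisli arrow $X\rightsquigarrow Y$ is determined by its matrix. For fullness, take an $A$-module map $h:\semimod_{\kk}(X)\to\semimod_{\kk}(Y)$. Its matrix in the bases $\{[x]\}$ and $\{[y]\}$ defines a unique Kleisli arrow $f$. Then $A$-linearity of $h$ is precisely the intertwining condition, so $f$ is a morphism in $\KCrep(A)$ with $\mathcal F_A(f)=h$.

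\emph{Essential surjectivity.} Let $V\in\rep(A)$ and choose a basis $\{v_x\}_{x\in X}$ indexed by a finite set $X$. This is the only place where working over a field matters, unlike Example \ref{ex:ring-obstruction}. Let $[a]_X$ be the matrix of $v\mapsto av$ in this basis. Then $a\mapsto[a]_X$ is a unital $\kk$-algebra homomorphism, and we put $\theta_X=\Phi_X^{-1}\compos[-]_X$. The map $[x]\mapsto v_x$ is then an $A$-module isomorphism $\mathcal F_A(X,\theta_X)\cong V$.

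I expect no real obstacle. The one point needing care is essential surjectivity, where every finite-dimensional module must admit a finite basis so that it fits the matrix model. Over a field this is automatic, and this is exactly why the freeness restriction of Theorem \ref{thm:ring-free} disappears here.
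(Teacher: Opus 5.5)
Your proposal is correct and follows essentially the same route as the paper's proof. Both send $(X,\theta_X)$ to $\semimod_{\kk}(X)$ with the matrix action $a\cdot v=\Phi_X(\theta_X(a))v$, obtain full faithfulness from the bijection between Kleisli arrows and matrices, and prove essential surjectivity by choosing a finite basis and setting $\theta=\Phi_X^{-1}\compos\sigma_M$.
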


\begin{proof}
By Theorem \ref{thm:algebra-KCrep-category}, $\KCrep(A)$ is a $\kk$-linear category.
Now, we need to establish a functor $F_A: \KCrep(A)\to \rep(A)$ from $\KCrep(A)$ to $\rep(A)$.
Let $(X,\theta_X)$ be an object of $\KCrep(A)$, and define
\[ F_A(X,\theta_X)=\semimod_{\kk}(X). \]
By \eqref{defeq:semimod}, this is a finite-dimensional $\kk$-vector space with basis $\{[x]:x\in X\}$.
Let $V=\semimod_{\kk}(X)$. By Lemma \ref{lemm:KlHom-alg}, define an $A$-action $A\times V \to V$ by $a\cdot v=\Phi_X(\theta_X(a))v$.
Since $\theta_X$ and $\Phi_X$ are unital $\kk$-algebra homomorphisms,
we have $(a+b)\cdot v=a\cdot v+b\cdot v$, $a\cdot(v+w)=a\cdot v+a\cdot w$,
$(\lambda a)\cdot v=\lambda(a\cdot v)$, $(ab)\cdot v=a\cdot(b\cdot v)$, and $1_A\cdot v =v$
for all $\lambda\in\kk$, $a,b\in A$, $v,w\in V$.
Thus, $F_A(X,\theta_X)$ is a finite-dimensional left $A$-module.

Let $f\colon(X,\theta_X)\to(Y,\theta_Y)$ be a homomorphism.
Define $F_A(f):\semimod_{\kk}(X) \to \semimod_{\kk}(Y)$, $[x]\mapsto f(x)$.
Its matrix with respect to the bases indexed by $X$ and $Y$ is $[f]$.
For $a\in A$ and $v\in\semimod_{\kk}(X)$, we have
\begin{align*}
   F_A(f)(a\cdot v)
&= [f]\Phi_X(\theta_X(a))v \\
&= \Phi_Y(\theta_Y(a))[f]v \\
&= a\cdot F_A(f)(v)
\end{align*}
by Definition \ref{def:Kl convol rep algebra} \ref{def:Kl convol rep algebra 2}.
Thus, $F_A(f)$ is $A$-linear. By \eqref{eq:linear-Kleisli-composition},
$F_A$ preserves compositions and identities.
It also preserves addition and scalar multiplication of homomorphisms.
Thus, $F_A$ is a $\kk$-linear functor.

Let $f$ and $g$ be two homomorphisms from $(X,\theta_X)$ to $(Y,\theta_Y)$ such that $F_A(f)=F_A(g)$.
Then $[f]=[g]$. A Kleisli arrow in $\Kl(\scrM_{\kk})$ is uniquely determined by its matrix, and then we have $f=g$.
It follows that $F_A$ is faithful.

Take an $A$-linear map $h\colon F_A(X,\theta_X)\to F_A(Y,\theta_Y)$.
Let $[h]$ be its matrix with respect to the bases indexed by $X$ and $Y$.
There is a unique Kleisli arrow $f:X\rightsquigarrow Y$ such that $[f]=[h]$.
Since $h$ is $A$-linear, for every $a\in A$ we have $[f]\Phi_X(\theta_X(a))=\Phi_Y(\theta_Y(a))[f]$.
Thus, by Definition \ref{def:Kl convol rep algebra} \ref{def:Kl convol rep algebra 2},
$f$ is a homomorphism in $\KCrep(A)$, and $F_A(f)=h$.
It follows that $F_A$ is full.

Finally, let $M$ be a finite-dimensional left $A$-module.
Choose a basis $\{m_x\mid x\in X\}$ of $M$.
The action of $A$ gives a unital algebra homomorphism $\sigma_M:A\to \Mat_X(\kk)$,
where $\sigma_M(a)$ is the matrix of the linear map $m\mapsto a\cdot m$.
Thus, we have $\sigma_M(a+b)=\sigma_M(a)+\sigma_M(b)$,
$\sigma_M(\lambda a)=\lambda\sigma_M(a)$, $\sigma_M(ab)=\sigma_M(a)\sigma_M(b)$, and $\sigma_M(1_A)=I_X$.
By Lemma \ref{lemm:KlHom-alg}, define $\theta_M=\Phi_X^{-1}\compos\sigma_M: A \to \KlHom_{\kk}(X)$.
Then $\theta_M$ is a unital $\kk$-algebra homomorphism,
and so $(X,\theta_M)$ is an object of $\KCrep(A)$.
The linear isomorphism $\theta: \semimod_{\kk}(X)\to M$, $[x] \mapsto m_x$
satisfies $\theta(\Phi_X(\theta_M(a))v)=a\cdot\theta(v)$ for all $a\in A$ and $v\in\semimod_{\kk}(X)$.
Hence it is a left $A$-module isomorphism $F_A(X,\theta_M)\cong M$.
This proves that $F_A$ is essentially surjective.
Since it is also full and faithful, $F_A$ is a $\kk$-linear equivalence.
\end{proof}

We immediately have the following two corollaries.

\begin{corollary}\label{cor:algebra-KCrep-Abelian}
The category $\KCrep(A)$ is a Hom-finite Abelian $\kk$-category. It is also a Krull--Schmidt category.
\end{corollary}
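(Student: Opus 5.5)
The plan is to transport structure along the $\kk$-linear equivalence $F_A:\KCrep(A)\to\rep(A)$ of Theorem \ref{thm:algebra-equivalence}. All three properties in question (being Abelian, Hom-finiteness, and the Krull--Schmidt property) are invariant under $\kk$-linear equivalences of categories. It therefore suffices to recall that $\rep(A)$ has them, and then check that each one passes across an equivalence.

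First, $\rep(A)$ is Abelian. It is the full subcategory of $A$-$\Mod$ consisting of finite-dimensional modules. This subcategory is closed under kernels, cokernels and finite direct sums, because submodules and quotients of finite-dimensional modules are finite-dimensional. Being $\kk$-linear and Abelian is a property that transfers along an equivalence: we choose a quasi-inverse $G_A$, and kernels and cokernels in $\KCrep(A)$ are obtained by applying $G_A$ to those in $\rep(A)$. The universal properties follow from full faithfulness, and the condition that the canonical map from coimage to image is an isomorphism is detected by $F_A$.

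Second, Hom-finiteness. For objects $(X,\theta_X)$ and $(Y,\theta_Y)$, the Hom-space is by definition a $\kk$-subspace of the set of Kleisli arrows $X\rightsquigarrow Y$, which is identified with $\Mat_{Y\times X}(\kk)$. Its dimension is therefore at most $|X|\,|Y|<\infty$. This can be seen directly, or alternatively by noting that $\Hom_A(M,N)\subseteq\Hom_{\kk}(M,N)$.

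Third, the Krull--Schmidt property. The standard criterion is that a Hom-finite $\kk$-linear category in which idempotents split is Krull--Schmidt. Here idempotents split because the category is Abelian: an idempotent $e$ has a kernel, and this yields the decomposition via $\ker e\oplus\ker(1-e)$. In a Hom-finite setting the endomorphism algebra of an indecomposable object is a finite-dimensional local algebra, since it has no nontrivial idempotents. Every object decomposes into finitely many indecomposables, by induction on $|X|$ (the dimension of $\semimod_{\kk}(X)$). Uniqueness of the decomposition then follows from the Krull--Schmidt--Azumaya theorem.

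The only delicate point is making sure the transfer of Abelianness is stated correctly. Everything else is standard, so the expected argument is short: cite Theorem \ref{thm:algebra-equivalence} and the classical facts about $\rep(A)$ (e.g.\ \cite{ARS}).
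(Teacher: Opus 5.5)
Your proposal is correct and takes the same route as the paper, whose proof simply invokes the equivalence $\KCrep(A)\simeq\rep(A)$ of Theorem~\ref{thm:algebra-equivalence} and the classical fact that $\rep(A)$ is a Hom-finite Abelian Krull--Schmidt $\kk$-category. Your extra checks fill in details the paper leaves implicit: transferring kernels and cokernels via a quasi-inverse, the bound $\dim\le |X|\,|Y|$, and the argument via split idempotents and local endomorphism algebras with induction on $|X|$.
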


\begin{proof}
By Theorem \ref{thm:algebra-equivalence}, $\KCrep(A)$ is equivalent to $\rep(A)$.
It is well-known that $\rep(A)$ is both a Hom-finite Abelian $\kk$-category and a Krull--Schmidt category (since $A$ is a finite-dimensional algebra).
\end{proof}

\begin{corollary}\label{cor:group-from-algebra}
For every group $G$, we have $\KCrep_{\kk}(G) \simeq \rep(\kk G)$, where $\rep(\kk G)$ denotes the category of finite-dimensional left $\kk G$-modules.
\end{corollary}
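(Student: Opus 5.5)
The plan is to derive this from Theorem \ref{thm:algebra-equivalence}, applied to the finite-dimensional case, together with a direct comparison of the two definitions. The subtlety is that $\kk G$ is finite-dimensional only when $G$ is finite. For an arbitrary group we therefore cannot just quote Theorem \ref{thm:algebra-equivalence}. Instead, I would observe that its proof never used $\dim_{\kk}A<\infty$: only the finiteness of $X$ and the modules mattered. So the cleanest route is to compare $\KCrep_{\kk}(G)$ with $\KCrep(\kk G)$ directly, and then run the argument of Theorem \ref{thm:algebra-equivalence} verbatim for $A=\kk G$.

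First, I will note that Definition \ref{def:linear Kl convol repr gp} and Definition \ref{def:Kl convol rep algebra} coincide when $A=\kk G$. The objects are in both cases pairs $(X,\rho)$ with $X$ finite and $\rho:\kk G\to\KlHom_{\kk}(X)$ a (unital) $\kk$-algebra homomorphism. The morphisms are in both cases Kleisli arrows $f:X\rightsquigarrow Y$ with $\Phi_Y(\rho_Y(a))[f]=[f]\Phi_X(\rho_X(a))$ for all $a\in\kk G$. Composition and identities are Kleisli composition and $\eta^{\semimod_{\kk}}_X$. So $\KCrep_{\kk}(G)=\KCrep(\kk G)$ literally, where the latter is defined by the same recipe even if $\kk G$ is infinite-dimensional. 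One small point to check is unitality: an algebra homomorphism in Definition \ref{def:linear Kl convol repr gp} should be read as unital. Otherwise $\rho(e)$ is only an idempotent, and the identification with modules would fail. I will make this convention explicit.

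Second, I will invoke the functor $\mathcal F_{\kk}$ of Theorem \ref{theo:KC-linear}. It already gives $\KCrep_{\kk}(G)\simeq\rep_{\kk}(G)$. Then I will identify $\rep_{\kk}(G)$ with $\rep(\kk G)$, the finite-dimensional left $\kk G$-modules. This identification is the standard one recalled in Section \ref{sect:group KCR}. A homomorphism $G\to\operatorname{GL}(V)$ extends uniquely and linearly to a unital algebra map $\kk G\to\End_{\kk}(V)$. Conversely, such an algebra map restricts to $G$ with image in $\operatorname{GL}(V)$, since each $g$ is invertible in $\kk G$. Morphisms commuting with all $\rho_V(g)$ commute with all of $\kk G$ by linearity, and vice versa. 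Composing the two equivalences gives the claim.

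The only real obstacle is the first step's bookkeeping: making sure that nothing in Theorem \ref{thm:algebra-equivalence} or Theorem \ref{theo:KC-linear} secretly needs $G$ finite. In particular, the essential surjectivity step only chooses a basis of the finite-dimensional module $V$, not of $\kk G$, so it goes through unchanged. When $G$ is finite, the statement is of course immediate from Theorem \ref{thm:algebra-equivalence} with $A=\kk G$.
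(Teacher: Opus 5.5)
Your proposal is correct, and your second step is exactly the paper's argument: Theorem \ref{theo:KC-linear} gives $\KCrep_{\kk}(G)\simeq\rep_{\kk}(G)$, and $\rep_{\kk}(G)$ is the category of finite-dimensional left $\kk G$-modules, with no finiteness of $G$ needed because only $X$ and the modules are finite. Your first step, identifying $\KCrep_{\kk}(G)$ with $\KCrep(\kk G)$ and rerunning Theorem \ref{thm:algebra-equivalence}, is valid but not needed. Your remark that the algebra homomorphisms must be unital is a worthwhile clarification that the paper leaves implicit.
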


\begin{proof}
This follows immediately from Theorem \ref{theo:KC-linear}. Notice that
the group algebra $\kk G$ need not be finite-dimensional; only the represented modules are required to be finite-dimensional.
\end{proof}

\subsection{Special Kleisli convolution representations}

We next describe projective, injective and simple objects in $\KCrep(A)$.
Since Theorem~\ref{thm:algebra-equivalence} is an equivalence with the
Abelian category $\rep(A)$, the three usual classes of finite-dimensional
$A$-modules are completely represented in the Kleisli convolution category.

\subsubsection{Projective Kleisli convolution representations}

\begin{definition} \rm
An object $(X,\theta)$ of $\KCrep(A)$ is called a \defines{projective Kleisli convolution representation} if
$F_A(X,\theta)$ is a projective left $A$-module.
We denote the full subcategory consisting of all such objects by $\KCrep^{\mathrm{proj}}(A)$.
\end{definition}

\begin{proposition}\label{prop:algebra-projective}
The equivalence given in Theorem \ref{thm:algebra-equivalence} restricts to an equivalence
\[ \KCrep^{\mathrm{proj}}(A)\simeq\mathsf{proj}(A), \]
where $\mathsf{proj}(A)$ is the category of finite-dimensional projective left $A$-modules.
\end{proposition}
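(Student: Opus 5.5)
The plan is to restrict the equivalence $F_A:\KCrep(A)\to\rep(A)$ from Theorem \ref{thm:algebra-equivalence} to full subcategories, arguing exactly as in Proposition \ref{prop:ring-flat}. By definition, $\KCrep^{\mathrm{proj}}(A)$ is the full subcategory of $\KCrep(A)$ consisting of those $(X,\theta)$ for which $F_A(X,\theta)$ is projective. Hence $F_A$ sends $\KCrep^{\mathrm{proj}}(A)$ into $\mathsf{proj}(A)$ by construction.

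Any restriction of a fully faithful functor to a full subcategory is again fully faithful. So the restricted functor $\KCrep^{\mathrm{proj}}(A)\to\mathsf{proj}(A)$ is full and faithful, because Hom-sets on both sides are the same as in $\KCrep(A)$ and $\rep(A)$. It is also $\kk$-linear, since $F_A$ is.

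It remains to check essential surjectivity. Let $P$ be a finite-dimensional projective left $A$-module. The last step of the proof of Theorem \ref{thm:algebra-equivalence} gives an object $(X,\theta_P)$ of $\KCrep(A)$, built by choosing a basis of $P$, together with an isomorphism $F_A(X,\theta_P)\cong P$ of left $A$-modules. Projectivity is invariant under isomorphism in $\rep(A)$, and also in $A\text{-}\Mod$, so $F_A(X,\theta_P)$ is projective. Therefore $(X,\theta_P)$ lies in $\KCrep^{\mathrm{proj}}(A)$, and the restricted functor is essentially surjective.

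There is no genuine obstacle here; the one point to check is what "projective" means. The definition uses projectivity as a left $A$-module, whereas $\mathsf{proj}(A)$ could be read as projective objects of $\rep(A)$. For finite-dimensional modules these two notions agree: a finite-dimensional module is projective in $\rep(A)$ iff it is a direct summand of some $A^n$ iff it is projective in $A\text{-}\Mod$. I will note this standard fact to close that gap.
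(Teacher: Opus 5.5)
Your argument is correct, but it is organized differently from the paper's proof. The paper spends its proof showing that $F_A$ preserves and reflects projectivity in the intrinsic categorical sense. If $F_A(X,\theta)$ is a projective module, then $(X,\theta)$ has the lifting property against epimorphisms of $\KCrep(A)$: transport the lifting problem through $F_A$, solve it in $\rep(A)$, and pull the solution back by fullness and faithfulness. The converse uses a quasi-inverse. The paper then recalls that finite-dimensional projectives are the direct summands of some $A^n$, and leaves essential surjectivity onto $\mathsf{proj}(A)$ implicit.

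You instead take the definition of $\KCrep^{\mathrm{proj}}(A)$ at face value: membership is tested on $F_A(X,\theta)$. So full faithfulness of the restriction is inherited from $F_A$ automatically, and the only real content is essential surjectivity. You obtain that from the construction in Theorem \ref{thm:algebra-equivalence} together with isomorphism invariance of projectivity.

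Your route is shorter and makes the essential surjectivity step explicit, which the paper does not. The paper's route gives extra information: $\KCrep^{\mathrm{proj}}(A)$ is exactly the full subcategory of projective objects of $\KCrep(A)$ itself. That justifies the name, but it is not needed for the statement as formulated. Your closing remark, comparing projectivity in $\rep(A)$ and in $A\text{-}\Mod$ via summands of $A^n$, plays the same role as the paper's final paragraph.
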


\begin{proof}
An equivalence preserves and reflects projective objects.
To give the argument explicitly, suppose that $F_A(X,\theta)$ is projective.
For any epimorphism $F\colon(Y,\sigma)\to(Z,\tau)$ and any morphism $H\colon(X,\theta)\to(Z,\tau)$,
the morphism $F_A(F)$ is an epimorphism in $\rep(A)$. Projectivity produces an $A$-linear map
\[ h\colon F_A(X,\theta) \to F_A(Y,\sigma) \]
such that $F_A(F)h=F_A(H)$. By fullness, $h$ is the image of a Kleisli convolution morphism $\widetilde h$.
By faithfulness, $F\widetilde h=H$. Thus, $(X,\theta)$ is projective in $\KCrep(A)$.
The converse follows by applying a quasi-inverse of $F_A$.

Every finite-dimensional projective left $A$-module is a direct summand of $A^n$ for some $n$.
Conversely, every direct summand of $A^n$ is projective.
This proves the result.
\end{proof}

\subsubsection{Injective Kleisli convolution representations}

\begin{definition} \rm
An object $(X,\theta)$ of $\KCrep(A)$ is called an \defines{injective Kleisli convolution representation}
if $F_A(X,\theta)$ is an injective left $A$-module. We denote the full subcategory consisting of all such objects by $\KCrep^{\mathrm{inj}}(A)$.
\end{definition}

Let $D=\Hom_{\kk}(-,\kk)$ be the standard duality. Recall that $D(A_A)$ is an injective cogenerator in $\rep(A)$.

\begin{proposition}\label{prop:algebra-injective}
The equivalence given in Theorem \ref{thm:algebra-equivalence} restricts to an equivalence
\[ \KCrep^{\mathrm{inj}}(A)\simeq\mathsf{inj}(A),\]
where $\mathsf{inj}(A)$ is the category of finite-dimensional injective left $A$-modules.
\end{proposition}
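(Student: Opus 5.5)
The plan is to mirror the proof of Proposition \ref{prop:algebra-projective}, since the statement is again a restriction of the equivalence $F_A$ of Theorem \ref{thm:algebra-equivalence}. The argument has two parts: first show that $F_A$ preserves and reflects injective objects, and then identify the essential image.

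\textbf{Step 1: $F_A$ reflects injectivity.} Suppose $F_A(X,\theta)$ is an injective left $A$-module. Take a monomorphism $F\colon(Y,\sigma)\to(Z,\tau)$ in $\KCrep(A)$ and a morphism $H\colon(Y,\sigma)\to(X,\theta)$.
\begin{itemize}
\item Since $F_A$ is an equivalence, it preserves monomorphisms. Concretely, $\KCrep(A)$ is Abelian by Corollary \ref{cor:algebra-KCrep-Abelian}, and a monomorphism there is sent to a monomorphism in $\rep(A)$.
\item So $F_A(F)$ is injective $A$-linear. Injectivity of $F_A(X,\theta)$ gives an $A$-linear map $h\colon F_A(Z,\tau)\to F_A(X,\theta)$ with $h\compos F_A(F)=F_A(H)$.
\item By fullness, $h=F_A(\widetilde h)$ for some morphism $\widetilde h$ in $\KCrep(A)$.
\item By faithfulness, $\widetilde h\compos_{\Kl}F=H$.
\end{itemize}
Hence $(X,\theta)$ is an injective object of $\KCrep(A)$.

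\textbf{Step 2: $F_A$ preserves injectivity.} For the converse, I would apply a quasi-inverse of $F_A$, which is again an equivalence. Such a quasi-inverse is obtained from the essential-surjectivity construction $M\mapsto(X,\theta_M)$ in the proof of Theorem \ref{thm:algebra-equivalence}. The same lifting argument then runs in the reverse direction.

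\textbf{Step 3: the essential image.} The point to check is that injectivity in $\rep(A)$ agrees with injectivity in $A\text{-}\Mod$, because the definition of an injective Kleisli convolution representation refers to left $A$-modules.
\begin{itemize}
\item Use the duality $D=\Hom_{\kk}(-,\kk)\colon\rep(A^{\mathrm{op}})\to\rep(A)$. A finite-dimensional module $I$ is injective in $\rep(A)$ exactly when $D I$ is projective in $\rep(A^{\mathrm{op}})$.
\item That holds exactly when $D I$ is a summand of $(A_A)^n$, that is, exactly when $I$ is a summand of $D(A_A)^n$.
\item Since $D(A_A)\cong\Hom_{\kk}(A,\kk)$ is injective in $A\text{-}\Mod$ by the adjunction $\Hom_A(-,\Hom_{\kk}(A,\kk))\cong\Hom_{\kk}(-,\kk)$ and exactness of $\Hom_{\kk}(-,\kk)$, the two notions agree.
\end{itemize}
Therefore the essential image of the restricted functor is exactly $\mathsf{inj}(A)$. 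The restricted functor is still full and faithful, so it is an equivalence.

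I expect Step 3 to be the only place needing care, namely reconciling injectivity in $\rep(A)$ with injectivity in $A\text{-}\Mod$. The standard duality argument above should settle it.
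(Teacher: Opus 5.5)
Your proposal is correct and follows essentially the same route as the paper. Like the paper, you transport the extension property through the fully faithful, mono-preserving functor $F_A$ and its quasi-inverse, and then identify the finite-dimensional injectives as the direct summands of $D(A_A)^n$. Your Step 3 check that injectivity in $\rep(A)$ agrees with injectivity in $A\text{-}\Mod$ is a detail the paper leaves implicit, and your duality-and-adjunction argument for it is sound.
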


\begin{proof}
The proof is dual to that of Proposition \ref{prop:algebra-projective}.
More precisely, an object is injective if every morphism from a subobject extends to the containing object.
Since $F_A$ and its quasi-inverse preserve monomorphisms, and since they are full and faithful, this extension
property holds for $(X,\theta)$ if and only if it holds for $F_A(X,\theta)$.

Every finite-dimensional injective left $A$-module is a direct summand of a finite direct sum of copies of the injective cogenerator $D(A_A)$.
Conversely, finite direct sums and direct summands of injective modules are injective.
Transporting these statements through $F_A$ gives the result.
\end{proof}


\subsubsection{Simple Kleisli convolution representations}

\begin{definition} \rm
A nonzero object $(X,\theta)$ of $\KCrep(A)$ is called a \defines{simple Kleisli convolution representation} if $F_A(X,\theta)$ is a simple left $A$-module.
\end{definition}

\begin{proposition}\label{prop:algebra-simple}
The equivalence given in Theorem \ref{thm:algebra-equivalence} induces a bijection
between the isomorphism classes of simple objects of $\KCrep(A)$ and the isomorphism classes of simple left $A$-modules.
Equivalently, a nonzero object $(X,\theta)$ is simple if and only if the only subspaces $W\subseteq\kk^{\oplus X}$ satisfying $\Phi_X(\theta(a)) W\subseteq W$ for all $a\in A$ are $0$ and $\kk^{\oplus X}$.
\end{proposition}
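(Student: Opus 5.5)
The plan is to transport simplicity through the $\kk$-linear equivalence $F_A:\KCrep(A)\to\rep(A)$ of Theorem \ref{thm:algebra-equivalence}. I would then translate the module-theoretic condition into a condition on matrices. By definition, a nonzero object $(X,\theta)$ is simple when $F_A(X,\theta)=\semimod_{\kk}(X)\cong\kk^{\oplus X}$ is a simple left $A$-module. The $A$-action on this module is $a\cdot v=\Phi_X(\theta(a))v$. So the whole statement reduces to two things: bookkeeping of isomorphism classes under an equivalence, and an unwinding of what a submodule of $F_A(X,\theta)$ is.

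The bijection comes first. Since $F_A$ is full and faithful, two objects $(X,\theta_X)$ and $(Y,\theta_Y)$ are isomorphic in $\KCrep(A)$ if and only if $F_A(X,\theta_X)\cong F_A(Y,\theta_Y)$ in $\rep(A)$. Indeed, an $A$-isomorphism $h$ lifts to a Kleisli arrow $f$, and its inverse lifts to some $g$. Faithfulness then gives $g\compos_{\Kl}f=\eta_X^{\semimod_{\kk}}$ and $f\compos_{\Kl}g=\eta_Y^{\semimod_{\kk}}$. Hence $F_A$ induces an injective map on isomorphism classes, and this map restricts to the classes of simple objects. For surjectivity I would use essential surjectivity, as in the last paragraph of the proof of Theorem \ref{thm:algebra-equivalence}. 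Every simple $A$-module is finite-dimensional, being cyclic over the finite-dimensional algebra $A$, so it lies in $\rep(A)$ and is isomorphic to $F_A(X,\theta_M)$ for some object $(X,\theta_M)$. That object is simple by definition. Since simplicity is defined through $F_A$, no separate argument about simple objects in $\KCrep(A)$ is needed.

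The equivalent matrix formulation is a direct translation. A $\kk$-subspace $W\subseteq\kk^{\oplus X}$ is an $A$-submodule of $F_A(X,\theta)$ exactly when $a\cdot W\subseteq W$ for all $a\in A$, that is, when $\Phi_X(\theta(a))W\subseteq W$. Scalar closure is automatic, because $(\lambda a)\cdot w=\lambda(a\cdot w)$, so submodules are precisely the subspaces stable under all these matrices. Hence $F_A(X,\theta)$ is simple if and only if it is nonzero ($X\neq\emptyset$) and the only such subspaces are $0$ and $\kk^{\oplus X}$.

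I expect no real obstacle. The only point needing care is that the paper defines simplicity through $F_A$ rather than intrinsically. I would briefly remark that this agrees with simplicity in the abelian category $\KCrep(A)$ of Corollary \ref{cor:algebra-KCrep-Abelian}. The reason is that an equivalence preserves monomorphisms and subobject lattices, so an object has only the trivial subobjects if and only if its image does.
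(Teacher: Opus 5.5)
Your proposal is correct and takes essentially the same route as the paper. Both transport simplicity through the full, faithful, essentially surjective functor $F_A$ and identify the $A$-submodules of $\kk^{\oplus X}$ with the subspaces stable under every $\Phi_X(\theta(a))$. Your observation that every simple $A$-module is cyclic, and hence finite-dimensional, covers a point the paper passes over, since the paper only speaks of finite-dimensional simple modules.
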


\begin{proof}
Subobjects of $(X,\theta)$ correspond, under the equivalence given in Theorem \ref{thm:algebra-equivalence}, to $A$-submodules of $\kk^{\oplus X}$.
A vector subspace $W\subseteq\kk^{\oplus X}$ is an $A$-submodule precisely when it is invariant under every matrix $\Phi_X(\theta(a))$. Therefore, $(X,\theta)$ has no proper nonzero subobject if and only if $\kk^{\oplus X}$ has no proper nonzero $A$-submodule.
This is exactly the asserted condition.
Every finite-dimensional simple $A$-module has a finite basis.
The essential surjectivity of $F_A$ therefore gives a simple Kleisli convolution representation representing it.
Fullness and faithfulness show that two such objects are isomorphic exactly when their associated $A$-modules are isomorphic.
\end{proof}

If $\kk$ is algebraically closed and $(X,\theta)$ is simple, then Schur's lemma implies
\[ \End_{\KCrep(A)}(X,\theta) \cong\End_A(F_A(X,\theta)) \cong\kk. \]

\section{Descent and the modular isomorphism problem} \label{sect:MIP}

\textsl{In this section, we consider the effect of changing the coefficient field on regular Kleisli convolution representations.
Using Lang's theorem, we give a sufficient condition under which an isomorphism over a finite extension descends to the original finite field.
Throughout this section, all algebras are associative and unital, and all algebra homomorphisms preserve the identity.
For a field extension $\EE/\kk$ and a $\kk$-algebra $A$, write $A_{\EE}={\EE}\otimes_{\kk}A$.}

\subsection{Group algebras as regular Kleisli convolution subalgebras}

Let $G$ be a finite group and let $\kk$ be a field.
For each $g\in G$, we define an element $\rho_{\kk,G}(g)\in\KlHom_{\kk}(G)$, which is a Kleisli arrow $\rho_{\kk,G}(g):\bfone\rightsquigarrow E_G$, by the formula below.
Here, $\bfone$ is a singleton set whose element is written as $\bullet$,
and $E_G = G\otimes G = G\times G$ since the tensor product of objects in Kleisli category $\Kl(\scrM_{\kk})$ is given by the Cartesian product (i.e., $\otimes = \times$). Define
\[ \rho_{\kk,G}(g)(\bullet) = \sum_{x\in G}[(gx,x)].\]

For a field $\FF$, let $B_{\FF}(G)$ denote the $\FF$-linear span of these elements in $\KlHom_{\FF}(G)$, i.e.,
\[
  B_{\FF}(G)
:= \operatorname{span}_{\FF}
   \{\rho_{\FF,G}(g):g\in G\}
 = \bigg\{\sum_{g\in G}a_g\rho_{\FF,G}(g): a_g\in\FF\bigg\}
 \subseteq\KlHom_{\FF}(G).
\]
We have the following lemma.

\begin{lemma}\label{lemm:regular convolution subalgebra}
The space $B_{\kk}(G)$ is a unital subalgebra of $\KlHom_{\kk}(G)$ under Kleisli convolution.
Moreover, the linear extension of $\rho_{\kk,G}$ induces an isomorphism of $\kk$-algebras
\[  \rho_{\kk,G}:\kk G \to B_{\kk}(G). \]
For every field extension $\EE/\kk$, there is a natural isomorphism $\EE\otimes_{\kk}B_{\kk}(G)\cong B_{\EE}(G)$.
\end{lemma}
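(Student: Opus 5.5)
The plan is to transport everything to matrices via the isomorphism $\Phi_G:\KlHom_{\kk}(G)\to\Mat_G(\kk)$ of Lemma \ref{lemm:KlHom-alg}, where Kleisli convolution is matrix multiplication. Under $\Phi_G$, the element $\rho_{\kk,G}(g)$ goes to the permutation matrix $P_g$ with entries $(P_g)_{y,x}=\delta_{y,gx}$. This is the matrix of left multiplication by $g$ on $\semimod_{\kk}(G)\cong\kk G$ with respect to the basis $\{[x]:x\in G\}$.

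The first step is to check that $P_gP_h=P_{gh}$ and $P_e=I_G$. Indeed,
\[
(P_gP_h)_{y,x}=\sum_{z\in G}\delta_{y,gz}\delta_{z,hx}=\delta_{y,ghx},
\]
and $\Phi_G(u_G)=I_G=P_e$. Pulling back through $\Phi_G$ gives $\rho_{\kk,G}(g)\ast\rho_{\kk,G}(h)=\rho_{\kk,G}(gh)$ and $\rho_{\kk,G}(e)=u_G$. By bilinearity of the convolution, the span $B_{\kk}(G)$ is therefore closed under $\ast$ and contains the unit, so it is a unital subalgebra. The same relations show that the $\kk$-linear extension $\rho_{\kk,G}:\kk G\to\KlHom_{\kk}(G)$ is a unital algebra homomorphism, and its image is $B_{\kk}(G)$ by definition.

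The second step, which is the only point needing an actual argument, is injectivity. The matrices $P_g$ for $g\in G$ have pairwise disjoint supports: the entry $(gx,x)$ lies in the support of $P_g$ only, since $gx=hx$ forces $g=h$. Hence the $P_g$ are linearly independent. Concretely, if $\sum_g a_g\rho_{\kk,G}(g)=0$, then evaluating at $\bullet$ and reading off the coefficient of $[(g,e)]$ gives $a_g=0$. Therefore $\rho_{\kk,G}:\kk G\to B_{\kk}(G)$ is an isomorphism, and $\dim_{\kk}B_{\kk}(G)=|G|$.

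For the base change, I would use the natural identification $\EE\otimes_{\kk}\KlHom_{\kk}(G)\cong\KlHom_{\EE}(G)$, which corresponds to $\EE\otimes_{\kk}\Mat_G(\kk)\cong\Mat_G(\EE)$ and is an $\EE$-algebra isomorphism because $\Phi$ is compatible with scalar extension entrywise. This map sends $1\otimes\rho_{\kk,G}(g)$ to $\rho_{\EE,G}(g)$, since both have the same $0/1$ coefficients. Restricting it to $\EE\otimes_{\kk}B_{\kk}(G)$ (a subspace, as $\EE$ is flat over $\kk$) gives a map onto $B_{\EE}(G)$ that carries the basis $\{1\otimes\rho_{\kk,G}(g)\}$ to the basis $\{\rho_{\EE,G}(g)\}$, so it is an algebra isomorphism. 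Alternatively, one can compose the isomorphisms
\[
\EE\otimes_{\kk}B_{\kk}(G)\cong\EE\otimes_{\kk}\kk G\cong\EE G\cong B_{\EE}(G).
\]
Naturality in $\EE$ is clear, since every map is defined on the basis indexed by $G$.
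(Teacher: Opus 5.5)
Your proposal is correct and follows essentially the same route as the paper. Both arguments establish $\rho_{\kk,G}(g)\ast\rho_{\kk,G}(h)=\rho_{\kk,G}(gh)$ and $\rho_{\kk,G}(1_G)=u_G$ through the matrix description of Lemma \ref{lemm:KlHom-alg}, obtain injectivity by reading off the coefficient of $[(g,1_G)]$, and get the base change by sending the basis $1\otimes\rho_{\kk,G}(g)$ to $\rho_{\EE,G}(g)$. Your observation that this last map is the restriction of $\EE\otimes_{\kk}\KlHom_{\kk}(G)\cong\KlHom_{\EE}(G)$ simply makes explicit why it is multiplicative, a point the paper leaves to the matching multiplication tables.
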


\begin{proof}
By Lemma \ref{lemm:KlHom-alg}, we have
\[(\rho_{\kk,G}(g)\ast\rho_{\kk,G}(h))(\bullet)
 = \sum_{x\in G}[(ghx,x)]
 = \rho_{\kk,G}(gh)(\bullet)\]
for any $g,h\in G$, and have
\[\rho_{\kk,G}(1_G)=u_G : \bfone \to \semimod_{\kk}(E_G).\]
Then $B_{\kk}(G)$ is a unital subalgebra of $\KlHom_{\kk}(G)$.

Since $\{\rho_{\kk,G}(g):g\in G\}$ spans $B_{\kk}(G)$, the linear extension of $\rho_{\kk,G}$ is surjective.

Next, suppose that $\sum\limits_{g\in G}a_g\rho_{\kk,G}(g)=0$.
Evaluating at $\bullet$, we obtain
$\sum\limits_{g\in G}(a_g\rho_{\kk,G}(g))(\bullet)
= \sum\limits_{g,x\in G}a_g[(gx,x)]=0$.
For each $g\in G$, the coefficient of $[(g,1_G)]$ is $a_g$.
Thus every $a_g$ is zero, and then we get that $\rho_{\kk,G}$ is injective.

Finally, all elements $\rho_{\kk,G}(g)$ form a basis of $B_{\kk}(G)$,
and their multiplication is given by $\rho_{\kk,G}(g)\ast\rho_{\kk,G}(h)=\rho_{\kk,G}(gh)$.
Therefore, the map $\lambda\otimes\rho_{\kk,G}(g)\mapsto \lambda \rho_{\EE,G}(g)$ gives the required isomorphism.
\end{proof}

\begin{definition} \rm
Lemma \ref{lemm:regular convolution subalgebra} shows that $B_{\kk}(G)$ is a finite-dimensional $\kk$-algebra.
We call it a \defines{regular Kleisli convolution subalgebra} of $\KlHom_{\kk}(G)$.
\end{definition}

\begin{remark} \rm
Here, the word ``regular'' comes from the left regular representation of $G$ on $\kk G$.
For each $g\in G$, this representation sends a basis element $x\in G$ to $gx$.
Under the isomorphism $\Phi_G$ in Lemma \ref{lemm:KlHom-alg},
the element $\rho_{\kk,G}(g)$ corresponds precisely to the matrix of this linear map with respect to the basis $G$.
Thus, $B_{\kk}(G)$ is the image of the group algebra under the left regular representation, written in terms of Kleisli convolution.
\end{remark}

\subsection{A descent theorem for finite-dimensional algebras over finite fields}

Let $\kk=\FF_q$ be a finite field and let $\overline{\kk}$ be an algebraic closure of $\kk$.
Let $A$ be a finite-dimensional $\kk$-algebra and assume $\dim_{\kk}A=n$.
After choosing a basis of $A$, every $\overline{\kk}$-algebra automorphism of $A_{\overline{\kk}}$
is represented by an invertible $n\times n$ matrix over $\overline{\kk}$.
Such a matrix corresponds to an algebra automorphism when it preserves multiplication and the identity.
These conditions are polynomial equations in its entries.
To be more precise, choose a basis $e_1,\ldots,e_n$ of $A$
and write $e_ie_j=\sum\limits_{r=1}^n c_{ij}^{r}e_r$ and
$1_A=\sum\limits_{i=1}^n u_i e_i$, where $c_{ij}^{r},~u_i\in\kk$.
Let $(t_{ai})$ be the matrix of an $\overline{\kk}$-linear map $\varphi$,
then we have $\varphi(e_i)=\sum\limits_{a=1}^n t_{ai}e_a$.
Then
\[ \varphi(e_ie_j)=\varphi(e_i)\varphi(e_j) \text{~if and only if~}
  \sum_{\ell=1}^n c_{ij}^{\ell}t_{r\ell}=\sum_{a,b=1}^n t_{ai}t_{bj}c_{ab}^{r}
  ~(1\=< i,j,r\=< n).\]
Similarly,
\begin{center}
  $\varphi(1_A)=1_A$ holds if and only if
$\displaystyle\sum_{i=1}^n u_i t_{ri}=u_r$ for every $1\=< r\=< n$.
\end{center}
These are polynomial equations in the matrix entries $t_{ai}$, with coefficients in $\kk$.
In the case of invertible matrices, their solutions are precisely the $\overline{\kk}$-algebra automorphisms of $A_{\overline{\kk}}$.
These automorphisms (or equivalently, matrices) form an \defines{affine algebraic group},
denoted by $\Aut(A_{\overline{\kk}})$.
We consider its connected components in the Zariski topology,
i.e., the topology whose closed sets are defined by polynomial equations, and set
\[  c(A)=\text{the number of connected components of } \Aut(A_{\overline{\kk}}).\]
Then $c(A)<\infty$ since an affine algebraic variety has only finitely many connected components.
The component containing the identity matrix is called the identity component.

The following proposition follows from Lang's theorem \cite{Lang1956}.
We include its proof to explain how the number $c(A)$ controls the extension degree.

\begin{proposition}\label{prop:finite field descent}
Let $A$ and $B$ be finite-dimensional $\FF_q$-algebras, and let $m$ be a positive integer.
If $\gcd(m,c(A)!)=1$, then $A_{\FF_{q^m}}\cong B_{\FF_{q^m}}$ implies $A\cong B$.
\end{proposition}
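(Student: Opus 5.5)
The argument is Galois descent combined with Lang's theorem. Put $\kk=\FF_q$, $\FF=\FF_{q^m}$, let $\overline{\kk}$ be an algebraic closure, and let $\mathcal G=\Aut(A_{\overline{\kk}})$. This is an affine algebraic group defined over $\kk$ by the polynomial equations above, and it has $c(A)$ connected components. Let $F$ be the $q$-Frobenius acting on coordinates, $(t_{ai})\mapsto(t_{ai}^q)$. Since the structure constants $c_{ij}^r$ and $u_i$ lie in $\kk$, $F$ maps $\mathcal G$ to itself and fixes exactly $\Aut_{\kk}(A)$.

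\textbf{Step 1: from the isomorphism to a cocycle.} Fix an $\FF$-algebra isomorphism $\psi\colon A_{\FF}\to B_{\FF}$. Transport $\psi$ to a matrix by choosing $\kk$-bases of $A$ and $B$; this is possible since $\dim_\kk A=\dim_\kk B$. Let $\sigma$ be the generator of $\mathrm{Gal}(\FF/\kk)$, acting on scalars. Then ${}^{\sigma}\psi:=(\id_B\otimes\sigma)\psi(\id_A\otimes\sigma^{-1})$ is again an isomorphism. Put
\[ g:=\psi^{-1}\compos{}^{\sigma}\psi\in\Aut(A_{\FF})\subseteq\mathcal G(\FF). \]
In coordinates ${}^\sigma\psi=F(\psi)$, so $g=\psi^{-1}F(\psi)$. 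Moreover $F^m$ fixes $\psi$ because $\psi$ has entries in $\FF$. It follows that
\[ g\,F(g)\cdots F^{m-1}(g)=\psi^{-1}F^m(\psi)=1. \]

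\textbf{Step 2: use Lang's theorem to put $g$ in the form $h^{-1}F(h)$ with $h\in\mathcal G(\FF)$.} Let $\mathcal G^\circ$ be the identity component. $F$ permutes the component group $\pi=\mathcal G/\mathcal G^\circ$, of order $c(A)$. Write $\bar g$ for the image of $g$. The norm relation becomes $\bar g\,F(\bar g)\cdots F^{m-1}(\bar g)=1$ in $\pi\rtimes\langle F\rangle$, which says $(\bar gF)^m=F^m$.

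We want $\bar g=\bar h^{-1}F(\bar h)$ for some $\bar h\in\pi^{F^m}$. The elements $\bar gF$ and $F$ each act on $\pi$ as a permutation; call them $\tau$ and $\varphi$, and note that their powers have orders dividing $c(A)!$. Since $\gcd(m,c(A)!)=1$, choose $k$ with $mk\equiv1 \pmod{N}$, where $N$ is a common multiple of the orders of $\tau$ and $\varphi$ in the finite group $\pi\rtimes\langle F\rangle/\langle F^{N'}\rangle$. Then $\bar gF=((\bar gF)^m)^k=(F^m)^k=F$ modulo the appropriate power, which forces $\bar g=1$.

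This step is the main obstacle. The order of $F$ on $\pi$ and the precise finite quotient in which to take the $m$-th root must be handled carefully. I would first try working in the finite group generated by $\pi$ and the image of $F$ in $\Aut(\pi)$, whose order divides $c(A)\cdot c(A)!$. If that fails, I would fall back to the standard fact that $H^1(\langle F\rangle,\pi)$ classes are killed after an extension of degree coprime to the relevant orders.

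Once $\bar g=1$, we have $g\in\mathcal G^\circ(\FF)$. Lang's theorem for the connected group $\mathcal G^\circ$ with the Frobenius $F$ gives $h\in\mathcal G^\circ$ with $g=h^{-1}F(h)$. Since $F^m(g)=g$, we can moreover choose $h$ over $\FF$. To see this, use the finiteness of $\mathcal G^\circ(\FF)$ together with the vanishing of $H^1(\FF/\kk,\mathcal G^\circ)$, which is the standard consequence of Lang's theorem.

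\textbf{Step 3: descend.} Set $\psi'=\psi h^{-1}\colon A_{\FF}\to B_{\FF}$. Then
\[ F(\psi')=F(\psi)F(h)^{-1}=\psi g\,g^{-1}h^{-1}=\psi h^{-1}=\psi', \]
so $\psi'$ is $\sigma$-invariant. A $\sigma$-invariant $\FF$-linear map $A_{\FF}\to B_{\FF}$ has matrix with entries in $\FF^{\langle\sigma\rangle}=\kk$. Therefore $\psi'$ restricts to a $\kk$-algebra isomorphism $A\cong B$, because multiplicativity and unitality are inherited from $\psi'$.
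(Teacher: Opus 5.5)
Your proof is correct and is essentially the paper's argument. Your cocycle $g=\psi^{-1}F(\psi)$ is the element $a\in\mathcal E$ that the paper defines by $F(\psi)=\psi\compos a$ (the paper calls its fixed isomorphism $\varphi$), your condition $\bar g=1$ is exactly the paper's claim that the component of $\mathfrak I$ containing $\psi$ is $F$-stable, and your $\psi h^{-1}$ is the paper's $\psi\compos c^{-1}$, where $c^{-1}F(c)=a$ comes from Lang's theorem on the identity component.

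The step you flagged closes at once, with no semidirect-product quotient needed. Your permutations $\tau,\varphi$ lie in $\mathrm{Sym}(\pi)$, whose order $c(A)!$ is prime to $m$, so $\tau^m=\varphi^m$ forces $\tau=\varphi$, and evaluating at $1\in\pi$ gives $\bar g=1$. The paper reaches the same conclusion by noting that the $F$-orbit of that component has length dividing $m$ and at most $c(A)$. Also, you never need $h$ to be $\FF_{q^m}$-rational: $F(\psi h^{-1})=\psi h^{-1}$ already forces all matrix entries into $\FF_q$.
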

\begin{proof}
Write $\FF_q=\kk$ in this proof, and suppose that $A_{\FF_{q^m}}\cong B_{\FF_{q^m}}$.
Then $\dim_{\kk}A=\dim_{\kk}B=n$. Let $\{e_1,\ldots,e_n\}$ and $\{f_1,\ldots,f_n\}$ be two bases of $A$ and $B$ over $\kk$, respectively. Then we have
\[ e_ie_j=\sum_{\ell=1}^n c_{ij}^{\ell}e_\ell, \quad f_af_b=\sum_{r=1}^n d_{ab}^{r}f_r, \]
where all coefficients $c_{ij}^{\ell}$ and $d_{ab}^{r}$ lie in $\kk$.
Write $1_A=\sum\limits_{i=1}^n u_i e_i$ and $1_B=\sum\limits_{r=1}^n v_r f_r$,
where $u_i,v_r\in\kk$.
Consider the set
$\mathfrak I = \operatorname{Iso}_{\overline{\kk}\text{-alg}}(A_{\overline{\kk}},B_{\overline{\kk}})$
of all $\overline{\kk}$-algebra isomorphisms from $A_{\overline{\kk}}$ to $B_{\overline{\kk}}$.
Under the bases of $A$ and $B$ over $\kk$, we regard $\mathfrak I$ as the set of invertible matrices
(preserving multiplication and the identity), with the Zariski topology.
Then, for each $\overline{\kk}$-linear map $\varphi:A_{\overline{\kk}}\to B_{\overline{\kk}}$
with $\varphi(e_i)=\sum\limits_{a=1}^n t_{ai}f_a$, the corresponding polynomial equations
\begin{align}\label{eq:finite field descent 1}
\sum_{\ell=1}^n c_{ij}^{\ell}t_{r\ell} = \sum_{a,b=1}^n t_{ai}t_{bj}d_{ab}^{r}
\end{align}
and
\begin{align}\label{eq:finite field descent 2}
\sum_{i=1}^n u_i t_{ri}=v_r
\end{align}
have coefficients in $\kk$.
Now, assume that $\varphi\in\mathfrak I$ is an isomorphism whose matrix entries belong to $\FF_{q^m}$.
Then for any $a\in \Aut(A_{\overline{\kk}})$, the composition $A_{\overline{\kk}} \xrightarrow{a} A_{\overline{\kk}} \xrightarrow{\varphi} B_{\overline{\kk}}$ induces a map
\begin{align}\label{map:finite field descent}
\Aut(A_{\overline{\kk}}) \to \mathfrak I,\quad a\mapsto\varphi\compos a
\end{align}
which is invertible, and  its inverse sends any $\psi \in \mathfrak I$ to $\varphi^{-1}\compos\psi$.
Since their matrix entries are linear expressions in the entries of the variable matrix,
both maps are continuous in the Zariski topology.
Thus this map provides a bijection between the connected components of $\Aut(A_{\overline{\kk}})$ and the connected components of $\mathfrak I$.
Therefore, by the definition of $c(A)$, $\mathfrak I$ has exactly $c(A)$ connected components.
Let $F:\Mat_n(\overline{\kk})\to\Mat_n(\overline{\kk})$ be defined by $F((t_{ij})_{i,j})=(t_{ij}^{q})_{i,j}$, where $\dim_{\kk}A=\dim_{\kk}B=n$.
Since \eqref{eq:finite field descent 1} and \eqref{eq:finite field descent 2} have coefficients in $\kk=\FF_q$,
and $F$ preserves invertibility, we have that $F$ sends $\mathfrak I$ to itself.
Moreover, $F$ is a homeomorphism in the Zariski topology,
so it permutes the connected components of $\mathfrak I$.
Let $C$ be the component containing $\varphi$.
Since the entries of $\varphi$ belong to $\FF_{q^m}$, we have $F^m(\varphi)=\varphi$, and then we have $F^m(C)=C$.
If the orbit $O_C:=\{F^t(C) : t\in\NN \}$ of $C$ under $F$ has length $d$,
then $d\mid m$ and $1\=< d\=< c(A)$. Thus, $d\mid c(A)!$.
By $\gcd(m,c(A)!)=1$, we have $d=1$, and so $F(C)=C$.
Let $\mathcal E$ be the identity component of $\Aut(A_{\overline{\kk}})$.
Under the map \eqref{map:finite field descent}, the identity automorphism is sent to $\varphi$.
It follows that $\mathcal E$ is mapped onto $C$.
Since $F(\varphi)\in C$, there is an element $a \in\mathcal E$ such that $F(\varphi)=\varphi\compos a$.
The identity component $\mathcal E$ is a connected algebraic group defined over $\kk$.
Lang's theorem \cite{Lang1956} implies that the map
\[
\mathcal E \to \mathcal E, \quad b\mapsto
(b^{-1}\compos F(b):
A_{\overline{\kk}} \xrightarrow{F(b)} A_{\overline{\kk}} \xrightarrow{b^{-1}} A_{\overline{\kk}}
)
\]
is surjective on this group.
Applying this statement to $a$, we can choose $c\in\mathcal E$ with $c^{-1}\compos F(c)=a$.
Take $b=c^{-1}$, then we have $a\compos F(b)=b$. Consequently,
\[ F(\varphi\compos b) = F(\varphi)\compos F(b) = \varphi\compos a\compos F(b) = \varphi\compos b. \]
Every matrix entry of $\varphi\compos b$ is therefore fixed by the $q$-th power map, and hence belongs to $\kk=\FF_q$.
Thus, $\varphi\compos b$ gives an isomorphism $A\cong B$ of $\FF_q$-algebras.
\end{proof}

\subsection{Application to the modular isomorphism problem}

Let $p$ be a prime. For a finite $p$-group $G$, put $c_G=c(B_{\FF_p}(G))$.
By Lemma \ref{lemm:regular convolution subalgebra}, this is also the number of connected components of the algebraic group
$\Aut_{\overline{\FF}_p\text{-alg}} (\overline{\FF}_pG)$.

\begin{corollary}\label{coro:group algebra finite field descent}
Let $G$ and $H$ be finite $p$-groups, and let $m$ be a positive integer satisfying $\gcd(m,c_G!)=1$. Then
\[   \FF_{p^m}G\cong\FF_{p^m}H \text{ implies } \FF_pG\cong\FF_pH.\]
In particular, if $\FF_pG\cong\FF_pH$ implies $G\cong H$ for every finite $p$-group $H$,
then
\[ \FF_{p^m}G\cong\FF_{p^m}H \text{ also implies } G\cong H \]
for every finite $p$-group $H$ whenever $\gcd(m,c_G!)=1$.
\end{corollary}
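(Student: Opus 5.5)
The plan is to reduce the corollary to Proposition \ref{prop:finite field descent}, applied with $q=p$, $A=B_{\FF_p}(G)$ and $B=B_{\FF_p}(H)$. By Lemma \ref{lemm:regular convolution subalgebra}, $\rho_{\FF_p,G}$ gives an $\FF_p$-algebra isomorphism $\FF_pG\cong B_{\FF_p}(G)$, and likewise $\FF_pH\cong B_{\FF_p}(H)$. The same lemma gives natural isomorphisms
\[ \FF_{p^m}\otimes_{\FF_p}B_{\FF_p}(G)\cong B_{\FF_{p^m}}(G)\cong\FF_{p^m}G, \qquad \FF_{p^m}\otimes_{\FF_p}B_{\FF_p}(H)\cong B_{\FF_{p^m}}(H)\cong\FF_{p^m}H. \]
Hence the hypothesis $\FF_{p^m}G\cong\FF_{p^m}H$ translates into $A_{\FF_{p^m}}\cong B_{\FF_{p^m}}$.

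By definition $c_G=c(B_{\FF_p}(G))=c(A)$, so the assumption $\gcd(m,c_G!)=1$ is exactly the hypothesis $\gcd(m,c(A)!)=1$ of Proposition \ref{prop:finite field descent}. That proposition then yields $A\cong B$ as $\FF_p$-algebras. Composing with the isomorphisms $\FF_pG\cong A$ and $B\cong\FF_pH$ gives $\FF_pG\cong\FF_pH$, which is the first assertion.

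The "in particular" part is immediate. Given $\FF_{p^m}G\cong\FF_{p^m}H$ with $\gcd(m,c_G!)=1$, the first part gives $\FF_pG\cong\FF_pH$, and the assumed rigidity of $G$ over $\FF_p$ then gives $G\cong H$.

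The one point requiring care is that Proposition \ref{prop:finite field descent} depends only on the component count of $\Aut(A_{\overline{\FF}_p})$ for the algebra $A$, the one attached to $G$. The proposition is stated asymmetrically, so no condition on $H$ is needed and nothing further has to be checked. I also want to make sure the scalar-extension isomorphisms are algebra isomorphisms, not merely linear ones. This holds because the basis $\{\rho(g)\}$ multiplies as $G$ over any field.
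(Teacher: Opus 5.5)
Your proposal is correct and follows the paper's proof. Both transport the hypothesis to $B_{\FF_p}(G)$ and $B_{\FF_p}(H)$ via Lemma \ref{lemm:regular convolution subalgebra}, apply Proposition \ref{prop:finite field descent} with $A=B_{\FF_p}(G)$ (so that $c(A)=c_G$), and then transport back to get $\FF_pG\cong\FF_pH$; the ``in particular'' clause then follows in the same immediate way.
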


\begin{proof}
By Lemma \ref{lemm:regular convolution subalgebra},
the isomorphism $\FF_{p^m}G\cong\FF_{p^m}H$ implies
\[ \FF_{p^m}\otimes_{\FF_p}B_{\FF_p}(G) \cong \FF_{p^m}\otimes_{\FF_p}B_{\FF_p}(H).\]
Proposition \ref{prop:finite field descent} yields $B_{\FF_p}(G)\cong B_{\FF_p}(H)$.
Applying Lemma \ref{lemm:regular convolution subalgebra} again, we obtain $\FF_pG\cong\FF_pH$.
The last assertion follows immediately.
\end{proof}

For a finite $p$-group $G$, let
\[ I_G :=\bigg\{\sum_{g\in G}a_gg\in\FF_pG: \sum_{g\in G}a_g=0\bigg\}
  =\operatorname{span}_{\FF_p}\{g-1:g\in G\}, \]
be the \defines{augmentation ideal} of $\FF_pG$,
namely the kernel of the map $\sum\limits_{g\in G}a_gg\mapsto\sum\limits_{g\in G}a_g$, see \cite{Passman1977} for details.
The third dimension subgroup is
\[ D_3(G) := \{g\in G:g-1\in I_G^3\}. \]
Passi and Sehgal proved that a finite $p$-group with $D_3(G)=1$ is determined by its group algebra over $\FF_p$ \cite{PSl1972}.
Whether the same conclusion holds over every field of characteristic $p$ is asked in \cite[Question 2.10]{MS2025}. Corollary \ref{coro:group algebra finite field descent} gives the following consequence.

\begin{theorem}\label{coro:third dimension finite extensions}
Let $G$ be a finite $p$-group with $D_3(G)=1$.
Let $m$ be a positive integer such that $\gcd(m,c_G!)=1$.
Then, for every finite $p$-group $H$,
\[  \FF_{p^m}G\cong\FF_{p^m}H \text{~implies~} G\cong H. \]
In particular, this holds whenever $m$ is a prime greater than $c_G$.
\end{theorem}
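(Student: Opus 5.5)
The argument combines Corollary \ref{coro:group algebra finite field descent} with the theorem of Passi and Sehgal \cite{PSl1972}. Fix a finite $p$-group $G$ with $D_3(G)=1$, a positive integer $m$ with $\gcd(m,c_G!)=1$, and a finite $p$-group $H$ with $\FF_{p^m}G\cong\FF_{p^m}H$.

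The first step is descent to the prime field. The isomorphism over $\FF_{p^m}$ transports through Lemma \ref{lemm:regular convolution subalgebra} to an isomorphism $\FF_{p^m}\otimes_{\FF_p}B_{\FF_p}(G)\cong\FF_{p^m}\otimes_{\FF_p}B_{\FF_p}(H)$. Since $c_G=c(B_{\FF_p}(G))$ and $\gcd(m,c_G!)=1$, Proposition \ref{prop:finite field descent} applies with $A=B_{\FF_p}(G)$, $B=B_{\FF_p}(H)$ and $q=p$. It gives $B_{\FF_p}(G)\cong B_{\FF_p}(H)$, and hence $\FF_pG\cong\FF_pH$. This is exactly the first assertion of Corollary \ref{coro:group algebra finite field descent}, so I will cite that corollary directly.

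The second step uses the result of Passi--Sehgal recalled in the introduction: $\FF_pG$ determines the isomorphism type of $D_n(G)/D_{n+2}(G)$ for every $n\geq 1$. Take $n=1$. Since $D_1(G)=G$, the algebra $\FF_pG$ determines $G/D_3(G)$, which equals $G$ by hypothesis. The isomorphism $\FF_pG\cong\FF_pH$ then gives $G/D_3(G)\cong H/D_3(H)$, so $G\cong H/D_3(H)$.

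The third step, which I expect to be the main subtle point, is to conclude $D_3(H)=1$. The plan is a comparison of orders. An isomorphism of group algebras preserves dimension, so $|G|=|H|$. Then $|H|=|G|=|H/D_3(H)|$, which forces $D_3(H)=1$ and therefore $G\cong H$. This shows that the hypothesis ``$\FF_pG\cong\FF_pH$ implies $G\cong H$ for every finite $p$-group $H$'' of the second part of Corollary \ref{coro:group algebra finite field descent} holds, and the corollary then yields the main implication.

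Finally, for the ``in particular'' clause, suppose $m$ is a prime with $m>c_G$. Then $m$ divides none of $1,\dots,c_G$, so $m\nmid c_G!$. Since $m$ is prime, this gives $\gcd(m,c_G!)=1$, and the first part applies.
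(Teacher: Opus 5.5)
Your proposal is correct and follows the paper's proof: you descend to $\FF_pG\cong\FF_pH$ via Corollary \ref{coro:group algebra finite field descent} and then invoke Passi--Sehgal. Your one addition is to spell out a step the paper leaves implicit, namely that $G/D_3(G)\cong H/D_3(H)$ together with $|G|=|H|$ forces $D_3(H)=1$; this deduction is correct.
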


\begin{proof}
By Corollary \ref{coro:group algebra finite field descent},
we have $\FF_pG\cong\FF_pH$.
Since $D_3(G)=1$, the result of Passi and Sehgal \cite{PSl1972} gives $G\cong H$.
If $m$ is a prime greater than $c_G$, then $\gcd(m,c_G!)=1$.
\end{proof}

The descent argument in Proposition \ref{prop:finite field descent} applies to arbitrary finite-dimensional algebras over finite fields.
Here, regular Kleisli convolution representations give a concrete realization of the algebras to which it is applied.
The argument does not yet give an explicit bound for $c_G$.
To obtain a more explicit range of coefficient fields,
one needs to determine or bound the number of connected components of the automorphism algebraic group of $B_{\FF_p}(G)$.
Thus Theorem \ref{coro:third dimension finite extensions} does not settle the corresponding question over all fields.

%

\paragraph{Authors' Contributions}



\paragraph{Competing Interests}

The author declare that they have no conflicts of interest as defined by the journal, nor any other interests that could be perceived as influencing the results presented in this paper.

\paragraph{Data availability}
Data sharing not applicable to this article as no datasets were generated or analysed during the current study

\paragraph{Ethical Approval}

This article does not require ethical approval.

\paragraph{Fundings}
Yu-Zhe Liu is supported by
the National Natural Science Foundation of China (Grant Nos. 12401042 and 12561008),
the Guizhou Provincial Key Laboratory of Applied Mathematics and Computing Power \& Algorithms (No. Qiankehe Platform ZSYS[2025]039),
the Science and Technology Foundation of the Guizhou S~\&~T Department (Grant Nos. VZD[2026]001, ZD[2025]085 and ZK[2024]YiBan066),
and Scientific Research Foundation of Guizhou University (Grant No. [2023]16).



\newcommand{\etalchar}[1]{$^{#1}$}

\end{document}